\renewcommand{\paragraph}{\@startsection{paragraph}{4}{0ex}%
   {-3.25ex plus -1ex minus -0.2ex}%
   {1.5ex plus 0.2ex}%
   {\normalfont\normalsize\bfseries}}
\def\blfootnote{\xdef\@thefnmark{}\@footnotetext}
\providecommand{\U}[1]{\protect\rule{.1in}{.1in}}
\newenvironment{proof}[1][Proof]
{\noindent\textbf{#1:} }{\hfill$\Box$}                    
\newcommand{{\resizebox{}{!}{\input .pstex_t}}}[2]{{\resizebox{#1}{!}{\input #2.pstex_t}}}
\newtheorem{teo}{Theorem}[section]
\newtheorem{defi}{Definition}[section]
\newtheorem{prop}{Proposition}[section]
\newtheorem{lema}{Lemma}[section]
\newcommand{\R}{\mathbb{R}}
\newcommand{\NN}{\mathbb{N}}
\newcommand{\dis}{\displaystyle}
\newcommand{\Oo}{\mathcal{O}}
\newcommand{\te}{\theta}
\newcommand{\D}{\Delta}
\newcommand{\ue}{u^{\varepsilon}}
\newtheorem{theorem}{Theorem}[section]
\newtheorem{remark}[theorem]{Remark}
\title{\textbf{Small-time global exact controllability to the trajectories for the viscous Boussinesq system}}
\author{
	\textsc{F. W. Chaves-Silva}\thanks{Department of Mathematics, Federal University of Para\'iba, UFPB,  CEP 58050-085, Jo\~ao Pessoa-PB, Brazil. E-mail: {\tt fchaves@mat.ufpb.br}.}
	\and
	\textsc{E. Fern\' andez-Cara}\thanks{University of Sevilla, Dpto. E.D.A.N, Aptdo 1160, 41080 Sevilla, Spain. E-mail: {\tt cara@us.es}.}
	\and
	\textsc{K. Le Balc'h}\thanks{Institut de Mathématiques de Bordeaux, Bureau 207, 351 Cours de la Libration, 33400 Bordeaux, France. E-mail: {\tt kevin.le-balch@math.u-bordeaux.fr}.}
	\and
	\textsc{J. L. F. Machado}\thanks{Department of Mathematics, Federal University of Pernambuco, UFPE, CEP 50740-545, Recife,
PE, Brazil. E-mail: {\tt  lucasmachado@dmat.ufpe.br}. Partially supported by CNPq (Brazil).}
\thanks{Federal Institute of Cear\'a, IFCE, CEP 62320-000, Tiangu\'a, CE, Brazil.E-mail: {\tt  lucas.machado@ifce.edu.br}.}\quad
	\and
	\textsc{D. A. Souza}\thanks{Department of Mathematics, Federal University of Pernambuco, UFPE, CEP 50740-545, Recife,
PE, Brazil. E-mail: {\tt diego.souza@dmat.ufpe.br}.}
}
\date{}
\begin{document}
\maketitle


\begin{abstract}
	In this paper, we deal with the global exact controllability to the trajectories of the Boussinesq system.  We consider $2$D and $3$D smooth bounded domains. 
	The velocity field of the fluid  must satisfy a Navier slip-with-friction boundary condition and a Robin boundary condition is imposed to the temperature. We assume that one can act on the velocity and the temperature on an arbitrary small part of the boundary. The proof relies on three main arguments. First, we transform the problem into a distributed controllability problem by using a domain extension procedure. Then, we prove a global approximate controllability result by following the strategy of Coron, Marbach, Sueur in \cite{coron_marbach}, which deals with the Navier-Stokes equations. This part relies on the controllability of the inviscid Boussinesq system and asymptotic boundary layer expansions. Finally, we conclude with a local controllability result that we establish with the help of a linearization argument and appropriate Carleman estimates.
\end{abstract}

\

\noindent {\bf Keywords:}  Boussinesq system, controllability, boundary layers, global carleman inequalities
\vskip 0.25cm\par\noindent
\noindent {\bf Mathematics Subject Classification:} 35Q35,76D55, 93B05, 93C10

\tableofcontents

    
\section{Introduction}

	Let $\Omega\subset\R^n$ ($n = 2, 3$) be a smooth bounded domain with $\Gamma := \partial\Omega$ and let $\Gamma_c \subset \Gamma$ be a non-empty open subset 
	 which  intersects all connected components of $\Gamma$. It will be said that $\Gamma_c$ is the control boundary. 
	Let us set
	\begin{equation*}
	   L^2_c(\Omega)^n := \{ u \in L^2(\Omega)^n\ :\ \nabla \cdot u = 0 \ \text{in}\ \Omega,\ u\cdot \nu = 0\ \text{on}\ \Gamma\setminus\Gamma_c\},
	\end{equation*}
	where $\nu=\nu(x)$ is the outward unit normal vector to $\Omega$ at the points $x \in \Gamma$.
	For a given  vector field $f$, we denote by $[f]_{tan}$ the tangential part of $f$, $D(f)$ the deformation tensor and $N(f)$ the tangential Navier boundary operator, respectively given by the following formula:
	\begin{equation}\label{def_N}
	\begin{alignedat}{2}
		[f]_{tan}&:=f-(f\cdot \nu)\nu,\\
		D(f)&:=\dfrac{1}{2}\left(\nabla f+\nabla f^t\right),\\
		N(f)&:=[D(f)\nu+Mf]_{tan},
	\end{alignedat}
	\end{equation}
	where $M = M(t,x)$ is a smooth, symmetric matrix-valued function related to the rugosity of the boundary, called the {\it friction matrix}. We also set 
	\begin{equation}\label{def_R}
	R(\theta):=\dfrac{\partial \theta}{\partial \nu}+m\theta,
	\end{equation}
	where $m=m(t,x)$ is a smooth function again related  to the properties of the boundary, known as the
	{\it heat transfer coefficient}.

Let $T>0$ be a final time. We will consider the Boussinesq system
\begin{equation}\label{eq_bou}
\left\{
    \begin{array}{lll}
    		\partial_t u -\Delta u+(u\cdot\nabla)u+\nabla p= \theta e_n & \hbox{in} & (0,T) \times \Omega,\\
    		  \noalign{\smallskip}\dis
    		\partial_t\te-\D \te+u\cdot \nabla \te=0& \hbox{in} & (0,T) \times \Omega,\\
    		\noalign{\smallskip}\dis
    		\nabla\cdot u=0& \hbox{in} &(0,T) \times \Omega,\\
    		u\cdot \nu=0, \quad N(u)=0& \hbox{on} & (0,T) \times \left(\Gamma\setminus\Gamma_c\right),\\
   		\noalign{\smallskip}\dis
    		R(\te)=0 &\hbox{on} &(0,T) \times \left( \Gamma\setminus\Gamma_c\right), \\
    			u(0,\cdot\,)=u_0, \quad \te(0,\cdot\,)=\te_0 &\hbox{in} & \Omega,
    \end{array}
    \right.
\end{equation}
where the functions $u=u(t,x)$, $\te = \te(t,x)$ and $p=p(t,x)$ must be respectively viewed as the velocity field, the temperature and the pressure  of the fluid 
	and  $e_n$ is the $n$-th vector of the canonical basis of $\R^n$, i.e., $e_n =  (0,1)$ if $n=2$ and $e_n=(0,0,1)$ if $n=3$.\\
	\indent In the controlled system \eqref{eq_bou}, at any time $t \in [0,T]$, $(u, \theta)(t,\cdot\,): \Omega \rightarrow \R^n \times \R$ will be interpreted as the \textit{state} of the system and its restriction $(u, \theta)(t, \cdot\,): \Gamma_c \rightarrow \R^n \times\R$ will be regarded as the associated \textit{control}.

\subsection{Main result}	
In this section, we state the main result of the paper, which concerns small-time global boundary exact controllability to the trajectories of \eqref{eq_bou}.\\
\indent Let us introduce the following notation:
\begin{equation}
    \label{eq:defspaces}
    W_T (\Omega):= [C^0_w([0,T];L^2_c(\Omega)^n)\cap L^2(0,T;H^1(\Omega)^n)]\times C_w^0([0,T];L^2(\Omega))\cap L^2(0,T;H^1(\Omega))].
\end{equation}
\indent We have the following result:
\begin{teo}\label{theo_main}
	Let $T>0$ be a positive time, let $(u_0,\te_0) \in L^2_c(\Omega)^n\times  L^2(\Omega)$ be an initial state and let
	$(\overline{u}, \overline{\theta}) \in W_T(\Omega)$ be a weak trajectory
	of \eqref{eq_bou}. Then, there exists 
	a weak controlled solution   to \eqref{eq_bou} in $W_T(\Omega)$ that satisfies
	\begin{equation}  \label{eq:fin_cond}
		\left( u,\theta\right)(T,\cdot\,) = \left( \overline{u},\overline{\theta}\right)(T,\cdot\,).
	\end{equation}
\end{teo}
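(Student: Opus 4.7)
Following the three-step scheme announced in the abstract, I would reduce the boundary control problem to a distributed control problem on a larger domain, perform a small-time global approximate controllability via a vanishing-viscosity analysis, and then close the argument with a local exact controllability to the trajectory on a short final time interval.

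\textbf{Step 1 (Domain extension).} Embed $\Omega$ in a smooth bounded domain $\widetilde{\Omega}\supset \Omega$ so that $\Gamma_c$ is contained in the interior of $\widetilde{\Omega}$ and only $\Gamma\setminus\Gamma_c$ remains on $\partial\widetilde{\Omega}$. Extend the friction matrix $M$ and the heat transfer coefficient $m$ smoothly to $\partial\widetilde{\Omega}$, and extend the target trajectory $(\overline{u},\overline{\theta})$ to $\widetilde{\Omega}$ as a solution of the Boussinesq system with a smooth internal forcing supported in $\widetilde{\Omega}\setminus\overline{\Omega}$. The problem is then recast as: control $(u,\theta)$ on $\widetilde{\Omega}$ with Navier and Robin boundary conditions on the whole $\partial\widetilde{\Omega}$, using forces supported in $\widetilde{\Omega}\setminus\overline{\Omega}$, and reach $(\overline{u},\overline{\theta})(T,\cdot\,)$ on $\Omega$. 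Any such solution, restricted to $\Omega$, yields the sought controlled trajectory with boundary controls defined by its traces on $(0,T)\times \Gamma_c$.

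\textbf{Step 2 (Small-time global approximate controllability).} On $[0,T-\tau]$ with $\tau>0$ small, the goal is to drive $(u,\theta)$ arbitrarily close to $(\overline{u},\overline{\theta})(T-\tau,\cdot\,)$ in $L^2$. I would adapt the Coron--Marbach--Sueur scheme from the Navier--Stokes setting to the Boussinesq coupling: rescale time by $t=\varepsilon s$ to obtain a system with small viscosity $\varepsilon$, build a well-chosen inviscid profile $(\overline{u}^{0},\overline{\theta}^{\,0})$ solving the inviscid Boussinesq system (Euler plus pure transport of temperature) by Coron's return method, so that the flushing of the flow carries the initial data out of $\Omega$ through $\Gamma_c$. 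Then construct an asymptotic expansion of the form $u^{\varepsilon} = \overline{u}^{0}+\sqrt{\varepsilon}\,v+\cdots$ and $\theta^{\varepsilon}=\overline{\theta}^{\,0}+\sqrt{\varepsilon}\,\vartheta+\cdots$, with Prandtl-type boundary layers localized near $\Gamma\setminus\Gamma_c$ compensating the mismatch of the inviscid profile with the Navier and Robin conditions. The slip-with-friction condition for $u$ and the Robin condition for $\theta$ both generate linear, well-posed boundary-layer equations (no Prandtl blow-up), and it is exactly the dissipation inside these layers that yields vanishing of the remainder in $L^2$ as $\varepsilon\to 0$. An energy estimate for the remainder, carefully handling the buoyancy term $\theta e_n$ and the nonlinear transport, yields the desired approximate controllability.

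\textbf{Step 3 (Local exact controllability) and conclusion.} On $[T-\tau,T]$, linearize \eqref{eq_bou} around $(\overline{u},\overline{\theta})$ in $\widetilde{\Omega}$, and prove null controllability of the linearized system with controls supported in $\widetilde{\Omega}\setminus\overline{\Omega}$, via a global Carleman estimate for the corresponding adjoint (coupled Stokes-like equation for the velocity and a heat equation for the temperature, with Navier and Robin conditions which are natural for Carleman weights of the form $e^{-s\alpha}$). Coupling terms are absorbed by standard bootstrapping between the two Carleman estimates, exploiting that the adjoint temperature forces the velocity equation only through a zero-order term and vice versa. A source-term method, in the spirit of Imanuvilov--Puel or Liu--Takahashi--Tucsnak, followed by a fixed-point/inverse-mapping argument gives local exact controllability to $(\overline{u},\overline{\theta})$ from any initial state sufficiently close to $(\overline{u},\overline{\theta})(T-\tau,\cdot\,)$. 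Concatenating with Step~2, for $\tau$ and the approximation threshold chosen small enough, produces the desired controlled solution.

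\textbf{Main obstacle.} The decisive difficulty lies in Step~2: building the coupled boundary-layer expansion for the Boussinesq system and estimating the remainder uniformly in $\varepsilon$. The temperature equation adds a parabolic boundary layer with Robin condition on top of the slip-with-friction layer for the velocity, and the buoyancy term couples them throughout; the energy estimate for the remainder must simultaneously control the transport by the inviscid flow, the nonlinear self-interaction of the layers, and the cross-coupling $\theta e_n$, while keeping the $L^2$-norm of the final state small. Getting the right scalings and compatible profiles so that all these contributions close at order $\sqrt{\varepsilon}$ is the technical heart of the argument.
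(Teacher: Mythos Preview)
Your outline follows the same architecture as the paper (domain extension, return-method approximate controllability, local exact controllability via Carleman), but two concrete ingredients are missing and a third is misdescribed.

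First, there is a norm mismatch you do not address. The approximate controllability step only yields $L^2$-closeness to the target, whereas the local exact controllability result (Proposition~\ref{local_non_lin}) requires $H^3$-closeness of the initial data. The paper bridges this by inserting a \emph{regularization interval} with zero control and invoking the smoothing effect of the free Boussinesq system (Lemma~\ref{lemma9}) for the difference $(u-\overline{u},\theta-\overline{\theta})$; this is done twice, in fact---once at the very start (since the approximate controllability proposition itself assumes $H^3$ data) and once between the approximate and local phases. Your two-interval split $[0,T-\tau]\cup[T-\tau,T]$ omits both regularization steps and would not close.

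Second, your boundary-layer discussion misses the decisive point. No temperature layer is introduced: the inviscid temperature profile is simply $\theta^0\equiv 0$, which trivially satisfies the Robin condition, and the expansion reads $\theta^\varepsilon=\varepsilon^2\theta^1+\varepsilon^2 q^\varepsilon$ with no $\sqrt{\varepsilon}$ corrector (indeed Remark~\ref{remark:dirichlet} explains that Robin, unlike Dirichlet, lets the $q^\varepsilon$ boundary term be estimated directly). What \emph{is} crucial---and absent from your sketch---is the \emph{well-prepared dissipation method} for the velocity layer $\rho$. The natural heat-equation decay of $\rho$ after time $T$ gives only $\sqrt{\varepsilon}\,\rho(T/\varepsilon,\cdot)=O(\varepsilon)$, hence $u(T,\cdot)=O(1)$ after rescaling, which is useless. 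One must design the layer control $v^\rho$ on $[0,T]$ so that enough vanishing moments are enforced on $\rho(T,\cdot,\cdot)$ to get the polynomial decay of Lemma~\ref{lemma_limita_boundary_layer}; only then does the remainder estimate close (at $O(\varepsilon^{1/8})$, not $O(\sqrt{\varepsilon})$). This is the technical heart of the paper, and your ``main obstacle'' paragraph does not identify it.
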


Several comments are in order.
\begin{remark}
  \rm  For the precise notions of weak trajectory and weak controlled solution,  see Definition \ref{def_traj_contr} below. Essentially, we require to belong to $W_T(\Omega)$ and satisfy the PDEs in \eqref{eq_bou} in the weak (distributional) sense.
\end{remark}

      \begin{remark}
	  \rm   In Theorem \ref{theo_main}, we do not indicate explicitly which  are the controls. Indeed, once the controlled solution is constructed, we see that the associated controls are the appropriate traces of the solution on $(0,T)\times\Gamma_c$. 
	 \end{remark}

\begin{remark}
  \rm  Theorem \ref{theo_main} is stated as an existence result. The lack of uniqueness comes from two main reasons:
    \begin{itemize}
        \item If we do not specify any restriction, there exist many controls that drive the solution to \eqref{eq_bou} to the desired trajectory.
        \item Even if we select a criterion in order to fix the control without ambiguity, it is not known if weak solutions are unique in the $3$-D case (in $2$-D, it is known that weak solutions are unique; see \cite{boyer,LERAY2} for the Navier-Stokes case).
    \end{itemize}
\end{remark}


\subsection{Bibliographical comments}

We now present some existing results in the literature which are related to Theorem \ref{theo_main}.
	
	There are several papers where the controllability properties of the Boussinesq equations are investigated. Most of them are local results covering boundary conditions of various kinds. For instance, in \cite{Fursikov_Imanuvilov_Local_Bou}, the local exact boundary controllability to the trajectories was obtained with boundary controls acting over the whole boundary; in \cite{Fursikov_Imanuvilov_Exac_Bou}, the exact controllability with distributed controls and periodic boundary conditions was analyzed; in \cite{guerrero_bou_D}, the author proved the local exact controllability to the trajectories with Dirichlet boundary conditions; 
	this situation is also handled with a reduced number of controls in \cite{carreno, Cara_NS_BOU,Cara_NS_BOU6, Burgos}. For incompressible ideal fluids, this subject has
	been investigated by Coron~\cite{Coron2,Coron3} and~Glass~\cite{Glass1,Glass2,glass_3D} and also by Fern\'andez-Cara {\it et al.} \cite{bouss_cara} when heat effect are considered.
	
	On the other hand, the literature on the Navier-Stokes and Boussinesq equations with Navier-slip boundary conditions is scarce. Let us recall some controllability results obtained for the Navier-Stokes system: in \cite{Coron_2D}, a small-time global result for the 2D  equations has been proved where the exact controllability can be achieved in the interior of the domain and the information about the solution near the boundaries is unknown; the residual boundary layers are too strong to be handled satisfactorily during the control design strategy. 
	Guerrero proved in \cite{guerrero_CEL_NS} the local exact controllability to the trajectories with general nonlinear Navier boundary conditions.
	Finally, the small-time global exact controllability with Navier slip-with-friction boundary conditions towards weak trajectories  was proved in  \cite{coron_marbach}  by  Coron,  Marbach and Sueur. 
	 This article answers the famous open question by J.-L. Lions concerning global null-controllability of the Navier-Stokes equations with boundary conditions of this kind. 
	In what concerns the Boussinesq system with Navier-slip boundary conditions, see \cite{Bou_N_1998, kim} for some local results.
	
	\subsection{Strategy of the proof}
	
	We present in this section the main of ideas and results needed for the proof of Theorem \ref{theo_main}. 
	
	\begin{itemize}
	\item In Section \ref{sect_domain}, we will reduce the task to a \textit{distributed controllability} problem by applying a classical domain extension technique. Then, we will limit our considerations to \textit{smooth initial data} by using the smoothing effect of the uncontrolled Boussinesq system.
	\end{itemize}

	\begin{itemize}
	\item In Section \ref{sect_app_cont}, starting from a sufficiently smooth initial data, we prove a \textit{global approximate controllability} result. In order to do this, we follow the strategy performed by Coron, Marbach and Sueur in \cite{coron_marbach} in the Navier-Stokes case. 
	\end{itemize}

	\begin{itemize}
	\item In Section \ref{sec_local}, we prove a \textit{local controllability result} by using Carleman inequalities for the adjoint of the linearized system and a fixed-point strategy.
	\end{itemize}
	
	\begin{itemize}
	\item In Section \ref{control_trajec}, we combine all  these  arguments and achieve the proof.
	\end{itemize}

	In general, the notation will be abridged. For instance, if $u\in H^2(\Omega)^n$ and $\te\in H^1(\Omega)$,  $\|(u,\te)\|_{H^2\times H^1}$ will stand for the norm of $(u,\theta)$ in the space $H^2(\Omega)^n\times H^1(\Omega)$. The scalar product and norm in $L^2$ spaces will be denoted  by $(\,\cdot\,,\,\cdot\,)$ and $\|\,\cdot\,\|$, respectively. The symbol $C$ will stand for a generic positive constant.
	



\section{Domain extension and smoothing effect}\label{sect_domain}
\subsection{Domain extension}
\label{sec:domainextension}

	We consider an extended bounded domain $\mathcal{O}$ in such a way that $\Gamma_c \subset \mathcal{O}$ 
	and $\Gamma\setminus\Gamma_c\subset \Gamma_{\Oo} := \partial\mathcal{O}$.  In the sequel, we  will denote  by $\tilde{\nu}=\tilde{\nu}(x)$ the outward unit normal vector to $\Oo$ at the points $x\in\partial\Oo$. We will assume that $M$ and $m$ are extended to $[0,T]\times \partial\Oo$ as smooth functions such that $M$ is symmetric on $ (0,T)\times\partial\Oo$. This allows to speak of $N(u)$ and $R(\theta)$ on $(0,T)\times \partial \Oo$.
	
%
%
	 We will also need the space
	 \begin{equation*} 
	 L_{div}^2(\Oo)^n := \{ u \in L^2(\Oo)^n\ :\ \nabla \cdot u = 0\ \text{in}\ \Oo, \ \ u \cdot \nu = 0\ \text{on}\ \partial\Oo\}.
	 \end{equation*}
	 The following proposition enables us to extend initial conditions to the whole domain $\mathcal{O}$.
	 
	 \begin{prop}\label{prop_extension}
	     Let $(u_0, \theta_0) \in L_c^2(\Omega)^n \times L^2(\Omega)$ be given. There exist $(u_*, \theta_*) \in L^2(\mathcal{O})^{n+1}$ and $\sigma_*\in C_c^\infty( \Oo\setminus \overline{\Omega})$ such that
	     \begin{align}
	     \label{eq:extension}
	         &u_* = u_0\quad\hbox{and}\quad \theta_* = \theta_0\ \text{in}\ \Omega,\qquad \nabla\cdot u_*= \sigma_*\ \text{in}\  \mathcal{O},\qquad u_*\cdot \tilde\nu=0\ \text{on}\  \partial\mathcal{O}.
	     \end{align}
	     Furthermore, $(u_*,\te_*)$ and $\sigma_*$ can be chosen depending continuously on $(u_0,\te_0)$ in the following sense: 
	     	     \begin{align}
	     \label{eq:extension2}
	        \|u_*\|+\|\sigma_*\|\leq C\|u_0\|,\qquad    \|\te_*\|\leq C\|\te_0\|        .  
	        	     \end{align}
	 \end{prop}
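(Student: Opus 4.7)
My plan is to take $\theta_* := \theta_0 \mathbf{1}_\Omega$ (trivial zero-extension) and $\sigma_* \equiv 0$, which is admissible since $0 \in C_c^\infty(\mathcal{O}\setminus\overline\Omega)$, and to construct $u_*$ as a harmonic-gradient extension of $u_0$ across $\Gamma_c$. With this choice, the estimate on $\theta_*$ is immediate, and everything reduces to producing a divergence-free extension of $u_0$ from $\Omega$ to $\mathcal{O}$ with the prescribed normal-trace behavior on $\partial\mathcal{O}$.

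Set $\mathcal{O}' := \mathcal{O}\setminus\overline\Omega$. By the standing hypothesis ($\Gamma_c\subset\mathcal{O}$ and $\Gamma\setminus\Gamma_c\subset\partial\mathcal{O}$), the boundary of $\mathcal{O}'$ decomposes into an inner piece $\overline{\Gamma_c}$ (common with $\partial\Omega$) and an outer piece $\Gamma_e \subset \partial\mathcal{O}$ disjoint from $\overline\Omega$. Because $u_0 \in L^2_c(\Omega)^n$ is divergence-free, its normal trace $g := u_0\cdot\nu$ is well-defined in $H^{-1/2}(\partial\Omega)$ with $\|g\|_{H^{-1/2}(\partial\Omega)}\leq C\|u_0\|$; the defining property of $L^2_c$ says $g$ vanishes on $\Gamma\setminus\Gamma_c$, so $g$ is supported in $\overline{\Gamma_c}$, and moreover $\langle g,1\rangle_{\partial\Omega} = \int_\Omega \nabla\cdot u_0 = 0$.

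The main step is to solve on $\mathcal{O}'$ the Neumann problem
\begin{equation*}
-\Delta \phi = 0 \text{ in } \mathcal{O}', \qquad \partial_{\nu_{\mathcal{O}'}}\phi = -g \text{ on } \overline{\Gamma_c}, \qquad \partial_{\nu_{\mathcal{O}'}}\phi = 0 \text{ on } \Gamma_e,
\end{equation*}
where $\nu_{\mathcal{O}'}$ denotes the outward unit normal to $\mathcal{O}'$ (so that $\nu_{\mathcal{O}'} = -\nu$ on $\Gamma_c$). The Fredholm compatibility condition reduces exactly to the zero-flux identity $\int_{\partial\Omega}g = 0$, hence a Lax--Milgram argument on $H^1(\mathcal{O}')/\mathbb{R}$ yields a solution $\phi$ with $\|\nabla\phi\|_{L^2(\mathcal{O}')} \leq C\|g\|_{H^{-1/2}(\partial\Omega)} \leq C\|u_0\|$. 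Then I would set $u_* := u_0\,\mathbf{1}_\Omega + \nabla\phi\,\mathbf{1}_{\mathcal{O}'}$.

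To conclude I would verify \eqref{eq:extension}--\eqref{eq:extension2}. On each of $\Omega$ and $\mathcal{O}'$ separately $\nabla\cdot u_* = 0$; the normal traces of $u_0$ from $\Omega$ and of $\nabla\phi$ from $\mathcal{O}'$ coincide on $\Gamma_c$ (both equal $g$) by construction, so no singular distribution appears on $\Gamma_c$ and $\nabla\cdot u_* = 0 = \sigma_*$ in $\mathcal{O}$ in the distributional sense. On $\partial\mathcal{O}$, the Neumann condition gives $u_*\cdot\tilde\nu = 0$ on $\Gamma_e$, while the definition of $L^2_c$ gives $u_*\cdot\tilde\nu = u_0\cdot\nu = 0$ on $\Gamma\setminus\Gamma_c$. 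Estimate \eqref{eq:extension2} follows at once from the Neumann bound together with the trivial identity $\|\theta_*\| = \|\theta_0\|$. The one subtle point to state carefully is the distributional matching of the normal traces across $\Gamma_c$, which is handled by the standard $H^{-1/2}$ normal-trace theorem for $L^2$ vector fields with distributional divergence in $L^2$.
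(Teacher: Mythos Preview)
Your argument has a genuine gap at the Neumann compatibility step. You solve $-\Delta\phi=0$ on $\mathcal{O}'=\mathcal{O}\setminus\overline{\Omega}$ and claim that the Fredholm condition reduces to the single identity $\langle g,1\rangle_{\partial\Omega}=0$. This is correct only when $\mathcal{O}'$ is \emph{connected}. In the setting of the paper, $\Gamma_c$ is required to meet every connected component of $\Gamma$, so whenever $\partial\Omega$ has several components (e.g.\ $\Omega$ an annulus) the extension region $\mathcal{O}'$ splits into pieces $(\mathcal{O}\setminus\overline{\Omega})^i$, one attached to each component of $\Gamma$. On each such piece the Neumann problem needs its own compatibility condition
\[
\langle u_0\cdot\nu,1\rangle_{H^{-1/2}(\Gamma_c^i),H^{1/2}(\Gamma_c^i)}=0,
\]
and there is no reason for these individual fluxes to vanish: the divergence theorem only forces their \emph{sum} to be zero. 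A divergence-free field on an annulus with nontrivial circulation gives an explicit counterexample.

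This is precisely why the paper does \emph{not} take $\sigma_*\equiv 0$. It fixes, on each component, a bump $\sigma_*^i\in C_c^\infty\bigl((\mathcal{O}\setminus\overline{\Omega})^i\bigr)$ with $\int\sigma_*^i=-\langle u_0\cdot\nu,1\rangle_{\Gamma_c^i}$, then solves $-\Delta w^i=-\sigma_*^i$ with the matching Neumann data; the role of $\sigma_*^i$ is exactly to restore the compatibility condition componentwise. Your construction is fine (and coincides with the paper's, with $\sigma_*=0$) in the special case where $\partial\Omega$ is connected, but to cover the general statement you must either allow a nonzero $\sigma_*$ as the paper does, or else argue separately that one can choose the extended domain so that $\mathcal{O}\setminus\overline{\Omega}$ is connected --- which is not assumed here and is typically false when $\Omega$ is multiply connected.
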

	 
	 \begin{proof}
Let $\te_* \in L^2(\mathcal{O})$ be the extension by zero of $\te_0$ to the whole domain $\mathcal{O}$, then we have 
$$
  \|\te_*\|\leq \|\te_0\|.
$$

\indent Next, to find an appropriate extension for $u_0$, we first notice that, since $u_0 \in L_c^2(\Omega)^n$, the normal trace $u_0\cdot \nu$ has a sense in
	$ H^{-1/2}(\partial \Omega)$, see \cite[Chapter IV, Section 3.2]{boyer}. 
	Let  us split 
	$$
		\dis\Gamma_c=\bigcup_{i=1}^{k} \Gamma_c^i,
	$$
	where $\Gamma^i_c$ represent the parts of $\Gamma_c$ in each connected component of $\Gamma$ and we stand for $(\Oo\setminus\overline{\Omega})^i$ its related extension. 
	Also, let $\omega^i\subset\subset(\Oo\setminus\overline{\Omega})^i$ be a non-empty open subset and $\sigma^i_*\in C_c^\infty( \omega^i)$ such that
	\[
	\int_{(\mathcal{O}\setminus\overline{\Omega})^i}\sigma_*^i\,  =-\langle u_0\cdot \nu,1\rangle_{H^{-1/2}(\Gamma_c^i),H^{1/2}(\Gamma_c^i)}.
	\]
	The following non homogeneous elliptic problem admits a unique solution $w^i\in H^1((\mathcal{O}\backslash\overline\Omega)^i)$:
\[
\left\{
    \begin{array}{lll}
    		-\Delta w^i= -\sigma_*^i & \hbox{in} &  (\mathcal{O}\backslash\overline\Omega)^i,\\
    		  \noalign{\smallskip}\dis
    		 \frac{\partial w^i}{\partial \nu}=u_0\cdot \nu& \hbox{on} &\Gamma_c^i,\\
   		\noalign{\smallskip}\dis
    	 	 \frac{\partial w^i}{\partial \widetilde\nu}=0& \hbox{on} & \partial(\mathcal{O}\backslash\overline\Omega)^i \setminus\Gamma_c^i.
    \end{array}
    \right.
\]	
			
Let us set
\[
u_*:=\left\{
    \begin{array}{lll}
    		u_0 & \hbox{in} & \Omega,\\
    		  \noalign{\smallskip}\dis
    		 \nabla w^i & \hbox{in} &(\mathcal{O}\backslash\overline\Omega)^i, \quad \text{for}\quad i=1,\ldots, k.
    \end{array}
    \right.
\]		
	It is then clear that $u_*\in L^2(\mathcal{O})^n$, $\nabla\cdot u_*= \sigma_*$ in $\mathcal{O}$  and $u_*\cdot \widetilde\nu=0$ on $\partial \Oo$. On the other hand, we see that, by construction, \eqref{eq:extension} and \eqref{eq:extension2} are satisfied.	
%
%
	\end{proof}\\
		
    Let us now present the notion of solution used throughout the paper. To this purpose, let us  introduce the following notations $\mathcal{O}_T:=(0,T)\times\mathcal{O}$ and $\varLambda_T:=(0,T)\times\partial\mathcal{O}$. In the sequel, when there is no ambiguity, we will also denote by $\nu$ the outward unit normal to $\Oo$.
\begin{defi}\label{def_traj_contr}
	Let ~$T>0$ and $(u_0,\te_0) \in L^2_c(\Omega)^n\times L^2(\Omega)$ be given. It will be said that $(u,\te)\in W_T(\Omega)$ is a weak controlled trajectory of \eqref{eq_bou} 
	if it is the restriction to $(0,T)\times\Omega$ of a weak Leray solution, still denoted by $(u,\te)$, in the space $W_T(\Oo)$, to the nonlinear system
\begin{equation}\label{def_solu}
	\left\{
    \begin{array}{lll}
    		\partial_t u-\Delta u +(u\cdot\nabla)u+\nabla p= \theta e_n + v  &\hbox{in} & \mathcal{O}_T,\\
    		\noalign{\smallskip}\dis
    		\partial_t \theta-\D \te +u\cdot \nabla \te=w  &\hbox{in} & \mathcal{O}_T,\\
    		\noalign{\smallskip}\dis
    		\nabla\cdot u=\sigma  &\hbox{in} &\mathcal{O}_T,\\
    		\noalign{\smallskip}\dis
    		u\cdot \nu=0, \quad N(u)=0&\hbox{on} & \varLambda_T,\\
    		\noalign{\smallskip}\dis
    		R(\te)=0&\hbox{on} & \varLambda_T,\\
    		\noalign{\smallskip}\dis
    		u(0,\cdot\,)=u_*, \quad \te(0,\cdot\,)=\te_* &\hbox{in} & \mathcal{O},
    \end{array}
    \right.
\end{equation}
	where $v\in H^1(0,T;L^2(\Oo)^n)\cap C^0([0,T]; H^1(\Oo)^n)$ and $w \in H^1(0,T;L^2(\Oo))\cap C^0([0,T]; H^1(\Oo))$ are forcing terms supported in \,$ \overline{\Oo} \setminus \overline{\Omega}$, $\sigma \in C^{\infty}([0,T]\times\mathcal{O})$ is a nonhomogeneous divergence condition also supported in \,$ \overline{\Oo} \setminus \overline{\Omega}$ and $(u_*, \theta_*)$ is an extension of $(u_0, \theta_0)$ furnished by Proposition \ref{prop_extension}, satisfying $\nabla \cdot u_* = \sigma(0,\cdot\,)$.
\end{defi}

	Let us state an existence result of weak solution to \eqref{def_solu},
	whose proof is sketched in Appendix~\ref{appendix_existence}: 
\begin{prop}\label{prop_existence}
	Let us assume that $T>0$, $(u_*,\te_*)\in L^2(\mathcal{O})^n\times L^2(\mathcal{O})$ satisfies $u_*\cdot \nu=0$ on $\partial\mathcal{O}$, $\sigma\in C^\infty([0,T]\times\mathcal{O})$ satisfies $\sigma(0,\,\cdot\,)=\nabla\cdot u_*$, 
	$v \in H^1(0,T;L^2(\Oo)^n)\cap C^0([0,T]; H^1(\Oo)^n)$ and $w \in H^1(0,T;L^2(\Oo))\cap C^0([0,T]; H^1(\Oo))$. Then there exists at least one weak Leray solution $(u,\te)$ to \eqref{def_solu}.
\end{prop}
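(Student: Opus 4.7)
The plan is to follow the classical Leray--Hopf construction, adapted to two non-standard features of \eqref{def_solu}: the nonhomogeneous divergence constraint $\nabla\cdot u=\sigma$ and the Navier slip-with-friction/Robin boundary data. First, I would reduce to a divergence-free problem by lifting $\sigma$. Since $\sigma\in C^{\infty}([0,T]\times\mathcal{O})$ and $\sigma(0,\cdot\,)=\nabla\cdot u_*$, one can construct a smooth $u^{\sigma}$ on $\mathcal{O}_T$ satisfying $\nabla\cdot u^{\sigma}=\sigma$, $u^{\sigma}\cdot\nu=0$ on $\varLambda_T$ and $u^{\sigma}(0,\cdot\,)$ compatible with $u_*$; for instance, set $u^{\sigma}=\nabla\phi$ with $\Delta\phi=\sigma$ in $\mathcal{O}$ and $\partial_{\nu}\phi=0$ on $\partial\mathcal{O}$, the solvability condition $\int_{\mathcal{O}}\sigma=0$ being a consequence of $u_*\cdot\nu=0$. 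Writing $u=\widetilde{u}+u^{\sigma}$, the unknown $\widetilde{u}$ is sought in
\[
V:=\{\varphi\in H^1(\mathcal{O})^n\,:\,\nabla\cdot\varphi=0,\ \varphi\cdot\nu=0\ \text{on}\ \partial\mathcal{O}\},
\]
and solves a Navier--Stokes-type system with a smooth additional right-hand side.

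Next, I would set up a Galerkin scheme in $V$ using the eigenfunctions of the Stokes operator associated with the Navier slip-with-friction boundary conditions, together with the eigenfunctions of the Laplacian with Robin boundary condition $R(\cdot)=0$ for the temperature equation. The weak formulation rests on the two integration-by-parts identities
\[
-\int_{\mathcal{O}}\Delta\widetilde{u}\cdot\widetilde{u}=2\int_{\mathcal{O}}|D(\widetilde{u})|^2+2\int_{\partial\mathcal{O}} M\widetilde{u}\cdot\widetilde{u},\qquad -\int_{\mathcal{O}}\Delta\theta\,\theta=\int_{\mathcal{O}}|\nabla\theta|^2+\int_{\partial\mathcal{O}} m\,\theta^2,
\]
which encode $N(\widetilde{u})=0$ and $R(\theta)=0$ respectively. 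Testing each Galerkin approximation against itself, using Korn's inequality on $V$ to control $\|\widetilde{u}\|_{H^1}$ by $\|D(\widetilde{u})\|+\|\widetilde{u}\|$, and absorbing the sign-indefinite boundary friction terms into the viscous dissipation by Young's inequality together with a trace estimate of the form $\|\widetilde{u}\|_{L^2(\partial\mathcal{O})}^2\le C\|\widetilde{u}\|_{H^1(\mathcal{O})}\|\widetilde{u}\|_{L^2(\mathcal{O})}$, a Gronwall argument yields uniform bounds on the approximations in $L^{\infty}(0,T;L^2(\mathcal{O}))\cap L^2(0,T;H^1(\mathcal{O}))$ for both the velocity and the temperature, depending continuously on $\|u_*\|$, $\|\theta_*\|$ and on the norms of $v$, $w$, $\sigma$ prescribed in the statement.

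Finally, I would pass to the Galerkin limit by compactness. Duality estimates read off from the equations give uniform bounds on $\partial_t\widetilde{u}$ in $L^{4/3}(0,T;V')$ and on $\partial_t\theta$ in $L^{4/3}(0,T;H^1(\mathcal{O})')$, so the Aubin--Lions lemma yields strong convergence of a subsequence in $L^2(\mathcal{O}_T)$, which is enough to identify the limit of the nonlinear terms $(u\cdot\nabla)u$ and $u\cdot\nabla\theta$ in the distributional sense. The boundary conditions $u\cdot\nu=0$, $N(u)=0$ and $R(\theta)=0$ are built into the variational formulation through the choice of $V$ and the boundary terms in the identities above, so they are preserved in the limit; the initial conditions follow from continuity of the approximate solutions in $C_w^0([0,T];L^2)$. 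The main obstacle is handling the indefinite boundary contributions $\int_{\partial\mathcal{O}}M\widetilde{u}\cdot\widetilde{u}$ and $\int_{\partial\mathcal{O}}m\theta^2$ so as to close the energy estimate; this is precisely where the smoothness of $M$ and $m$ together with the $H^{1/2}$ trace inequality and Korn's inequality for $V$ are used. Everything else is parallel to the classical Leray--Hopf theory for the coupled Navier--Stokes/heat system.
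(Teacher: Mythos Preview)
Your overall strategy---lift the divergence, reduce to a divergence-free velocity, Galerkin scheme with the slip-Stokes and Robin-Laplace eigenbases, energy estimates via Korn plus trace interpolation, Gronwall, Aubin--Lions---is exactly the route the paper takes in Appendix~A. The key identity $-\int_{\mathcal{O}}\Delta u\cdot v=2\int_{\mathcal{O}}D(u)\cdot D(v)-2\int_{\partial\mathcal{O}}[D(u)\nu]_{\tan}\cdot v$ and the use of Korn's second inequality to close the estimate are used identically.

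The one genuine difference is the lifting. The paper lifts $\sigma$ by solving an \emph{evolutionary Stokes} problem with the Navier slip condition $N(u_\sigma)=0$, invoking maximal $L^p$-regularity (Shimada, Shibata--Shimada). This buys them that $u_h=u-u_\sigma$ satisfies the \emph{homogeneous} Navier condition $N(u_h)=0$, so the integration-by-parts identity you quote is directly applicable. Your gradient lifting $u^{\sigma}=\nabla\phi$ is more elementary, but it only gives $u^{\sigma}\cdot\nu=0$; it does \emph{not} give $N(u^{\sigma})=0$ in general. Hence your divergence-free part $\widetilde{u}$ actually satisfies $N(\widetilde{u})=-N(u^{\sigma})$ on $\varLambda_T$, and the identity $-\int_{\mathcal{O}}\Delta\widetilde{u}\cdot\widetilde{u}=2\|D(\widetilde{u})\|^2+2\int_{\partial\mathcal{O}}M\widetilde{u}\cdot\widetilde{u}$ you wrote is off by a boundary term $-2\int_{\partial\mathcal{O}}N(u^{\sigma})\cdot\widetilde{u}$. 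This term is smooth (since $\sigma$, hence $\phi$, is smooth) and harmless once added to the right-hand side of the energy inequality, but as written your variational formulation encodes the wrong boundary condition for $u$. Either switch to the paper's Stokes lifting, or keep yours and carry the extra boundary source through the estimates.

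One smaller point: the compatibility $\int_{\mathcal{O}}\sigma(t,\cdot)=0$ is needed for every $t$ (both your Neumann problem and the paper's Stokes lifting require it), whereas $u_*\cdot\nu=0$ only gives it at $t=0$. In the paper this condition is implicit, since any solution of \eqref{def_solu} with $u\cdot\nu=0$ forces it; you should note this rather than derive it from the initial data alone.
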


\subsection{Smoothing effect of the uncontrolled Boussinesq system}\label{slip_cond}

The goal of this section is to show that, starting from $L^2$ initial data, for any small time interval, one can find a time such that the solution is sufficiently smooth. More precisely, we have the following result:
\begin{lema}\label{lemma9}
	Let us assume that $T>0$ and $(\overline{u},\overline{\te})\in C^\infty([0,T]\times \overline{\Oo})^{n+1}$ is such that $\nabla\cdot \overline{u}=0$ in $\mathcal{O}_T$ and $\overline{u}\cdot \nu=0$ on $\varLambda_T$.
Then there exists a smooth function $\Psi: \R^+ \mapsto \R^+$ with $\Psi(0)=0$ such that, for any $(r_*,q_*)\in L_{div}^2(\Oo)^n\times  L^2(\Oo)$ and any
	weak Leray-Hopf solution $(r,q)\in W_T(\Oo)$ to:
	\begin{equation}\label{equation_lemma9}
		\left\{
   		 \begin{array}{lll}
    		\partial_t r-\Delta r+(r\cdot\nabla)r+(\overline{u}\cdot\nabla)r+(r\cdot\nabla)\overline{u}+\nabla \pi = qe_n &\hbox{in}& \mathcal{O}_T,\\
    		\noalign{\smallskip}\dis
    		\partial_t q -\D q+(r+\overline{u})\cdot\nabla q+r\cdot\nabla \overline{\te}=0&\hbox{in}& \mathcal{O}_T,\\
    		\noalign{\smallskip}\dis
    		\nabla\cdot r=0&\hbox{in}& \mathcal{O}_T,\\
    		\noalign{\smallskip}\dis
    		r\cdot \nu=0, \quad N(r)=0&\hbox{on}& \varLambda_T,\\
    		\noalign{\smallskip}\dis
    		R(q)=0&\hbox{on}& \varLambda_T,\\
    		\noalign{\smallskip}\dis
    		r(0,\cdot\,)=r_*, \quad q(0,\cdot\,)=q_*&\hbox{in}& \mathcal{O},
    	\end{array}
    	\right.
	\end{equation}
	the following property holds:
	\begin{equation}
		\exists \ t_0 \in [0,T]; \ \ \|(r,q)(t_0,\cdot\,)\|_{H^3}\leq \Psi\left( \|(r_*,q_*)\|\right).
	\end{equation}
\end{lema}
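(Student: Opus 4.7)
The plan is to run a standard parabolic bootstrap: derive an energy estimate for the weak Leray--Hopf solution, use an averaging argument to pick an intermediate time where the solution enjoys $H^1$ regularity, then iterate parabolic smoothing twice to reach $H^3$. The function $\Psi$ will encode the accumulated dependence on $\|(r_*,q_*)\|$ through each step.

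First I would test the momentum equation against $r$ and the temperature equation against $q$. The divergence-free condition on $r$ and $\overline u$ kills the transport nonlinearities $((r+\overline u)\cdot\nabla) r\cdot r$ and $((r+\overline u)\cdot\nabla)q\,q$; the linearized terms $(r\cdot\nabla)\overline u\cdot r$ and $r\cdot\nabla\overline\theta\,q$ are bounded by $\|(\overline u,\overline\theta)\|_{W^{1,\infty}}\|(r,q)\|^2$, and the buoyancy coupling $qe_n\cdot r$ is handled by Cauchy--Schwarz. Using the identity $-\int\Delta r\cdot r=2\int|D(r)|^2-2\int_{\partial\Oo}[D(r)\nu]_{tan}\cdot r$, the Navier boundary condition turns the boundary contribution into $2\int_{\partial\Oo}Mr\cdot r$, which is absorbed via Korn's inequality, and the Robin term $m q^2$ is treated analogously. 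After Gronwall this yields the Leray--Hopf-type bound
\begin{equation*}
 \sup_{t\in[0,T]}\|(r,q)(t)\|^2+\int_0^T\|(r,q)\|_{H^1}^2\,dt\leq C\|(r_*,q_*)\|^2,
\end{equation*}
with $C=C(\|(\overline u,\overline\theta)\|_{C^1},T,M,m)$. By Chebyshev (mean-value) applied on $[0,T/3]$ we extract a time $t_1\in[0,T/3]$ with $\|(r,q)(t_1)\|_{H^1}\leq\Psi_1(\|(r_*,q_*)\|)$.

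Starting now from the $H^1$ datum at $t_1$, I would construct a local strong solution $(\tilde r,\tilde q)$ to the same system on some interval $[t_1,t_1+\tau_1]$ with $\tau_1$ depending only on $\|(r,q)(t_1)\|_{H^1}$ and on $\overline u,\overline\theta$; standard energy methods (testing by $-\Delta r$ and $-\Delta q$, or by $\partial_t r$ and $\partial_t q$, after using a Galerkin scheme) give $(\tilde r,\tilde q)\in L^\infty(t_1,t_1+\tau_1;H^1)\cap L^2(t_1,t_1+\tau_1;H^2)$. The key point is then to invoke weak--strong uniqueness: on $[t_1,t_1+\tau_1]$ the Leray--Hopf solution $(r,q)$ of \eqref{equation_lemma9} must coincide with $(\tilde r,\tilde q)$, because the difference satisfies an energy inequality with a linearized drift that is controlled in $L^\infty_tL^\infty_x$ by the strong solution (here the Navier slip-with-friction boundary condition, handled via the identity above, causes no issue). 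Averaging once more on $[t_1,t_1+\tau_1/2]$ gives a time $t_2$ with $\|(r,q)(t_2)\|_{H^2}\leq\Psi_2(\|(r_*,q_*)\|)$.

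One further iteration, now starting from $H^2$ data at $t_2$, upgrades the strong solution on some $[t_2,t_2+\tau_2]$ to class $L^\infty H^2\cap L^2 H^3$; a last Chebyshev step produces $t_0$ with $\|(r,q)(t_0)\|_{H^3}\leq\Psi(\|(r_*,q_*)\|)$, and by choosing the initial averaging interval small enough (say $T/3,T/6,T/12$) one ensures $t_0\in[0,T]$. The main obstacle is the second bullet, namely verifying weak--strong uniqueness for this coupled system under Navier slip-with-friction and Robin boundary data: one has to identify the weak Leray--Hopf solution $(r,q)$ with the locally constructed strong solution, which requires using the strong solution as a test function in the weak formulation of $(r,q)$ and absorbing the boundary terms via the $[D(\cdot)\nu+M\cdot]_{tan}=0$ condition plus Korn. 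Once that identification is accepted, the two subsequent regularity bootstraps are routine parabolic estimates, and the composition $\Psi=\Psi_3\circ\Psi_2\circ\Psi_1$ provides the required smooth, vanishing-at-zero function.
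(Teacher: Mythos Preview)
Your overall scheme is correct and close in spirit to the paper's proof: both start from the Leray--Hopf energy estimate, use a mean-value argument to find $t_1$ with $H^1$ data, and then bootstrap. There are, however, two genuine differences worth noting.

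First, the paper never invokes weak--strong uniqueness. It works \emph{formally} with the given weak solution throughout: in its Step~2 it multiplies by $-Sr$ (the Stokes operator applied to $r$) and by $-\Delta q$ to obtain a differential inequality of the form $Y'\le C(Y+Y^3)$ for $Y\sim\|(r,q)\|_{H^1}^2$, solves this Bernoulli-type ODE on a short interval $[t_1,t_1+\tau_1]$ with $\tau_1\lesssim(1+Y_*^2)^{-1}$, and extracts $t_2$ with an $H^2$ bound. Your route---build a local strong solution from $H^1$ data and identify it with $(r,q)$ via weak--strong uniqueness---is the rigorous way to justify exactly those manipulations, and is arguably cleaner; the paper is tacitly relying on the same mechanism (or on Galerkin truncation) without saying so.

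Second, to climb from $H^2$ to $H^3$ the paper does \emph{not} iterate the spatial bootstrap as you propose. Instead it differentiates the system in time, tests against $(\partial_t r,\partial_t q)$ to bound $\|(r_t,q_t)\|$ in $L^\infty_tL^2_x\cap L^2_tH^1_x$, then at a chosen time $t_3$ views the momentum equation as a stationary Stokes problem with Navier boundary condition and the temperature equation as an elliptic Robin problem, and reads off $H^3$ regularity from the elliptic theory (their Lemmas on Stokes and Robin regularity). Your purely spatial iteration ($H^2$ data $\Rightarrow L^\infty H^2\cap L^2 H^3$, then average) also works; the paper's time-derivative plus elliptic-regularity route is the more classical device and generalizes more easily to higher orders. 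One minor caveat in your write-up: the lengths $\tau_1,\tau_2$ of the later subintervals are dictated by the size of the data, not by fixed fractions of $T$, so the parenthetical ``$T/3,T/6,T/12$'' is misleading; what actually ensures $t_0\in[0,T]$ is that each $\tau_j$ is positive and the first averaging is done on $[0,T/3]$.
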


	The proof of this lemma is quite classical but, for completeness, will be given  in Appendix \ref{appen:B}.


\section{Approximate controllability problem}\label{sect_app_cont}

	In this section, the goal is to prove the following approximate controllability result starting from sufficiently smooth initial data.
\begin{prop}\label{lemma_trajec_aprox}
	Let us assume that $T>0$ and $(\overline{u},\overline{p},\overline{\te},\overline{v}, \overline{w},\overline{\sigma})\in C^\infty([0,T]\times \overline{\Oo};\mathbb{R}^{2n+4})$ is a smooth trajectory  of \eqref{def_solu}, with $\overline{v}$ and $\overline{w}$ supported in $ \overline{\Oo} \setminus \overline{\Omega}$.
	Let  $(u_*,\te_*) \in  [H^3(\mathcal{O})^n\cap L^2_{div}(\mathcal{O})^n]\times H^3(\Oo)$ be an initial state. 	
	Then, for any $\delta>0$ there exist regular controls $v$, $w$ and $\sigma$, again supported in $ \overline{\Oo} \setminus \overline{\Omega}$ and an associated  weak solution  to 		\eqref{def_solu} satisfying
	\begin{equation*}
	    \|(u, \theta)(T,\cdot\,)-(\overline{u}, \overline{\theta})(T,\cdot\,)\|\leq \delta.
	\end{equation*}
\end{prop}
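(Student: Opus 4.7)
The plan is to adapt to the Boussinesq setting the Coron--Marbach--Sueur strategy \cite{coron_marbach} developed for the Navier--Stokes system with Navier slip-with-friction conditions. Since we are free to act on a subinterval $[0,\varepsilon]\subset[0,T]$ and let the smooth target trajectory propagate freely on $[\varepsilon,T]$, we may reduce to the case of a small time horizon. The hyperbolic rescaling $\widetilde u(\tau,x):=\varepsilon u(\varepsilon\tau,x)$, $\widetilde \theta(\tau,x):=\varepsilon^2\theta(\varepsilon\tau,x)$, $\widetilde p(\tau,x):=\varepsilon^2 p(\varepsilon\tau,x)$ then turns \eqref{def_solu} on $[0,\varepsilon]$ into a Boussinesq system on the fixed interval $[0,1]$ with viscosity and thermal diffusion both equal to $\varepsilon$, buoyancy $\widetilde\theta\, e_n$, and initial data $(\varepsilon u_*,\varepsilon^2\theta_*)$. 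It suffices to produce, for every $\varepsilon>0$ small, a solution of the rescaled system that at $\tau=1$ is $o(1)$-close to the rescaled target in $L^2(\mathcal{O})$; well-posedness stability of the smooth trajectory on $[\varepsilon,T]$ then transports this smallness to the final time.

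In the limit $\varepsilon\to 0$, the formal problem reduces to the inviscid Boussinesq system (Euler coupled with a pure transport equation for temperature). Following the \emph{return method}, I would build a smooth reference trajectory $(\widehat u,\widehat\theta,\widehat p)$ of this inviscid system on $[0,1]$, vanishing at $\tau=0$ and $\tau=1$, whose characteristic flow pushes every particle initially in $\overline\Omega$ out of $\Omega$ through $\Gamma_c$ and into the extension region $\mathcal{O}\setminus\overline\Omega$ before $\tau=1$; in that region the distributed controls $v,w,\sigma$ can freely absorb the mass. Relying on the inviscid Boussinesq controllability arguments of \cite{bouss_cara,glass_3D} and linearising around $(\widehat u,\widehat\theta)$, one then constructs a small inviscid corrector $(u^E,\theta^E)$, with sources supported in $\mathcal{O}\setminus\overline\Omega$, so that the sum $(\widehat u+u^E,\widehat\theta+\theta^E)$ matches the initial data at $\tau=0$ and the rescaled target at $\tau=1$.

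The main technical obstacle is the boundary-layer correction required to restore the Navier condition $N(\,\cdot\,)=0$ and the Robin condition $R(\,\cdot\,)=0$ for the viscous solution. I would seek the latter in the form
\begin{equation*}
    \widetilde u = \widehat u + u^E + \sqrt{\varepsilon}\, V\!\left(\tau,x,\tfrac{d(x)}{\sqrt{\varepsilon}}\right) + \varepsilon\, r^\varepsilon, \quad
    \widetilde \theta = \widehat\theta + \theta^E + \sqrt{\varepsilon}\, \vartheta\!\left(\tau,x,\tfrac{d(x)}{\sqrt{\varepsilon}}\right) + \varepsilon\, \rho^\varepsilon,
\end{equation*}
where $d$ denotes the distance to $\partial\mathcal{O}$. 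The decisive structural fact, already exploited in \cite{coron_marbach}, is that under Navier and Robin boundary conditions the profiles $V$ and $\vartheta$ satisfy \emph{linear} heat-type equations in the fast normal variable $z=d(x)/\sqrt{\varepsilon}$, driven respectively by the tangential trace of $\widehat u+u^E$ together with the friction matrix $M$, and by the defect of $R(\widehat\theta+\theta^E)$ together with the coefficient $m$; this is in sharp contrast with the nonlinear Prandtl equation produced by Dirichlet data. One then implements the \emph{well-prepared dissipation} method of \cite{coron_marbach}: by tuning the return trajectory so that its boundary trace is switched off sufficiently early, the parabolic decay in the layer variable yields $\|V(1,\cdot,\cdot)\|+\|\vartheta(1,\cdot,\cdot)\|=o(1)$ in $L^2$. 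The main novelty relative to the Navier--Stokes case is the two-way coupling of the velocity and temperature layers through the buoyancy term and the transport of $\widetilde\theta$; I expect this coupling, and in particular the check that the Robin layer for $\vartheta$ does not reinject a leading-order contribution into the tangential Navier balance for $V$, to be the hardest point.

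Finally, the remainders $(r^\varepsilon,\rho^\varepsilon)$ satisfy a perturbation of the linearised Boussinesq system whose right-hand sides, obtained by substituting the ansatz into the rescaled equations, are small thanks to the cancellations built into the expansion and to the well-prepared dissipation of the boundary layers. Standard energy estimates based on the second Korn inequality and Gr\"onwall's lemma yield $\|(r^\varepsilon,\rho^\varepsilon)\|_{L^\infty(0,1;L^2(\mathcal{O}))}=O(1)$ uniformly in $\varepsilon$, so that the total error between $(\widetilde u,\widetilde\theta)(1,\cdot\,)$ and the rescaled target is $O(\sqrt\varepsilon)$. Undoing the rescaling and propagating stably over $[\varepsilon,T]$ produces a controlled solution of \eqref{def_solu} within distance $\delta$ of $(\overline u,\overline\theta)(T,\cdot\,)$ for $\varepsilon$ small enough, which completes the proof.
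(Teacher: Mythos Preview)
Your proposal has the right high-level architecture (return method, layer expansion, well-prepared dissipation), but there is a genuine gap in the time scaling that makes the argument fail as written. You control on $[0,\varepsilon]$ and rescale to the \emph{fixed} interval $[0,1]$. On that interval the boundary-layer profile $V$ satisfies an $\varepsilon$-independent linear problem driven by the $O(1)$ Navier defect of $\widehat u$, so $V(1,\cdot,\cdot)$ is a fixed, generically nonzero function; it is \emph{not} $o(1)$ as $\varepsilon\to 0$. Consequently $\widetilde u(1,\cdot)-\text{(rescaled target)}=\sqrt{\varepsilon}\,V(1)+O(\varepsilon)=O(\sqrt{\varepsilon})$, and after undoing your scaling $u=\varepsilon^{-1}\widetilde u$ you obtain an $O(\varepsilon^{-1/2})$ error at time $\varepsilon$, which diverges. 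The paper (and \cite{coron_marbach}) avoids this precisely by rescaling the \emph{full} interval $[0,T]$ to the \emph{long} interval $[0,T/\varepsilon]$: the well-prepared dissipation lemma yields polynomial decay $|\rho(t,\cdot,\cdot)|\lesssim (\log(2+t)/(2+t))^{1/4+k/2}$, which is only useful at $t=T/\varepsilon\to\infty$. The paper explicitly explains that on a bounded rescaled interval the layer is too large (Section~3.3.2). Your sentence ``tuning the return trajectory so that its boundary trace is switched off sufficiently early'' conflates two mechanisms: switching off $u^0$ before the end of the interval only sets the source of the layer equation to zero; the size of the layer then decays parabolically in the remaining rescaled time, and that time must diverge.

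A secondary discrepancy: the paper introduces \emph{no} boundary layer for the temperature. In the expansion one takes $\theta^0\equiv 0$ (Lemma~\ref{exis_sol_u0}), so the Robin condition $R(\theta^0)=0$ holds automatically at leading order; the defect $R(\theta^1)$ appears only at order $\varepsilon^2$ and is absorbed into the remainder $q^\varepsilon$ via the boundary estimates \eqref{eq:estiqpartialq}--\eqref{eq:estiqpartialq3}. (This is where the Robin structure is crucial; cf.~Remark~\ref{remark:dirichlet}.) Your $\sqrt{\varepsilon}\,\vartheta$ term is unnecessary and would in fact worsen the bookkeeping, since undoing the temperature scaling requires an $o(\varepsilon^2)$ error, not $o(\varepsilon)$. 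The ``two-way coupling of the velocity and temperature layers'' you anticipate being the hardest point is therefore not the issue; the real work is the long-time layer decay for the velocity combined with remainder estimates over $[0,T/\varepsilon]$ (Section~\ref{sec_est_rem}), which yield $\|r^\varepsilon\|_{L^\infty(L^2)}+\|q^\varepsilon\|_{L^\infty(L^2)}=O(\varepsilon^{1/8})$.
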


	For the proof, we will follow the strategy introduced by Coron, Marbach, Sueur in \cite{coron_marbach}. Let us explain how it works:
	\begin{itemize}
	    \item First, a scale change associated to a small parameter $\varepsilon >0$ is introduced and \eqref{def_solu} is transformed into a Boussinesq system with small viscosity $\varepsilon$ that must be solved in the (long)  time interval $[0,T/\varepsilon]$ starting from a small initial state, see \eqref{equa_u_layer_}. The advantage of this scaling is that we can benefit from the nonlinear terms  $(u \cdot \nabla) u$ and $u \cdot \nabla \theta$.
	    \item By taking formally $\varepsilon = 0$, we obtain the inviscid Boussinesq system. For this hyperbolic system, we construct a particular nontrivial trajectory that connects $(0,0) \in \mathbb{R}^{n+1}$ to itself and sends any particle outside the physical domain before the final  time $T$.
	    \item By linearizing the inviscid Boussinesq system around the previous trajectory, we obtain a new hyperbolic linear system that is small-time globally null-controllable. Actually, what we are doing here is to apply the so called  return method, due to Coron, see \cite{coron_1992}. Note that the linearization around the trivial state leads to a noncontrollable system.
	    \item In the particular case of the special slip boundary condition, that is, $M$ such that $[\nabla\times u]_{\tan}=0$ on $\varLambda_T$ and $m\equiv 0$, we immediately conclude by estimating the remainder terms. We do not need to use the long interval time $[0,T/\varepsilon]$ to control, since the solution is already small at the intermediate time $T \in (0,T/\varepsilon)$.
	    \item Unfortunately, in the general case, a boundary layer appears. This phenomenon was already taken into account in \cite{iftimie_Sueur} for the Navier-Stokes PDEs. Thus, we have to introduce some corrector terms in the asymptotic expansion of the solution depending on $\varepsilon$ in order to estimate the residual layers. The boundary layer decays but not enough. Hence, the corrector is not sufficiently small at the final time $T/\varepsilon$ and we still cannot conclude.
	    \item In order to overcome this difficulty, we adapt the well-prepared dissipation method, introduced by Marbach in \cite{marbach_burgers}. The idea is to design a control strategy that reinforces the action of  the natural dissipation of the boundary layer after the intermediate time $T$. A desired small state is obtained at final time and we can finally achieve the proof.
	\end{itemize}
	
	In the sequel, we will frequently need vector functions $(u,p,\te,v,w,\sigma)$ representing adequate states $(u,p,\te)$, controls $(v,w)$ and auxiliary functions $\sigma$, corresponding to some linear or nonlinear systems. In all cases, it will be implicitly assumed that $v$ and $w$ vanish outside $ \overline{\Oo} \setminus \overline{\Omega}$. 
	\subsection{Time scaling}
	 Let us introduce $u^{\varepsilon}$, $p^{\varepsilon}$, etc., with
	\begin{equation}\label{scaling}
	\begin{array}{ccc}
		u^{\varepsilon}(t,x):= \varepsilon u(\varepsilon t,x),& 
		p^{\varepsilon}(t,x):=\varepsilon^2p(\varepsilon t,x),& 
		\te^{\varepsilon}(t,x):=\varepsilon^2\te(\varepsilon t,x),\\ 
		v^{\varepsilon}(t,x):=\varepsilon^2v(\varepsilon t,x),& 
		w^{\varepsilon}(t,x):=\varepsilon^3w(\varepsilon t,x)& 
		\sigma^{\varepsilon}(t,x):=\varepsilon\sigma(\varepsilon t,x).
	\end{array}
	\end{equation}
	In these new variables, the original system \eqref{def_solu} reads
	\begin{equation}\label{equa_u_layer_}
	\left\{
    \begin{array}{lll}
    		\partial_t u^\varepsilon-\varepsilon\Delta u^\varepsilon+(u^\varepsilon\cdot\nabla)u^\varepsilon+\nabla p^\varepsilon = \theta^\varepsilon e_n + v^\varepsilon&\hbox{in}& (0,T/\varepsilon)\times\mathcal{O},\\
    	\noalign{\smallskip}\dis
    		\partial_t\te^\varepsilon-\varepsilon\D \te^\varepsilon+u^\varepsilon\cdot \nabla \te^\varepsilon=w^\varepsilon&\hbox{in}& (0,T/\varepsilon)\times\mathcal{O},\\
    	\noalign{\smallskip}\dis
    \nabla\cdot u^\varepsilon=\sigma^\varepsilon&\hbox{in}& (0,T/\varepsilon)\times\mathcal{O},\\
    		u^\varepsilon\cdot \nu=0, \quad N(u^\varepsilon)=0&\hbox{on}& (0,T/\varepsilon)\times\partial\mathcal{O},\\
    	\noalign{\smallskip}\dis
    		R (\te^\varepsilon)=0&\hbox{on}& (0,T/\varepsilon)\times\partial\mathcal{O},\\
    	\noalign{\smallskip}\dis
    		u^\varepsilon(0,\cdot\,)=\varepsilon u_*, \quad \te^\varepsilon(0,\cdot\,)=\varepsilon^2\te_*&\hbox{in}& \mathcal{O}.
    \end{array}
    \right.
	\end{equation}
	\indent Instead of working hard in a small time interval, we now work easily during a large time interval $[0,T/\varepsilon]$. The counterpart is the small viscosity that we find now in \eqref{equa_u_layer_}, that can be viewed as a singular perturbation of a nonlinear inviscid system.\\
	\indent To prove Proposition \ref{lemma_trajec_aprox}, 
	it is sufficient to prove that
	\begin{equation*}
		\left\|u^{\varepsilon}(T/\varepsilon,\cdot\,)-\varepsilon\overline{u}(T,\cdot\,)\right\|=o(\varepsilon)\quad 
		\hbox{and}\quad
		\left\|\te^{\varepsilon}(T/\varepsilon,\cdot\,)-\varepsilon^2\overline{\te}(T,\cdot\,)\right\|=o(\varepsilon^2).
	\end{equation*}

	\subsection{The special case of the slip boundary condition}

	In this section, we consider a special situation where the fluid perfectly slips.
	In this case, the proof of Proposition \ref{lemma_trajec_aprox} is more simple  (there is no boundary layer). For the moment, we will also assume that the smooth target trajectory  is zero, i.e., $(\overline{u},\overline{p}, \overline{\te},\overline{v}, \overline{w},\overline{\sigma}) \equiv 0$.

	Thus, the friction coefficient $M$ is assumed to be the Weingarten map (or shape operator) $M_w$. 
	Thanks to \cite[Lemma $1$]{coron_marbach}, on the uncontrolled boundary one has
\begin{equation}\label{boud_cond_slip}
	u\cdot \nu=0 \quad \hbox{and} \quad [\nabla\times u]_{tan}=0 \quad \hbox{on}\quad \varLambda_T.
\end{equation}

\subsubsection{Ansatz with no correction term}
Let us consider an asymptotic expansion of the solution:
\begin{equation}\label{expans}
    \begin{array}{lll}
		\ue=u^0+\varepsilon u^1+\varepsilon r^{\varepsilon},&
		p^{\varepsilon}=p^0+\varepsilon p^1+\varepsilon \pi^{\varepsilon},&
		\te^{\varepsilon}=\te^0+\varepsilon^2 \te^1+\varepsilon^2 q^{\varepsilon}\\
		v^{\varepsilon}=v^0+\varepsilon v^1,&
		w^{\varepsilon}=w^0+\varepsilon^2 w^1,&
		\sigma^{\varepsilon}=\sigma^0.
	\end{array}
\end{equation}
There is  some intuition behind \eqref{expans}. The first term $(u^0, p^0, \theta^0, v^0, w^0)$ is the solution to an inviscid system, take $\varepsilon = 0$ in \eqref{equa_u_layer_}. It models a smooth reference trajectory around which we linearize the original system. This is exactly what we have to do when we apply the return method of Coron, see \cite{coron_1992}. It  will be chosen in such a way that the flow flushes the initial data off the physical domain before time $T$. The second term $(u^1, p^1, \theta^1, v^1, w^1)$ takes into account the initial data $(u_*, \theta_*)$.
Then, $(r^\varepsilon, \pi^\varepsilon, q^\varepsilon)$ contains higher order terms. At the end, we need to prove that $\|(r^{\varepsilon}, q^{\varepsilon})(T,\cdot\,)\|= o(1)$, in order to be able to conclude.

\subsubsection{Inviscid flow} 
\label{sec:inviscid}
\indent By taking $\varepsilon =0$ in \eqref{equa_u_layer_}, we obtain the following system 
\begin{equation}\label{solu_line}
    \left\{
    \begin{array}{lll}
    		\partial_t u^0+(u^0\cdot\nabla)u^0+\nabla p^0 = \theta^0 e_n + v^0& \text{in}& \mathcal{O}_T,\\
    	\noalign{\smallskip}\dis
    		\partial_t \te^0+u^0\cdot \nabla \te^0=w^0& \text{in} & \mathcal{O}_T,\\
    	\noalign{\smallskip}\dis
    		\nabla\cdot u^0=\sigma^0& \text{in}& \mathcal{O}_T,\\
    	\noalign{\smallskip}\dis
    		u^0\cdot \nu=0&\text{on}& \varLambda_T,\\
    	\noalign{\smallskip}\dis
    		u^0(0,\cdot\,)=u^0(T,\cdot\,)=0& \text{in}& \mathcal{O},\\
    	\noalign{\smallskip}\dis
    		\te^0(0,\cdot\,)=\te^0(T,\cdot\,)=0& \text{in}& \mathcal{O},
    \end{array}
	\right.
\end{equation}
where $v^0$, $w^0$ and $\sigma^0$ are smooth forcing terms spatially supported in $\overline{\Oo} \setminus \overline{\Omega}$. 
We want to control \eqref{solu_line} during the shorter time interval $[0, T]$ instead of $[0, T/\varepsilon]$. Let us introduce the flow $\Phi^0=\Phi^0(s;t,x)$ associated to $u^0$, ie., for any  $(t,x)$, $\Phi^0(\,\cdot\,,t,x)$ solves
\begin{equation}\label{def_flow}
    \left\{
    \begin{array}{lll}
    	\partial_s\Phi^0(s;t,x)&=&u^0(s,\Phi^0(s;t,x)),\\
		\Phi^0(s;t,x)|_{s=t}&=&x.
\end{array}
	\right.
\end{equation}
Hence, we look for trajectories such that:
\begin{equation}\label{prop_flow}
\forall \ x \in \overline{\mathcal{O}}, \ \exists \  t_x \in (0,T), \ \Phi^0(t_x;0,x)\not\in \overline{\Omega}.
\end{equation}
This property is obvious for the points $x$ already located in $\overline{\mathcal{O}} \setminus  \overline{\Omega}$. For points $x\in  \overline{\Omega}$, we use the following result, whose proof can be found in \cite{Coron2,Coron3,Coron_2D,coron_marbach} in the $2$D case and \cite{glass_3D} in the $3$D case:

\begin{lema}\label{exis_sol_u0}
There exists a non-zero solution to \eqref{solu_line} $(u^0,p^0,\te^0,v^0,w^0,\sigma^0)\in C^\infty([0,T]\times\overline{\mathcal{O}};\mathbb{R}^{2n+4})$  such that the associated flow $\Phi_0$, defined in  \eqref{def_flow}, satisfies \eqref{prop_flow}. Moreover, we can choose $u^0$, $\te^0$ and $w^0$ such that
\begin{equation}\label{prop_u0}
\te^0=w^0=0 \ \ \text{and} \ \ \nabla\times u^0=0 \ \ in \ \ [0,T]\times \overline{\mathcal{O}}
\end{equation}
and $u^0,p^0,\te^0,v^0,w^0$ and $\sigma^0$ are compactly supported in time in $(0,T)$. 
\end{lema}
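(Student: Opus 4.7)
The plan is to reduce the problem to an Euler-type potential flow construction that has already been carried out in the cited literature. Since the statement allows the freedom to prescribe $\theta^0$ and $w^0$, the first move is to set $\theta^0 \equiv 0$ and $w^0 \equiv 0$ on $[0,T]\times\overline{\mathcal{O}}$. With this choice, the temperature equation in \eqref{solu_line} is automatically satisfied, the buoyancy term $\theta^0 e_n$ disappears from the momentum equation, and the boundary condition $R(\theta^0)=0$ and initial/final conditions on $\theta^0$ are trivially verified. Hence we only need to find $(u^0, p^0, v^0, \sigma^0)$ solving the inviscid forced Euler-type system on $\mathcal{O}_T$, with $v^0$ and $\sigma^0$ spatially supported in $\overline{\mathcal{O}}\setminus\overline{\Omega}$, all functions compactly supported in time in $(0,T)$, $\nabla\times u^0=0$, and with flow $\Phi^0$ satisfying \eqref{prop_flow}.

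Next, I would seek $u^0$ as a potential flow: write $u^0(t,x) = \nabla \phi(t,x)$ for a smooth time-dependent potential $\phi$. Then $\nabla\times u^0 = 0$ automatically holds, and $\nabla\cdot u^0 = \Delta\phi =: \sigma^0$. The nonlinear term becomes a gradient, $(u^0\cdot\nabla)u^0 = \tfrac{1}{2}\nabla |u^0|^2$, so the momentum equation reduces to
\begin{equation*}
\nabla\bigl(\partial_t \phi + \tfrac{1}{2}|\nabla\phi|^2 + p^0\bigr) = v^0 \quad\text{in } \mathcal{O}_T.
\end{equation*}
Choosing $v^0$ to be a gradient of a function supported in $\overline{\mathcal{O}}\setminus\overline{\Omega}$ and defining $p^0$ by Bernoulli's relation $p^0 := -\partial_t\phi - \tfrac{1}{2}|\nabla\phi|^2$ inside $\overline{\Omega}$ (and adjusted outside to absorb $v^0$) settles the momentum equation. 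The support conditions on $\sigma^0$ and $v^0$ force $\phi$ to be harmonic on $\overline{\Omega}$ at each time, so one is reduced to constructing a time-dependent family of harmonic functions on $\overline{\Omega}$ whose gradient flow flushes $\overline{\Omega}$ out of itself.

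The construction of such a potential $\phi$ realizing \eqref{prop_flow} is precisely the return-method construction already established by Coron \cite{Coron2,Coron3,Coron_2D,coron_marbach} in 2D and by Glass \cite{glass_3D} in 3D for the Euler equations, using the assumption that $\Gamma_c$ meets every connected component of $\Gamma$. I would invoke these results to obtain a smooth, compactly-in-time supported potential $\phi$ (equivalently, smooth curl-free $u^0$ and divergence source $\sigma^0$ spatially supported in $\overline{\mathcal{O}}\setminus\overline{\Omega}$) whose flow $\Phi^0$ satisfies \eqref{prop_flow} on some subinterval $[t_1,t_2]\subset(0,T)$. A standard cutoff in time (multiplying $\phi$ by a smooth function equal to $1$ on $[t_1,t_2]$ and vanishing near $0$ and $T$) preserves the flushing property on the core interval while forcing $u^0(0,\cdot)=u^0(T,\cdot)=0$ and compact temporal support; the resulting extra terms in $\partial_t\phi$ are simply absorbed into $p^0$, and the tangency $u^0\cdot\nu=0$ on $\varLambda_T$ follows from the harmonicity of $\phi$ near $\partial\mathcal{O}$ combined with the construction.

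The main obstacle here is not in the Boussinesq structure—which collapses to Euler via the choice $\theta^0\equiv 0$—but in the return-method construction of the potential flow with the flushing property \eqref{prop_flow}. This step is delicate, especially in the 3D case, because one must guarantee that every particle starting in $\overline{\Omega}$ is carried into $\overline{\mathcal{O}}\setminus\overline{\Omega}$ at some moment in $(0,T)$ while keeping $\phi$ harmonic on $\overline{\Omega}$ and controlling its support. Since this is exactly what is done in \cite{glass_3D} (and in 2D in \cite{Coron2,Coron3,Coron_2D,coron_marbach}), I would quote these constructions rather than reproduce them.
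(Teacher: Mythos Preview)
Your proposal is correct and matches the paper's approach exactly: the paper does not give a proof but simply cites \cite{Coron2,Coron3,Coron_2D,coron_marbach} for the 2D case and \cite{glass_3D} for the 3D case, noting that the hypothesis that $\Gamma_c$ meets every connected component of $\Gamma$ is used there. Your outline (set $\theta^0\equiv 0$, take $u^0=\nabla\phi$ a potential flow, invoke the Coron/Glass return-method construction of the flushing harmonic potential, and absorb the remaining terms into $p^0$ and $v^0$) is precisely the content of those references, so you have in fact supplied more detail than the paper itself.
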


Note that, in the proof of this result, the assumption that $\Gamma_c$ intersects all connected components of $\Gamma$ must be used.

In the sequel, if needed, it will be assumed that $u^0,p^0,\te^0,v^0,w^0$ and $\sigma^0$ have been extended  by zero after time $T$.

\subsubsection{Flushing}
\label{sec:flushing}
 Let $(u^1, \theta^1)$ be the solution to the linear problem 
\begin{equation}\label{u1_equa}
    \left\{
    \begin{array}{lll}
    		\partial_t u^1+(u^0\cdot\nabla)u^1+(u^1\cdot\nabla)u^0+\nabla p^1 = \Delta u^0+ v^1& \text{in}& \mathcal{O}_T,\\
    	\noalign{\smallskip}\dis
    		\partial_t \te^1+u^0\cdot \nabla \te^1=w^1& \text{in}& \mathcal{O}_T,\\
    	\noalign{\smallskip}\dis
    		\nabla\cdot u^1=0& \text{in}&\ \mathcal{O}_T,\\
    	\noalign{\smallskip}\dis
    		u^1\cdot \nu=0&\text{on}& \varLambda_T,\\
    	\noalign{\smallskip}\dis
    		u^1(0,\cdot\,)=u_*, \quad \te^1(0,\cdot\,)=\theta_*& \text{in}& \mathcal{O},
    \end{array}
    \right.
\end{equation}
where $v^1$ and $w^1$ are forcing terms spatially supported in $ \overline{\Oo} \setminus \overline{\Omega}$. 
Thanks to \eqref{prop_u0}, we have $\Delta u^0=\nabla(\nabla\cdot u^0)=\nabla \sigma^0$. Thus, it is smooth
and can be absorbed by the source term $v^1$. Of course, \eqref{u1_equa} is a linear  uncoupled system.


\begin{lema}\label{exist_sol_u1}
Let us assume that $(u_*,\theta_*)\in [H^3(\mathcal{O})^n\cap L^2_{div}(\mathcal{O})^n]\times H^3(\Oo)$. There exist forcing terms 
\begin{equation}\label{eq:suppForcingterms1}
    v^1\in C^1([0,T];H^1(\Oo)^n)\cap C^0([0,T];H^2(\Oo)^n),\ w^1 \in C^1([0,T];H^2(\Oo))\cap C^0([0,T];H^3(\Oo)),
\end{equation}        
  with
\begin{equation} \label{eq:suppForcingterms2}
    \text{supp}(v^1,w^1) \subset\subset \overline{\Oo}\setminus\overline{\Omega}
\end{equation}
such that the associated solution $(u^1,\te^1)$ to \eqref{u1_equa} satisfies $(u^1, \theta^1)(T,\cdot\,)=(0,0)$ in $\Oo$. Moreover, $(u^1, \theta^1)$ is bounded (with respect to $\varepsilon$) in $L^\infty(0,T; H^3(\Oo)^n)\times L^\infty(0,T; H^3(\Oo))$. 
\end{lema}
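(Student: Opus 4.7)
The plan is to exploit the linear hyperbolic structure of \eqref{u1_equa} together with the flushing property \eqref{prop_flow} of the reference flow $\Phi^0$ provided by Lemma \ref{exis_sol_u0}: every point of $\overline{\mathcal{O}}$ is carried by $\Phi^0$ into the control region $\overline{\mathcal{O}} \setminus \overline{\Omega}$ during some non-trivial subinterval of $(0, T)$, so the idea is to distribute the action of $(v^1, w^1)$ along each characteristic precisely during that window.

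First I would treat the temperature equation, which decouples from $u^1$ and is a pure transport equation. Integration along the characteristic through $x \in \mathcal{O}$ gives
\begin{equation*}
\theta^1(T, \Phi^0(T; 0, x)) = \theta_*(x) + \int_0^T w^1(s, \Phi^0(s; 0, x))\, ds.
\end{equation*}
Using \eqref{prop_flow} together with a standard compactness-continuity argument, one picks a smooth weight $\rho \in C^\infty_c((0,T) \times (\overline{\mathcal{O}} \setminus \overline{\Omega}))$ such that $N(x) := \int_0^T \rho(s, \Phi^0(s; 0, x))\, ds$ is strictly positive on $\overline{\mathcal{O}}$. Setting
\begin{equation*}
w^1(t, y) := - \rho(t, y)\, \frac{\theta_*(\Phi^0(0; t, y))}{N(\Phi^0(0; t, y))},
\end{equation*}
the group property $\Phi^0(0; s, \Phi^0(s; 0, x)) = x$ yields $\int_0^T w^1(s, \Phi^0(s; 0, x))\, ds = -\theta_*(x)$, hence $\theta^1(T, \cdot) \equiv 0$ on $\mathcal{O}$, while $w^1$ is supported in $\overline{\mathcal{O}} \setminus \overline{\Omega}$ by construction. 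Smoothness of $\Phi^0$, $\rho$ and $1/N$ combined with $\theta_* \in H^3(\mathcal{O})$ delivers the regularity announced in \eqref{eq:suppForcingterms1} and the uniform $L^\infty(0,T; H^3)$ bound for $\theta^1$.

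For the velocity the same idea applies but the pressure together with the constraints $\nabla \cdot u^1 = 0$ and $u^1 \cdot \nu = 0$ demand more care. The key algebraic input from Lemma \ref{exis_sol_u0}, namely $\nabla \times u^0 = 0$, allows me to take the curl of the momentum equation so as to eliminate $\nabla p^1$, producing a transport-type equation of the schematic form $\partial_t \omega^1 + (u^0 \cdot \nabla)\omega^1 + \mathcal{L}(\nabla u^0, \sigma^0)\, \omega^1 = \nabla \times v^1$ for the vorticity $\omega^1 := \nabla \times u^1$. The same characteristic construction used for $\theta^1$ then produces a $v^1$ supported in $\overline{\mathcal{O}} \setminus \overline{\Omega}$ driving $\omega^1(T, \cdot)$ to zero; combined with $\nabla \cdot u^1(T, \cdot) = 0$ and $u^1(T, \cdot) \cdot \nu = 0$, the irrotationality at time $T$ determines $u^1(T, \cdot)$ up to a finite-dimensional space of tangent harmonic vector fields on $\mathcal{O}$, which can be annihilated by a finite-dimensional correction of $v^1$ still supported in $\overline{\mathcal{O}} \setminus \overline{\Omega}$.

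The main obstacle I expect is propagating the $H^3$ regularity through this Lagrangian construction while simultaneously enforcing the divergence-free and tangency conditions and respecting the support constraint \eqref{eq:suppForcingterms2}; the finite-dimensional harmonic correction in particular has to be arranged without spilling $v^1$ into $\overline{\Omega}$. All of these points are however standard within the return-method framework and can be carried out essentially as in the Navier--Stokes construction of \cite{coron_marbach}.
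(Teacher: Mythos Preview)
Your argument is correct and, for the velocity part, essentially coincides with the paper's: the authors simply quote \cite[Lemma~3]{coron_marbach}, which is precisely the vorticity-plus-harmonic-correction scheme you outline.

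For the temperature your construction differs from the paper's. You use a single global weight $\rho$ and write $w^1(t,y)=-\rho(t,y)\,\theta_*(\Phi^0(0;t,y))/N(\Phi^0(0;t,y))$, so that the control acts continuously along each characteristic. The paper instead uses a finite partition of unity $\{\eta_\ell\}$ adapted to the flushing time windows $(t_\ell-\varepsilon,t_\ell+\varepsilon)$ of \eqref{particao_unidade}: each piece $\overline\theta_\ell$ solves the free transport equation with datum $\eta_\ell\theta_*$, and one sets $\theta_\ell=\beta(t-t_\ell)\,\overline\theta_\ell$ with a fixed cutoff $\beta$, so that $w_\ell=\beta'(t-t_\ell)\,\overline\theta_\ell$ is supported in $Q_{m_\ell}\subset\overline{\mathcal O}\setminus\overline\Omega$; then $\theta^1=\sum_\ell\theta_\ell$ and $w^1=\sum_\ell w_\ell$. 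Your formula is more compact and avoids the partition of unity; the paper's version makes the spatial regularity of $w^1$ and $\theta^1$ immediate (it is exactly that of $\overline\theta_\ell$, hence of $\theta_*$), whereas in your approach one must additionally check that composition with the backward flow $\Phi^0(0;t,\cdot)$ and multiplication by the smooth factor $\rho/N\circ\Phi^0(0;t,\cdot)$ preserve $H^3$ uniformly in $t$. Both routes rely on the same flushing mechanism and give the stated bounds.
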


\begin{proof}
First, note that the result for $u^1$ is proved in \cite[Lemma 3]{coron_marbach}. 

For $\te^1$, we have a similar situation and we can apply the same arguments. For completeness, let us sketch the main ideas. We will use the smooth partition of unity $\eta_\ell$  for $1\leq \ell\leq L$ defined in \cite[Appendix A]{coron_marbach} which is related to $\Phi^0$ as follows: 
 thanks to \eqref{prop_flow}, we can find $\varepsilon>0$ and balls $B_\ell$ for $1\leq \ell\leq L$ covering $\overline{\Oo}$ such that
\begin{equation}\label{particao_unidade}
\forall \ell, \ \exists t_\ell\in (\varepsilon, T-\varepsilon), \  \exists m_\ell\in \{1, 
\cdots,\mathcal{M}\}\quad\hbox{such that}\quad \Phi^0(s;0,B_\ell)\subset Q_{m_\ell}\quad \forall s\in (t_\ell-\varepsilon, t_\ell+\varepsilon), 
\end{equation}
where the $Q_{m_\ell}$ are squares (or cubes) that never intersect $\overline{\Omega}$; hence, every ball spends a positive amount of time within a given square (cube) where we can use a localized control to act on the  $\te^1$ profile. Here, it is assumed that the $\eta_\ell$ satisfy  $0\leq \eta_\ell(x)\leq 1$, $\sum\eta_\ell=1$ and $supp(\eta_\ell)\subset B_\ell$.\\
\indent Let us introduce a smooth function $\beta: \R\to [0,1]$ with $\beta=1$ on $(-\infty, -\varepsilon)$ and $\beta=0$ on~$(\varepsilon, +\infty)$. 

For each $\ell$, we consider the solution $\overline{\te}_\ell$ to
\begin{equation*}
    \left\{
    \begin{array}{lll}
    		\partial_t\overline{\te}_\ell+u^0\cdot \nabla \overline{\te}_\ell=0& \text{in}& (0,T)\times\overline{\mathcal{O}},\\
    	\noalign{\smallskip}\dis
    		\overline{\te}_\ell(0,\cdot\,)=\eta_\ell\theta_*& \text{in}& \overline{\mathcal{O}},
    \end{array}
    \right.
\end{equation*}
and we set $\te_\ell(t,x):=\beta(t-t_\ell)\overline{\te}_\ell(t,x)$. Since $\beta(T-t_\ell)=0$ and $\beta(-t_\ell)=1$, $\te_\ell$ solves 
\begin{equation*}
    \left\{
    \begin{array}{lll}
    		\partial_t\te_\ell+u^0\cdot \nabla \te_\ell=w_\ell& \text{in}& (0,T)\times\overline{\mathcal{O}},\\
    	\noalign{\smallskip}\dis
    		\te_\ell(0,\cdot\,)=\eta_\ell\theta_*,\quad \te_\ell(T,\cdot\,)=0& \text{in}& \overline{\mathcal{O}},
    \end{array}
    \right.
\end{equation*}
where $w_\ell(t,x):=\beta^\prime(t-t_\ell)\overline{\te}_\ell$. Thanks to \eqref{particao_unidade}, since $\beta^\prime$ vanish outside $(-\varepsilon,\varepsilon)$, it is easy to see that $w_\ell$ is supported in $Q_{m_\ell}$. 

At this point, we take 
\begin{equation*}
\dis\te^1:=\sum_\ell \te_\ell \quad \hbox{and}\quad w^1:=\sum_\ell w_\ell
\end{equation*}
and we see that the second PDE and the second initial condition in \eqref{u1_equa} are satisfied. Thanks to this explicit construction, the spatial regularity of $w^1$ and  $\overline{\te}_\ell$ are the same. Then, $w^1\in C^1([0,T],H^2(\Oo))\cap C^0([0,T], H^3(\Oo))$. The fact that $\theta^1$ is bounded in $L^{\infty}(0,T; H^3(\mathcal{O}))$ readily comes from the fact that each $\theta_\ell$ is bounded in $L^{\infty}(0,T; H^3(\mathcal{O}))$. This ends the proof.

\end{proof}

Lemma \ref{exist_sol_u1} is a null-controllability result. Thanks to the linearity and reversibility of \eqref{u1_equa}, it leads  to an  exact controllability result:
\begin{lema}
\label{lem:constructionu1theta^1}
Let us assume that $(u_*,\theta_*), (u_T,\theta_T)\in [H^3(\mathcal{O})^n\cap L^2_{div}(\mathcal{O})^n]\times H^3(\Oo)$. There exist $v^1$ and $w^1$ as in \eqref{eq:suppForcingterms1} and \eqref{eq:suppForcingterms2} such that the associated solution  to \eqref{u1_equa} satisfies $(u^1, \theta^1)(T, \cdot\,) =(u_T, \theta_T)$. Moreover, $(u^1, \theta^1)$ is bounded (with respect to $\varepsilon$) in $L^\infty(0,T; H^3(\Oo)^n)\times L^\infty(0,T; H^3(\Oo))$. 
\end{lema}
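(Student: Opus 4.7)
The plan is to derive this exact controllability statement from the null-controllability of Lemma \ref{exist_sol_u1} by splitting $[0,T]$ into two halves and exploiting the linearity and time-reversibility of \eqref{u1_equa}. First I would refine the choice of reference flow $u^0$ in Lemma \ref{exis_sol_u0} so that every point of $\overline{\Oo}$ is flushed out of $\overline{\Omega}$ both forward from time $0$ within $(0, T/2)$ \emph{and} backward from time $T$ within $(T/2, T)$; such a $u^0$ is easily produced, for instance by superposing two flushing flows with disjoint time supports in $(0, T/4)$ and $(3T/4, T)$, the construction in \cite{coron_marbach, glass_3D} being flexible enough to accommodate this extra condition.

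On $[0, T/2]$, I apply Lemma \ref{exist_sol_u1} with $T/2$ in place of $T$ to obtain forcings $(v_a^1, w_a^1)$ satisfying \eqref{eq:suppForcingterms1}--\eqref{eq:suppForcingterms2} and an associated solution $(u_a^1, \theta_a^1)$ of \eqref{u1_equa} starting at $(u_*, \theta_*)$ and reaching $(0,0)$ at $t = T/2$. For the second half $[T/2, T]$, I reduce the problem of driving $(0, 0)$ to $(u_T, \theta_T)$ to another null-controllability problem by time reversal. Setting $\tilde u^1(s, x) := u^1_b(T-s, x)$, $\tilde\theta^1(s, x) := \theta^1_b(T-s, x)$, $\tilde u^0(s, x) := -u^0(T-s, x)$, $\tilde p^1(s, x) := -p^1_b(T-s, x)$, $\tilde v^1(s, x) := -v^1_b(T-s, x)$ and $\tilde w^1(s, x) := -w^1_b(T-s, x)$, a direct computation shows that $(\tilde u^1, \tilde \theta^1)$ satisfies on $[0, T/2]$ a system of the same form as \eqref{u1_equa} with reference flow $\tilde u^0$, sources $(\tilde v^1, \tilde w^1)$, initial data $(u_T, \theta_T)$ and boundary conditions of the same type. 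Since the flow of $\tilde u^0$ is precisely the backward-in-time flow of $u^0$, the bi-directional flushing property chosen above transfers to $\tilde u^0$ on $(0, T/2)$; the analogues of \eqref{prop_u0} are trivially preserved because $\tilde\theta^0 = \tilde w^0 = 0$ and $\nabla \times \tilde u^0 = -(\nabla \times u^0)(T-\cdot) \equiv 0$. Lemma \ref{exist_sol_u1} applied to this reversed system drives $(\tilde u^1, \tilde\theta^1)$ from $(u_T, \theta_T)$ to $(0, 0)$, and reversing time back yields $(u^1_b, \theta^1_b)$ on $[T/2, T]$ going from $(0, 0)$ to $(u_T, \theta_T)$ together with controls $(v^1_b, w^1_b)$ in the required classes.

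The final step is to concatenate, setting $(u^1, \theta^1) := (u_a^1, \theta_a^1)$ on $[0, T/2]$ and $(u_b^1, \theta_b^1)$ on $[T/2, T]$, and analogously for $(v^1, w^1)$. Because the controls provided by Lemma \ref{exist_sol_u1} are built from cutoffs $\beta(t - t_\ell)$ with $t_\ell$ strictly interior to the control interval, both pieces and their forcings can be arranged to vanish identically in a full neighborhood of $t = T/2$; hence the concatenation is smooth at the junction, lies in the function spaces of \eqref{eq:suppForcingterms1}, respects the support condition \eqref{eq:suppForcingterms2}, and inherits the uniform $L^\infty(0, T; H^3(\Oo)^n) \times L^\infty(0, T; H^3(\Oo))$ bound from the bounds on each half. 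The principal obstacle is ensuring that the time-reversed reference flow $\tilde u^0$ still satisfies the flushing hypothesis required to invoke Lemma \ref{exist_sol_u1} on $[0, T/2]$; this is handled up front by the choice of a bi-directionally flushing $u^0$, after which the time-reversal algebra, the gluing, and the inheritance of regularity and bounds are routine.
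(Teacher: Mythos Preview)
Your proof is correct and captures the two ideas the paper invokes in its one-line justification---linearity and time-reversibility of \eqref{u1_equa}---but you have made the argument more elaborate than necessary.

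The splitting of $[0,T]$ into halves and the attendant modification of $u^0$ into a ``bi-directionally flushing'' flow can be avoided entirely. Work on the full interval and use linearity to superpose: let $(u^1_a,\theta^1_a)$ be the Lemma~\ref{exist_sol_u1} solution driving $(u_*,\theta_*)$ to $(0,0)$ at time $T$, and let $(u^1_b,\theta^1_b)$ be a solution on $[0,T]$ going from $(0,0)$ to $(u_T,\theta_T)$, obtained by time-reversing a null-control for the system with reference flow $\tilde u^0(s,\cdot)=-u^0(T-s,\cdot)$ started from $(u_T,\theta_T)$. The sum $(u^1_a+u^1_b,\theta^1_a+\theta^1_b)$ solves \eqref{u1_equa} with controls $(v^1_a+v^1_b,w^1_a+w^1_b)$ and has the right endpoint values. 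The point you worried about---whether $\tilde u^0$ still flushes---is automatic on the full interval: the flow of $\tilde u^0$ satisfies $\tilde\Phi(s;0,x)=\Phi^0(T-s;T,x)$, and for any $x$ the particle at $x$ at time $T$ was at some $y=\Phi^0(0;T,x)$ at time $0$; by the original flushing \eqref{prop_flow} there is $t_y\in(0,T)$ with $\Phi^0(t_y;0,y)=\Phi^0(t_y;T,x)\notin\overline{\Omega}$, so $\tilde\Phi(T-t_y;0,x)\notin\overline{\Omega}$. Thus Lemma~\ref{exist_sol_u1} applies to $\tilde u^0$ without any extra hypothesis on $u^0$, and no concatenation is needed. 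Your two-half construction is a legitimate workaround, but it changes $u^0$ upstream (which, strictly speaking, was already fixed by Lemma~\ref{exis_sol_u0}) to solve a difficulty that is not actually present.
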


\subsubsection{Equations and estimates for the remainder}
	The equations for $r^\varepsilon$, $\pi^\varepsilon$ and $q^\varepsilon$ in the extended domain $\mathcal{O}$ are
\begin{equation}\label{remain_equa}
    \left\{
    \begin{array}{lll}
    		\partial_t r^\varepsilon-\varepsilon\Delta r^\varepsilon+(u^\varepsilon\cdot\nabla)r^\varepsilon+\nabla \pi^\varepsilon =  f^\varepsilon-A^\varepsilon r^\varepsilon+\varepsilon q^\varepsilon e_n+\varepsilon\theta^1e_n&\hbox{in}& \mathcal{O}_T,\\
    	\noalign{\smallskip}\dis
    		\partial_t q^{\varepsilon}-\varepsilon\Delta q^\varepsilon+u^\varepsilon\cdot \nabla q^\varepsilon= h^\varepsilon- B^\varepsilon r^\varepsilon&\hbox{in}& \mathcal{O}_T,\\
    	\noalign{\smallskip}\dis
    		\nabla\cdot r^\varepsilon=0&\hbox{in}& \mathcal{O}_T,\\
    	\noalign{\smallskip}\dis
    		r^\varepsilon\cdot \nu=0, \quad [	\nabla\times r^\varepsilon ]_{tan}=-[ \nabla\times u^1 ]_{tan}&\hbox{on}& \varLambda_T,\\
    	\noalign{\smallskip}\dis
    	R(q^\varepsilon)=-R(\theta^1)&\hbox{on}&\varLambda_T,\\
    	\noalign{\smallskip}\dis
    		r^\varepsilon(0,\cdot\,)=0, \quad q^\varepsilon(0,\cdot\,)=0&\hbox{in}& \mathcal{O},
    \end{array}
    \right.
\end{equation}
where we have introduced
\begin{eqnarray}
f^\varepsilon:=\varepsilon\Delta u^1-\varepsilon(u^1\cdot\nabla)u^1,&
A^\varepsilon r^\varepsilon:=(r^\varepsilon\cdot\nabla)(u^0+\varepsilon u^1),\\
	\noalign{\smallskip}\dis
	h^\varepsilon:=	\varepsilon\Delta\theta^1- \varepsilon u^1\cdot\nabla \theta^1,&
	B^\varepsilon r^\varepsilon:=\varepsilon r^\varepsilon\cdot\nabla\theta^1.\label{B_slip}
\end{eqnarray}
	We can establish energy estimates for the remainder by multiplying  \eqref{remain_equa}$_1$ by $r^\varepsilon$ and \eqref{remain_equa}$_2$ by $q^\varepsilon$. Indeed,  after integration by parts, and thanks to the interpolation 
 inequality in \cite[Theorem $III.2.36$]{boyer}), we easily obtain the following estimates
\begin{equation}\label{eq:estiqpartialq}
-\int_{\partial\Oo}q^\varepsilon \dfrac{\partial q^\varepsilon}{\partial \nu} \,d \Gamma=
\int_{\partial\Oo}m|q^\varepsilon|^2 \,d \Gamma+\int_{\partial\Oo} q^\varepsilon R(\theta^1)\,d \Gamma,
\end{equation}
\begin{equation}\label{eq:estiqpartialq2}
\left|\int_{\partial\Oo}q^\varepsilon R(\theta^1)\,d \Gamma\right|
\leq \|q^\varepsilon\|_{L^2(\partial \Oo)}\left\| R(\theta^1)\right\|_{L^2(\partial \Oo)}
\leq C \|q^\varepsilon\|_{H^1}\left\|  \theta^1 \right\|_{H^2},
\end{equation}
\begin{equation}\label{eq:estiqpartialq3}
\left|\int_{\partial\Oo}m|q^\varepsilon|^2 \,d \Gamma\right|
\leq C \|q^\varepsilon\|_{L^2}\|q^\varepsilon\|_{H^1}
\end{equation}
 and   
\begin{align}
&\dfrac{d}{dt}(\|r^\varepsilon\|^2+\|q^\varepsilon\|^2)+\varepsilon(\|\nabla\!\times r^\varepsilon\|^2+\|\nabla q^\varepsilon\|^2)\notag\\
&\leq
C(\varepsilon+\|\sigma^0\|_{L^\infty}+\|f^\varepsilon\|+\|A^\varepsilon\|_{L^\infty}+\|B^\varepsilon\|_{L^\infty}+\|h^\varepsilon\|)(\|r^\varepsilon\|^2+\|q^\varepsilon\|^2)\notag\\
&\qquad+(2\varepsilon\|u^1\|^2_{H^2}+\|f^\varepsilon\|+C\varepsilon\|\theta^1\|^2_{H^2}+\|h^\varepsilon\|),\label{eq:BeforeGronwallNoSlip}
\end{align}
	where the boundary term for $r^\varepsilon$ is bounded in a similar way as in \cite[Section $2.5$]{coron_marbach}.

By applying Gronwall's inequality and Lemma \ref{exist_sol_u1}, we deduce that
\begin{equation}\label{gron_re_qe}
\|r^\varepsilon\|^2_{L^\infty(L^2)}+\|q^\varepsilon\|^2_{L^\infty(L^2)}+\varepsilon\left(\|\nabla\times r^\varepsilon\|^2+\|\nabla q^\varepsilon\|^2\right)=O(\varepsilon).
\end{equation}
Consequently, at  time $T$, since $(u^0, \theta^0)(T,\cdot\,) =(u^1, \theta^1)(T,\cdot\,) = (0,0)$, we find: 
$$ \|u^{\varepsilon}(T, \cdot\,)\| \leq \|\varepsilon r^{\varepsilon}(T, \cdot\,)\| \leq O(\varepsilon^{3/2})
\quad 
\text{and} \quad  \|\theta^{\varepsilon}(T, \cdot\,)\| \leq \|\varepsilon^2 q^{\varepsilon}(T, \cdot\,)\| \leq O(\varepsilon^{5/2}).  $$ 
This concludes the proof of Proposition \ref{lemma_trajec_aprox} in the slip boundary condition case.

\begin{remark} \rm
\label{remark:dirichlet}
In the previous proof, we have used in a crucial way the homogeneous Robin boundary conditions satisfied by $\theta^{\varepsilon}$. Indeed, we have used \eqref{eq:estiqpartialq}, among others.  Contrarily, with homogeneous Dirichlet boundary conditions on $q^\varepsilon$, we 	have
\begin{equation}
\label{eq:estiqpartialqDir}
\left|\int_{\partial\Oo}q^\varepsilon \dfrac{\partial q^\varepsilon}{\partial \nu}d \Gamma\right|=\left|\int_{\partial\Oo}\theta^1 \dfrac{\partial q^\varepsilon}{\partial \nu}d \Gamma\right|
\leq C \|q^\varepsilon\|_{H^2}\|  \theta^1 \|_{H^1}.
\end{equation}
But unfortunately, the norm $\|q^\varepsilon\|_{H^2}$ cannot be absorbed by the left hand side of \eqref{eq:BeforeGronwallNoSlip}.
\end{remark}

\subsection{The case of Navier slip-with-friction boundary conditions}\label{boundary_layer}

We come back to the general case, i.e. Navier slip-with-friction boundary conditions. 

\subsubsection{Ansatz with correction term}

Let us introduce a smooth function
	$\varphi: \R^n\mapsto\R$  such that $\varphi=0$ on $\partial \Oo$, $\varphi>0$ in $\Oo$, $\varphi<0$ outside of $\overline{\Oo}$ and  $|\varphi(x)|=dist(x,\partial \Oo)$ in a small neighborhood of $\partial \Oo$. Then, $\nu=-\nabla \varphi$ near $\partial \Oo$ and $\nu$ can be extended smoothly within the full domain $\Oo$. 
	
Following the original boundary layer expansion 
	for Navier slip-with-friction boundary conditions proved in \cite{iftimie_Sueur} by Iftimie and Sueur, we introduce the following expansions of the variables and the forcing terms: 
		\begin{equation}\label{expan}
		\left\{
  	 	 \begin{array}{lll}
			\ue(t,x)&=&u^0(t,x)+\sqrt{\varepsilon }\rho\left(  t,x,\dfrac{\varphi(x)}{\sqrt{\varepsilon}}\right)  +\varepsilon u^1(t,x)+\dots+\varepsilon r^{\varepsilon}(t,x),\\
		\noalign{\smallskip}\dis
			p^{\varepsilon}(t,x)&=&p^0(t,x)+\varepsilon p^1(t,x)+\dots+\varepsilon \pi^{\varepsilon}(t,x),\\
		\noalign{\smallskip}\dis
			\te^{\varepsilon}(t,x)&=&\te^0(t,x)+\varepsilon^2 \te^1(t,x)+\varepsilon^2 q^{\varepsilon}(t,x),
		\end{array}\right.
		\end{equation}
		\begin{equation*}
		\left\{
   		 \begin{array}{lll}
		v^{\varepsilon}(t,x)&=&v^0(t,x)+\sqrt{\varepsilon }v^\rho\left(  t,x,\dfrac{\varphi(x)}{\sqrt{\varepsilon}}\right) +\varepsilon v_1^1(t,x),\\
		\noalign{\smallskip}\dis
			w^{\varepsilon}(t,x)&=&w^0(t,x)+\varepsilon^2 w^1(t,x),\\
		\noalign{\smallskip}\dis
			\sigma^{\varepsilon}(t,x)&=&\sigma^0(t,x).
		\end{array}\right.
	\end{equation*}

 
    Compared to the previous expansion \eqref{expans}, since $u^0$ cannot satisfy the Navier slip-with-friction boundary condition on $\partial\Oo$, the expansion \eqref{expan} introduces a boundary correction $\rho$. This profile is expressed in terms of both the slow space variable $x \in \Oo$ and a fast scalar variable $z=\varphi(x)/\sqrt{\varepsilon}$. In the equations of \eqref{expan}, the missing terms 
    will help us to prove that the remainder is small; the details are given in Section \ref{sec_est_rem}. We use the profiles $(u^0, \theta^0)$ and $(u^1, \theta^1)$ (extended by zero for $t>T$) introduced in the previous sections, see Sections \ref{sec:inviscid} and \ref{sec:flushing}. 
	The following sections are devoted to analyze and estimate    the terms of the expansion in \eqref{expan}.
	
	The boundary layer corrector will be given as the solution to an initial boundary value problem with a boundary condition associated to the extra variable.
	As in \cite{iftimie_Sueur}, the  boundary layer correction will be described 	by a tangential vector field $\rho=\rho(t,x,z)$ satisfying the equation:
	\begin{equation}\label{layer_equation}
		\left\{
	    \begin{array}{lll}
    			\partial_t \rho+[(u^0\cdot\nabla)\rho+(\rho\cdot\nabla)u^0]_{tan}+u_\flat^0z\partial_z\rho-\partial_{zz}\rho = v^\rho&\hbox{in}& \R_+\times\overline{\Oo}\times\R_+,\\
    		\noalign{\smallskip}\dis
    			\partial_z \rho(t,x,0)=g^0(t,x)&\hbox{in}& \R_+\times\overline{\Oo},\\
    		\noalign{\smallskip}\dis
    			\rho(0,x,z)=0&\hbox{in}& \overline{\Oo}\times \R_+,
  	  \end{array}
 	   \right.
	\end{equation}
	where we have used the following notation:
	\begin{eqnarray}
    		u^0_\flat(t,x):= -\dfrac{u^0(t,x)\cdot \nu(x)}{\varphi(x)} &\hbox{in}& \R_+\times\Oo,\label{layer_cond_1}\\
    	\noalign{\smallskip}\dis
    		g^0(t,x):=2\chi(x)N(u^0)(t,x)&\hbox{in}& \R_+\times\Oo,\label{layer_cond_2}
\end{eqnarray}
	with a smooth cut-off function $\chi$ satisfying $\chi = 1$ in a neighbourhood of the boundary $\partial\Oo$.

\indent We can formally obtain \eqref{layer_equation} by plugging the expansion $u^0 + \sqrt{\varepsilon}\rho(t,x, \varphi(x)/\sqrt{\varepsilon})$ into 
\eqref{equa_u_layer_} and keeping the terms of order $\sqrt{\varepsilon}$.\\
\indent 
The following points are in order:
\begin{itemize}
\item 
$v^\rho$ must be viewed as a smooth control whose spatial support is located outside of $\overline{\Omega}$. With the help of the transport term, this control will enable us to modify the behavior of $\rho$ inside the physical domain $\Omega$.

\item 
$\rho$ depends on $n + 1$ spatial variables ($n$ slow variables $x_i$ and one fast variable $z$); it is thus not set in curvilinear coordinates. It is implicitly  assumed that $\nu$ actually refers to the extension $-\nabla \varphi$ of the normal and, in turn, this furnishes extensions of the identities in \eqref{def_N}.

\item We will check that the construction above satisfies $v^\rho\cdot \nu=0$. Since the equation is linear, it preserves the relation $\rho(0,x,z)\cdot \nu(x)=0$ at initial time. Thus, the boundary profile will be tangential, even inside the domain. Actually, this is the reason why the equation \eqref{layer_equation} is linear; 
see \cite[Section 2]{iftimie_Sueur} for more details.

\item In \eqref{layer_cond_2}, the role of the function $\chi$ is to ensure that $\rho$ is compactly supported near $\partial\Oo$.

\item Since $u^0$ is smooth and tangent to the boundary, a Taylor expansion proves that $u^0_\flat$ is smooth in $\overline{\Oo}$.

\item The boundary layer profile $\rho$ does not depend on $\varepsilon$.
\end{itemize}

\subsubsection{Well-prepared dissipation method}

	Unlike in the previous section, where $T$ is the fixed time control, we will use here virtually long time intervals $[0,T/\varepsilon]$ to dissipate the boundary layer.\\
	\indent The most natural strategy would be to use that $u^0$ is equal to $0$ after time $T$. Then \eqref{layer_equation} would be reduced to a heat equation posed on the half line $\R^+$ with homogeneous Neumann boundary conditions and the boundary layer would decay. Unfortunately, this decay is too slow: one can prove that $\sqrt{\varepsilon} \rho(T/\varepsilon,\cdot\,, \varphi(\,\cdot\,)/\sqrt{\varepsilon}) = O(\varepsilon)$, see \cite[Section 3.2]{coron_marbach}.
	Therefore, by dividing by $\varepsilon$, $u(T,\cdot\,) = O(1)$ and this is not enough for using the local result at the end.\\
	\indent This is why we use the source $v^{\rho}$ to prepare the dissipation of the boundary layer.\\
	\indent Let us define the following weighted Sobolev spaces
	\begin{equation}
	    H^{s, k}(\R) := \left\{f \in H^s(\R)\ ;\ \sum_{|\alpha| = 0}^{s} \int_{\R} (1+|z|^2)^k |\partial^{\alpha} f(z)|^2 dz < +\infty\right\},
	\end{equation}
	endowed with the corresponding (natural) norms. In \cite[Lemma $7$]{coron_marbach}, the following result is proved:
\begin{lema}\label{lemma_limita_boundary_layer}
Let $k\geq 1$ and $u^0 \in C^\infty([0,T]\times \overline{\Oo})$ be a fixed reference flow in \eqref{solu_line}. 
There exists $v^\rho \in C^\infty(\R_+\times\overline{\Oo}\times\R_+)$ with $v^\rho\cdot \nu=0$, such that the $x$-support is included in 
$\overline{\Oo}\setminus \overline{\Omega}$, the time support is compact in $(0,T)$ and, for any $s, p \in \NN$ and any $0\leq m\leq k$, the associated boundary layer profile satisfies:
\begin{equation}\label{est_lemma_rho}
|\rho(t,\cdot\,,\cdot\,)|_{H^p_x(H^{s,m}_z)}\leq C \left| \dfrac{\log(2+t)}{2+t}\right|^{\frac{1}{4}+\frac{k}{2}-\frac{m}{2}},
\end{equation}
where the positive constant $C$ depends on $p$, $s$, $m$ and $u^0$ but not on $t$.
\end{lema}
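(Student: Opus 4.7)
The plan is to follow the well-prepared dissipation method of Marbach, exactly as in the proof of \cite[Lemma 7]{coron_marbach}, since the boundary-layer equation \eqref{layer_equation} is identical to the one arising in the Navier--Stokes setting: the profile $\rho$ is independent of the temperature (observe that \eqref{layer_equation} involves only $u^0$, $u^0_\flat$ and $g^0 = 2\chi N(u^0)$, and by \eqref{prop_u0} we have $\theta^0 \equiv 0$), so the coupling with $\theta$ does not enter the boundary-layer analysis.

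The proof splits into two phases. In the \emph{active phase} $t \in [0,T]$, I would design $v^\rho$ so that at time $T$ the profile $\rho(T,x,\cdot)$ (after even extension in $z$) has its first $2k$ moments in the fast variable $z$ vanishing, pointwise in $x$. To this end, for each integer $j \geq 0$ one tests \eqref{layer_equation} against $z^j$ on $\R_+$: after integration by parts (using the Neumann condition $\partial_z\rho(\cdot,\cdot,0)=g^0$), the moment $M_j(t,x):=\int_0^\infty z^j\rho(t,x,z)\,dz$ satisfies a transport--reaction equation driven by $u^0$, with source terms involving the lower-order moments, the boundary data $g^0$ and $V_j(t,x):=\int_0^\infty z^j v^\rho(t,x,z)\,dz$. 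Using the flushing property \eqref{prop_flow} of the flow of $u^0$, exactly the same localized-control construction used for Lemma~\ref{exist_sol_u1} produces, by induction on $j$, a control $V_j$ supported in $\overline{\mathcal{O}}\setminus\overline{\Omega}$ that forces $M_j(T,\cdot)=0$; one then assembles the $V_j$'s as the moments of a single smooth $v^\rho$ which is tangent to $\partial\mathcal{O}$, compactly time-supported in $(0,T)$, and spatially supported outside $\overline{\Omega}$.

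In the \emph{passive phase} $t \geq T$, one has $u^0 = 0$, hence $u^0_\flat = 0$ and $g^0 = 0$, so \eqref{layer_equation} degenerates to the 1D heat equation $\partial_t\rho - \partial_{zz}\rho = 0$ on $\R_+$ with homogeneous Neumann boundary condition, $x$ playing the role of a parameter. Reflecting evenly across $z=0$, solving via the Gaussian kernel, and exploiting the $2k$ vanishing moments prepared at time $T$, a direct computation on the Fourier side gives, for any $s,p \in \NN$ and $0 \leq m \leq k$,
\begin{equation*}
|\rho(t,\cdot,\cdot)|_{H^p_x(H^{s,m}_z)} \leq C\,(t-T+1)^{-(1/4+k/2-m/2)}.
\end{equation*}
The logarithmic correction in \eqref{est_lemma_rho} then comes from the standard device of letting the number of vanishing moments grow like $k(t) \sim \log(2+t)$ and optimizing.

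The main obstacle will be the bookkeeping in the moment construction: one must simultaneously preserve $v^\rho\cdot\nu=0$ (so that $\rho$ remains tangential and \eqref{layer_equation} stays linear), keep $v^\rho$ supported away from $\Omega$, obtain uniform $H^p_x$ bounds on every $M_j$ via the regularity theory of the transport system, and run the induction on $j$ so that lower-order moment errors are absorbed at each step. Once these details are handled as in the Navier--Stokes case of \cite{coron_marbach}, our $\rho$ coincides with the boundary-layer profile of that paper and Lemma~\ref{lemma_limita_boundary_layer} follows.
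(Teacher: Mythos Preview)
Your proposal is correct and coincides with the paper's treatment: the paper does not reprove this lemma but simply invokes \cite[Lemma~7]{coron_marbach}, and your key observation---that \eqref{layer_equation} depends only on $u^0$ (since $\theta^0\equiv 0$ by \eqref{prop_u0}) and is therefore identical to the Navier--Stokes boundary-layer equation---is exactly why the citation applies verbatim.

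One small correction to your sketch of the argument from \cite{coron_marbach}: the logarithmic factor in \eqref{est_lemma_rho} is \emph{not} obtained by letting the number of vanishing moments grow like $\log(2+t)$; here $k$ is fixed from the outset and the control $v^\rho$ is chosen once and for all on $(0,T)$. In \cite{coron_marbach} the logarithm arises instead from the interaction of the polynomial weight $(1+z^2)^m$ with the heat semigroup in the passive phase, combined with the fact that $\rho(T,x,\cdot)$, while rapidly decaying, is not compactly supported in $z$. Your intermediate claim of a pure power bound $(t-T+1)^{-(1/4+k/2-m/2)}$ in the weighted norm $H^{s,m}_z$ is slightly too optimistic for this reason.
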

The interest of Lemma \ref{lemma_limita_boundary_layer} is twofold.
\begin{itemize}
    \item 
    The estimate \eqref{est_lemma_rho} will be used to show that the source terms generated by the boundary layer are integrable in long time and the equation satisfied by the remainder term is well-posed.
    \item It will be also used to prove that the boundary layer is sufficiently small at time $T/\varepsilon$.
\end{itemize}

\begin{remark} \rm
    A more ambitious idea would be to design a control strategy to get exactly\break $\rho(T, \cdot, \varphi(\cdot\,)/\sqrt{\varepsilon}) \equiv 0$. But, unfortunately, it can be proved that  \eqref{layer_equation} is not  null-controllable at time $T$, see \cite[Section $3.5$]{coron_marbach}.
\end{remark}

\subsubsection{Technical profiles}\label{est_ramainder}

	\indent For a function $f=f(t,x,z)$, we will use the notation $\{f\}$ to denote its values at points $(t,x,z)$ with $z=\varphi(x)/\sqrt{\varepsilon}$. The full decomposition will be the following
\begin{equation}\label{full_expan}
\begin{array}{ccl}
	 u^\varepsilon&=&u^0+\sqrt{\varepsilon}\{\rho\}+\varepsilon u^1+\varepsilon\nabla \zeta^\varepsilon+\varepsilon\{\beta\}+\varepsilon r^\varepsilon,\\
    		\noalign{\smallskip}\dis
	p^\varepsilon&=&p^0+\varepsilon\{\psi\}+\varepsilon p^1+\varepsilon \mu^\varepsilon+\varepsilon\pi^\varepsilon,\\
    		\noalign{\smallskip}\dis
    		\te^{\varepsilon}&=&\te^0+\varepsilon^2 \te^1+\varepsilon^2 q^{\varepsilon},\\
    		\noalign{\smallskip}\dis
    	v^{\varepsilon}&=&v^0+\sqrt{\varepsilon }\{v^\rho\} +\varepsilon v^1,\\
    	    	\noalign{\smallskip}\dis
    	w^{\varepsilon}&=&w^0+\varepsilon^2 w^1,\\
    	\noalign{\smallskip}\dis
	\sigma^{\varepsilon}&=&\sigma^0.
\end{array}
\end{equation}

The functions $\beta$, $\zeta^\varepsilon$ and $\psi$ are given as follows: 
\begin{equation}\label{equation_beta}
		\beta(t,x,z)=-2e^{-z}N(\rho)(t,x,0)-\nu(x)\int_z^{+\infty}\nabla_x\cdot \rho(t,x,z^\prime)dz^\prime,
\end{equation}

\begin{equation}\label{fusca}
	\left\{
    \begin{array}{lll}
    		\Delta \zeta^\varepsilon= -\{\nabla\cdot \beta\}&\hbox{in}& \Oo,\\
    	\noalign{\smallskip}\dis
    		\partial_\nu\zeta^\varepsilon=-\beta(t,\,\cdot\,,0)\cdot \nu&\hbox{on}& \partial\Oo,
    \end{array}
    \right.
\end{equation}

\begin{equation}\label{tang_part_layer}
\psi=\psi(t,x,z)~\,\text{satisfies}\,~[(u^0\cdot \nabla)\rho+(\rho\cdot\nabla)u^0]\cdot \nu=\partial_z\psi \,~\text{and}~\,\psi(t,x,z)\longrightarrow0\quad\text{as} \quad z\longrightarrow +\infty.
\end{equation}

It is proved in \cite[Section $4.2$]{coron_marbach} that the definitions \eqref{equation_beta}, \eqref{fusca} and \eqref{tang_part_layer} are compatible with \eqref{equa_u_layer_} and, furthermore, the following estimates hold:

\begin{align}\label{est_tec_profile_1}
\|\beta(t,\cdot,\cdot\,)\|_{H_x^p(H^{s,k}_z)}&\leq C\|\rho(t,\cdot,\cdot\,)\|_{H_x^{p+1}(H_z^{s+1,k+2})},\\
\|\zeta^\varepsilon(t,\cdot\,)\|_{H^4}&\leq C\left(\varepsilon^{-3/4}\|\beta(t,\cdot\,,\cdot\,)\|_{H_x^{4}(H_z^{2,0})}+\|\rho(t,\cdot\,,\cdot\,)\|_{H_x^{3}(H_z^{0,1})}\right),\\
\|\zeta^\varepsilon(t,\cdot\,)\|_{H^3}&\leq C\left(\varepsilon^{-1/4}\|\beta(t,\cdot\,,\cdot\,)\|_{H_x^{3}(H_z^{1,0})}+\|\rho(t,\cdot\,,\cdot\,)\|_{H_x^{2}(H_z^{0,1})}\right),\\
\|\zeta^\varepsilon(t,\cdot\,)\|_{H^2}&\leq C\left(\varepsilon^{1/4}\|\beta(t,\cdot\,,\cdot\,)\|_{H_x^{2}(H_z^{0,0})}+\|\rho(t,\cdot\,,\cdot\,)\|_{H_x^{1}(H_z^{0,1})}\right)\label{eq:estizetaH2},\\
\label{est_tec_profile_ult}
\|\psi(t,\cdot,\cdot\,)\|_{H^1_x(H_z^{0,0})}&\leq C\|\rho(t,\cdot,\cdot\,)\|_{H^2_x(H_z^{0,2})}.
\end{align}

\subsubsection{Equation and estimates of the remainder}\label{sec_est_rem}

We will now analyze the remainder defined in \eqref{full_expan}, which is in fact a solution in the extended domain $\Oo$ to
\begin{equation}\label{remain_equa_friction}
	\left\{
    \begin{array}{lll}
    		\partial_t r^\varepsilon-\varepsilon\Delta r^\varepsilon+(u^\varepsilon\cdot\nabla)r^\varepsilon+\nabla \pi^\varepsilon = \{f^\varepsilon\}-\{A^\varepsilon r^\varepsilon\}+\varepsilon q^\varepsilon e_n+\varepsilon\te^1e_n&\hbox{in}& (0,+\infty)\times\Oo,\\
    	\noalign{\smallskip}\dis
    		\partial_t q^{\varepsilon}-\varepsilon\Delta q^\varepsilon+u^\varepsilon\cdot \nabla q^\varepsilon=\{h^\varepsilon\}- B^\varepsilon r^\varepsilon&\hbox{in}& (0,+\infty)\times\Oo,\\
    	\noalign{\smallskip}\dis
    		\nabla\cdot r^\varepsilon=0&\hbox{in}& (0,+\infty)\times\Oo,\\
    	\noalign{\smallskip}\dis
	    r^\varepsilon\cdot \nu=0, \quad    N(r^\varepsilon)=-N(g^\varepsilon)  &\hbox{on}& (0,+\infty)\times\partial\Oo,\\
	\noalign{\smallskip}\dis
	    R( q^\varepsilon)=-R( \te^1)&\hbox{on}& (0,+\infty)\times\partial\Oo,\\
	\noalign{\smallskip}\dis
	    r^\varepsilon(0,\cdot\,)=0, \quad q^\varepsilon(0,\cdot\,)=0&\hbox{in}& \Oo,
    \end{array}
    \right.
\end{equation}
where $g^\varepsilon:=u^1+\nabla\zeta^\varepsilon+\beta|_{z=0}$.
Let us introduce the amplification operators $A^\varepsilon$ and $B^\varepsilon$, given by
\begin{equation}\label{def_A_}
A^\varepsilon r^\varepsilon:=(r^\varepsilon\cdot\nabla)(u^0+\sqrt{\varepsilon}\rho+\varepsilon u^1+\varepsilon\nabla\zeta^\varepsilon+\varepsilon \beta)-(r^\varepsilon\cdot \nu)(\partial_z\rho+\sqrt{\varepsilon}\partial_z\beta)
\end{equation}
and
\begin{equation}
\label{eq:defB}
B^\varepsilon r^\varepsilon:=\varepsilon r^\varepsilon\cdot\nabla \theta^1
\end{equation}
and the forcing terms $f^\varepsilon$ and $h^\varepsilon$, with
\begin{equation}\label{def_f_}
\begin{array}{rl}
f^\varepsilon :=&\!\!\!(\Delta \varphi \partial_z \rho -2(\nu\cdot\nabla)\partial_z\rho+\partial_{zz} \beta)+\sqrt{\varepsilon}(\Delta\rho+\Delta\varphi\partial_z \beta-2(\nu\cdot\nabla)\partial_z \beta)\\
\noalign{\smallskip}\dis
&\!\!\!+\varepsilon(\Delta \beta+\Delta u^1 +\Delta\nabla\zeta^\varepsilon)-((\rho+\sqrt{\varepsilon}(\beta+u^1+\nabla\zeta^\varepsilon))\cdot\nabla)(\rho+\sqrt{\varepsilon}(\beta+u^1+\nabla\zeta^\varepsilon))\\
\noalign{\smallskip}\dis
&\!\!\!-(u^0\cdot\nabla)\beta-(\beta\cdot\nabla)u^0-u^0_\flat z\partial_z \beta+(\beta+u^1+\nabla\zeta^\varepsilon)\cdot \nu\partial_z(\rho+\sqrt{\varepsilon}\beta)\\
\noalign{\smallskip}\dis
&\!\!\!-\nabla \psi-\partial_t \beta
\end{array}
\end{equation}
and
\begin{equation}\label{def_h_}
h^\varepsilon:=\varepsilon\Delta\theta^1-(\sqrt{\varepsilon}\rho+\varepsilon(u^1+\nabla\zeta^\varepsilon+\beta))\cdot\nabla\te^1.
\end{equation}

We have to estimate the size of the remainder $(r^\varepsilon,q^\varepsilon)$ at final time and check that it is small. 
	We begin by establishing an energy estimate. 
	Thus, we multiply equation \eqref{remain_equa_friction}$_1$ by $r^\varepsilon$ and the equation \eqref{remain_equa_friction}$_2$ by $q^\varepsilon$ and integrate by parts. We proceed as before, term by term,  
%
the unique different being the terms coming from the boundary. 

We recall the following identity, see \cite[Lemma 2.2]{iftimie_Sueur} which will be used throughout the paper 
\begin{equation}
    \label{eq:identity_useful}
    \int_{\Oo} (-\Delta u) \cdot v = 2 \int_{\Oo} D(u) \cdot D (v) - 2 \int_{\partial\Oo} [D(u)\nu 
  ]_{tan} \cdot v \,  d\Gamma_{\mathcal{O}},
\end{equation}
where $u$ and $v$ are smooth vector fields such that $v$ is divergence free and tangent to the boundary. 

 Therefore, it follows that 
\begin{equation*}
-\varepsilon\int_\Oo \Delta r^\varepsilon\cdot r^\varepsilon=2\varepsilon\|D(r^\varepsilon)\|^2 +2\varepsilon  \int_{\partial\Oo} \left([M r^\varepsilon]_{tan} -N(g^\varepsilon)\right)\cdot r^\varepsilon \,  d\Gamma_{\mathcal{O}},
\end{equation*}
and we estimate the boundary term as follows 
\begin{equation}\label{est_r1}
\!\!\!\!\!\begin{array}{rl}
2\left|\displaystyle\int_{\partial\Oo}  \left([M r^\varepsilon]_{tan} -N(g^\varepsilon)\right) \cdot r^\varepsilon \, d\Gamma_{\mathcal{O}}\right|=&\!\!\!2\left|\displaystyle\int_{\partial \Oo}Mr^\varepsilon\cdot r^\varepsilon-N(g^\varepsilon)\cdot r^\varepsilon \,  d\Gamma_{\mathcal{O}}\right|\\
\noalign{\smallskip}\dis
\leq&\!\!\!\lambda\|\nabla r^\varepsilon\|^2+C_\lambda(\|r^\varepsilon\|^2+\|N(g^\varepsilon)\|^2_{L^2(\partial\Oo)})\\
\noalign{\smallskip}\dis
\leq&\!\!\!\lambda\|\nabla r^\varepsilon\|^2+C_\lambda(\|r^\varepsilon\|^2+\|g^\varepsilon\|^2_{L^2(\partial\Oo)}+\|D (g^\varepsilon)\|^2_{L^2(\partial\Oo)})\\
\noalign{\smallskip}\dis
\leq&\!\!\!\lambda\|\nabla r^\varepsilon\|^2+C_\lambda(\|r^\varepsilon\|^2+\|g^\varepsilon\|^2_{H^2}),
\end{array}
\end{equation}
for any $\lambda>0$, where $C_\lambda$ is a constant depending  on $\lambda$. Let us absorb the term $\|\nabla r^\varepsilon\|^2$ in \eqref{est_r1}. Thanks to the classical Korn's inequality (see Lemma \ref{lemma:korn_inequality}), since $\nabla\cdot r^\varepsilon=0$ in $\Oo$ and $r^\varepsilon\cdot \nu=0$ on $\partial \Oo$, we have 
\begin{equation*}\label{korn}
\|r^\varepsilon\|^2_{H^1}\leq C_K\|r^\varepsilon\|^2+C_K\|D (r^\varepsilon)\|^2.
\end{equation*}
for some $C_K>0$. Choosing $\lambda=1/(2C_K)$, we get:
\begin{align*}
\dfrac{d}{dt}\|r^\varepsilon\|^2+\varepsilon\|D( r^\varepsilon)\|^2&\leq \Bigl(\|\sigma^0\|_{\infty}+C\varepsilon+\|\{f^\varepsilon\}\|+2\|\{A^\varepsilon\}\|_\infty \Bigr)\|r^\varepsilon\|^2\\
&\qquad+\Bigl( C\varepsilon\|g^\varepsilon\|^2_{H^2}+\|\{f^\varepsilon\}\|+\varepsilon\|\te^1\|^2\Bigr)+\varepsilon\|q^\varepsilon\|^2\\
\end{align*}
and
\begin{align*}
\dfrac{d}{dt}\|q^\varepsilon\|^2+\varepsilon\|\nabla q^\varepsilon\|^2&\leq\Bigl(\|\sigma^0\|_\infty+\|\{h^\varepsilon\}\|+\|\{B^\varepsilon\}\|_\infty+C\varepsilon\Bigr)\|q^\varepsilon\|^2\\
&\qquad +\Bigl(\|\{h^\varepsilon\}\|+C\varepsilon\|\te^1\|_{H^2}^2\Bigr)+ \|\{B^\varepsilon\}\|_\infty\|r^\varepsilon\|^2.
\end{align*}     
Adding the two estimates above, we get
\begin{align*}
&\dfrac{d}{dt}(\|r^\varepsilon\|^2+\|q^\varepsilon\|^2)+\varepsilon(\|D( r^\varepsilon)\|^2+\!\|\nabla q^\varepsilon\|^2)\\
&\qquad\leq
\Bigl(\|\sigma^0\|_{\infty}+C\varepsilon+\|\{f^\varepsilon\}\|+2\|\{A^\varepsilon\}\|_\infty+\|\{h^\varepsilon\}\|+\|B^\varepsilon\|_\infty\Bigr)\\
&\qquad\times\Bigl(\|r^\varepsilon\|^2+\|q^\varepsilon\|^2\Bigr)+\Bigl(C\varepsilon\|g^\varepsilon\|^2_{H^2}+\|\{f^\varepsilon\}\|+\|\{h^\varepsilon\}\|+C\varepsilon\|\te^1\|_{H^2}^2\Bigr).       
\end{align*}              
Applying Gronwall's inequality in the interval $(0,T/\varepsilon)$, and using the fact that the initial datum is equal to $0$ and the estimates
\begin{eqnarray}
\|\{A^\varepsilon\}\|_{L^1(L^\infty)}
+\|B^\varepsilon\|_{L^1(L^\infty)}&=&O(1),\label{est_A_rem}\\
\noalign{\smallskip}\dis
\varepsilon\|\te^1\|^2_{L^2(H^2)}+\varepsilon\|g^\varepsilon\|^2_{L^2(H^2)}&=&O(\varepsilon^{\frac{1}{4}}),\label{est_g_rem}\\
\noalign{\smallskip}\dis
\|\{f^\varepsilon\}\|_{L^1(L^2)}+\|\{h^\varepsilon\}\|_{L^1(L^2)}&=&O(\varepsilon^{\frac{1}{4}}),\label{est_f_rem}
\end{eqnarray}
we obtain:
\begin{equation}\label{gron_re_qe_friction}
\|r^\varepsilon\|^2_{L^\infty(L^2)}+\|q^\varepsilon\|^2_{L^\infty(L^2)}+\varepsilon\|D( r^\varepsilon)\|^2_{L^2(L^2)}+\varepsilon\|\nabla q^\varepsilon\|^2_{L^2(L^2)}=O(\varepsilon^{\frac{1}{4}}).
\end{equation}

The estimates \eqref{est_A_rem}, \eqref{est_g_rem} and \eqref{est_f_rem} hold on the whole interval $[0,+\infty)$. The estimates for $\{A^\varepsilon\}$, $g^\varepsilon$ and $\{f^\varepsilon\}$ can be found in \cite[Section 4.4]{coron_marbach}.
Here, we give some details to obtain the estimates for  $B^\varepsilon$, $\theta^1$ and $\{h^\varepsilon\}$, which are new. \\
\indent First, the estimates of  $B^\varepsilon$ and $\theta^1$ are straightforward by using \eqref{eq:defB}, and Lemma \ref{exist_sol_u1}. Indeed, we easily get that $\|B^\varepsilon\|_{L^1(L^\infty)} = O(1)$~and that~$\varepsilon\|\te^1\|^2_{L^2(H^2)} =O(\varepsilon)$.\\
\indent Now, let us justify the estimate of $\{h^\varepsilon\}$. Note that the fast scaling variable enables us to win a factor $\varepsilon^{1/4}$, see \cite[Lemma 3]{iftimie_Sueur}. In what follows, we estimate each one of the terms in \eqref{def_h_}. The first term is $O(\varepsilon)$, thanks to the regularity of $\theta^1$. The second one can be treated as follows
\begin{align*}
   \|\sqrt{\varepsilon}\{\rho(t,\cdot\,)\} \cdot \nabla \theta^1(t,\cdot\,)\|& \leq C \sqrt{\varepsilon} \|\{\rho(t,\cdot\,)\}\|_{H^1}  \|\nabla \theta^1(t,\cdot\,)\|_{H^1}\\
   & \leq C \left(\sqrt{\varepsilon} \|\rho(t,\cdot\,,\cdot\,)\|_{H_x^1 (H_{z}^{0,0})} + \|\{\partial_z \rho(t,\cdot)\}\| \right) \|\nabla \theta^1(t,\cdot\,)\|_{H^1}\\
   & \leq C \left(\sqrt{\varepsilon} \|\rho(t,\cdot\,,\cdot\,)\|_{H_x^1( H_{z}^{0,0})}  + \varepsilon^{1/4} \|\rho(t,\cdot\,,\cdot\,)\|_{H_x^1( H_{z}^{1,0})}\right) \|\nabla \theta^1(t,\cdot\,)\|_{H^1}\\
   & \leq C \varepsilon^{1/4} \|\rho(t,\cdot\,,\cdot\,)\|_{H_x^1 (H_z^{1,0})}\|\nabla \theta^1(t,\cdot\,)\|_{H^1}.
\end{align*}
Then, integrating by parts  this last inequality with respect to time over $(0,T/\varepsilon)$, using Lemma \ref{lemma_limita_boundary_layer} for $k=4$ 
 and also the fact that $\theta^1$ is bounded in $L^{\infty}(0,T;H^3(\Oo))$, we get: 
\begin{equation*}
    \|\sqrt{\varepsilon}\{\rho(\cdot\,,\cdot\,)\} \cdot \nabla \theta^1\|_{L^1 (L^2)} = O(\varepsilon^{1/4}).
\end{equation*}
 Now, for the third term, by  using \eqref{est_tec_profile_1} and \eqref{eq:estizetaH2} we have the following:
\begin{align*}
   \|\varepsilon\nabla_x \zeta^{\varepsilon}(t,\cdot\,) \cdot \nabla \theta^1(t,\cdot\,)\|& \leq C \varepsilon \|\nabla_x \zeta^{\varepsilon}(t,\cdot\,)\|_{H^1}  \|\nabla \theta^1(t,\cdot\,)\|_{H^1}\\
   & \leq C \varepsilon \|\zeta^{\varepsilon}(t,\cdot\,)\|_{H^2}\|\nabla \theta^1(t,\cdot\,)\|_{H^1}\\
   & \leq C \varepsilon\left(\varepsilon^{1/4}\|\beta(t,\cdot\,,\cdot\,)\|_{H_x^{2}(H_z^{0,0})}+\|\rho(t,\cdot\,,\cdot\,)\|_{H_x^{1}(H_z^{0,1})}\right)\|\nabla \theta^1(t,\cdot\,)\|_{H^1}\\
   & \leq C \varepsilon \|\rho(t,\cdot\,,\cdot\,)\|_{H_x^3 (H_z^{1,2})}\|\nabla \theta^1(t,\cdot\,)\|_{H^1}.
\end{align*}
Integrating  by parts this last inequality, with respect to time, and using again Lemma \ref{lemma_limita_boundary_layer} for $k=3$ and the fact that $\theta^1$ is bounded in $L^{\infty}(0,T;H^3(\Oo))$, we find that 
\begin{equation*}
    \|\varepsilon\nabla_x \zeta^{\varepsilon}(\cdot\,,\cdot\,) \cdot \nabla \theta^1(\cdot,\cdot\,)\|_{L^1 (L^2)} = O(\varepsilon^{1/4}).
\end{equation*}
The last term can be estimated in a similar way, using \eqref{est_tec_profile_1}.

\subsection{Towards the trajectory}\label{sec_appro_traj}
	
	In this section, we deduce a small-time global approximate controllability result to smooth trajectories. For that, we will use Lemma \ref{lemma_limita_boundary_layer} and the estimates on the remainder term \eqref{gron_re_qe_friction}. 

	Let $(u^\varepsilon,p^\varepsilon, \te^\varepsilon)$ be the solution to the equation \eqref{equa_u_layer_} on the time interval $[0,T/\varepsilon]$.
	 First, during the interval $[0,T]$, we use the expansions
\begin{equation}\label{expan_1}
	\begin{array}{ccl}
	u^\varepsilon&=&u^0+\sqrt{\varepsilon}\{\rho\}+\varepsilon u^{1,\varepsilon}+\varepsilon\nabla \zeta^\varepsilon+\varepsilon\{\beta\}+\varepsilon r^\varepsilon,\\
	\noalign{\smallskip}\dis
	\te^\varepsilon&=&\te^0+\varepsilon^2 \te^{1,\varepsilon}+\varepsilon^2 q^\varepsilon,
	\end{array}
\end{equation}
	where $u^{1,\varepsilon}(0,\cdot\,)=u_*$, $\te^{1,\varepsilon}(0,\cdot\,)=\te_*$ and $u^{1,\varepsilon}(T,\cdot\,)=\overline{u}(\varepsilon T,\cdot\,)$, $\te^{1,\varepsilon}(T,\,\cdot\,)=\overline{\te}(\varepsilon T,\,\cdot\,)$. The couple $(u^{1,\varepsilon},\te^{1,\varepsilon})$ solves, together with some $p^{1,\varepsilon}$, the usual first-order system \eqref{u1_equa} and the profiles $u^{1,\varepsilon}$ and $\te^{1,\varepsilon}$ depend on $\varepsilon$. However, since the reference trajectory belongs to $C^\infty$, all the required estimates can be made independent of $\varepsilon$.  In a second step, for large times $t\geq T$, we change our expansions and set:
\begin{equation}\label{expan_2}
	\begin{array}{ccl}
	u^\varepsilon&=&\sqrt{\varepsilon}\{\rho\}+\varepsilon \overline{u}(\varepsilon t,\,\cdot\,)+\varepsilon\nabla \zeta^\varepsilon+\varepsilon\{\beta\}+\varepsilon r^\varepsilon,\\
	\noalign{\smallskip}\dis
	p^\varepsilon&=&\varepsilon^2 \overline{p}(\varepsilon t,\,\cdot\,)+\varepsilon \mu^\varepsilon+\varepsilon\pi^\varepsilon,\\
	\noalign{\smallskip}\dis
	v^\varepsilon&=&\sqrt{\varepsilon}v^\rho+\varepsilon^2\overline{v},\\
	\noalign{\smallskip}\dis
	\te^\varepsilon&=&\varepsilon^2 \overline{\te}(\varepsilon t,\,\cdot\,)+\varepsilon^2  q^\varepsilon,\\
    	\noalign{\smallskip}\dis
    	w^{\varepsilon}&=&\varepsilon^3 \overline{w}.
\end{array}
\end{equation}
Note that, for $t\geq T$, we have $u^0=0$ and the profile $(u^1,\te^1)$ is the main trajectory and changing \eqref{expan_1} by \eqref{expan_2} allow us to get rid of some 
	terms in the equation satisfied by the remainder. Indeed, terms such as $\varepsilon \Delta u^1$, $\varepsilon(u^1\cdot \nabla)u^1$, $\varepsilon u^1\cdot \nabla \te^1$ and $\varepsilon\Delta \te^1$ will not appear any more in \eqref{def_f_} and \eqref{def_h_} because they are already taken into account by $\left(\overline{u},\overline{\te}\right)$. Actually, despite the presence of the profile $(u^1,\te^1)$  in both steps, the estimates obtained for the remainder profile are as in Section \ref{sec_est_rem}.
	
	 Let us introduce
\begin{equation*}
	u^{(\varepsilon)}(t,x):=\frac{1}{\varepsilon}u^\varepsilon\left(\frac{t}{\varepsilon},x\right) 
	\ \ \ \ \text{and} \ \ \ \ 
	\te^{(\varepsilon)}(t,x):=\frac{1}{\varepsilon^2}\te^\varepsilon\left(\frac{t}{\varepsilon},x\right).
\end{equation*}
	Then, 
	thanks to \eqref{est_tec_profile_1}, \eqref{eq:estizetaH2} and \eqref{gron_re_qe_friction}, we see that
\begin{align*}
  \left\|u^{(\varepsilon)}(T,\cdot\,)-\overline{u}(T,\cdot\,)\right\| &= \left\|\varepsilon^{-1/2}       \left\{\rho\left(T/\varepsilon,\cdot\,\right)\right\}+\nabla \zeta^\varepsilon(T/\varepsilon,\cdot\,)+
		 \left\{\beta\left(T/\varepsilon,\cdot\,\right)\right\} + r^\varepsilon\left(T/\varepsilon,\cdot\,\right)\right\| \\
   & \leq \varepsilon^{-1/2}   \left\|     \left\{\rho\left(T/\varepsilon,\cdot\,\right)\right\}\right\|+
   \varepsilon^{1/4}\|\beta(T/\varepsilon,\cdot\,,\cdot\,)\|_{H_x^{2}(H_z^{0,0})}\\
   &\,\,\,\,\,\,\,\,\,+\|\rho(T/\varepsilon,\cdot\,,\cdot\,)\|_{H_x^{1}(H_z^{0,1})}+\left\|\left\{\beta\left(T/\varepsilon,\cdot\,\right)\right\} \right\|+
   \left\|r^\varepsilon\left(T/\varepsilon,\cdot\,\right) \right\|\\
   &\leq \varepsilon^{-1/2}   \left\|     \left\{\rho\left(T/\varepsilon,\cdot\,\right)\right\}\right\|+
   \varepsilon^{1/4}\|\rho(T/\varepsilon,\cdot\,,\cdot\,)\|_{H_x^{3}(H_z^{1,2})}\\
   &\,\,\,\,\,\,\,\,\,+\|\rho(T/\varepsilon,\cdot\,,\cdot\,)\|_{H_x^{1}(H_z^{0,1})}+\left\|\rho\left(T/\varepsilon,\cdot\,,\cdot\,\right) \right\|_{H_x^{1}(H_z^{1,2})}+O(\varepsilon^{\frac{1}{8}}).
\end{align*}
	 We can use \eqref{est_lemma_rho} to estimate the terms containing $\rho$ in the estimates above. First, note that
	 \[
	 \lim_{s\to+\infty}\dfrac{\log s}{s^{1/2}}=0
	 \]
	 and, consequently, there exists a positive constant $C>0$ such that 
	 \[
	 \dfrac{\log s}{s}\leq Cs^{-1/2} \quad \forall s \geq 1.
	 \]
	 Then, by taking $\varepsilon$ sufficiently small, the following is found for $k \geq 2$:
	 \[
	 \begin{array}{l}
	 \varepsilon^{-\frac{1}{2}}\left\|     \left\{\rho\left(T/\varepsilon,\cdot\,\right)\right\}\right\|
	 =\varepsilon^{-\frac{1}{2}}\|\rho(T/\varepsilon,\cdot\,,\cdot\,)\|_{H^0_x(H^{0,0}_z)}
	 \leq C \varepsilon^{-\frac{1}{2}}\left| \dfrac{\log(2+T/\varepsilon)}{2+T/\varepsilon}\right|^{\frac{1}{4}+\frac{k}{2}}\leq C \varepsilon^{-\frac{1}{2}+ \frac{1}{8}+\frac{k}{4}},\\
	 	\noalign{\smallskip}\dis
	 \varepsilon^{\frac{1}{4}}\|\rho(T/\varepsilon,\cdot\,,\cdot\,)\|_{H_x^{3}(H_z^{1,2})}
	 \leq C \varepsilon^{\frac{1}{4}}\left| \dfrac{\log(2+T/\varepsilon)}{2+T/\varepsilon}\right|^{\frac{1}{4}+\frac{k}{2}-1}
	 \leq C \varepsilon^{\frac{1}{4}+\frac{1}{8}+\frac{k}{4}-\frac{1}{2}},\\
	 	\noalign{\smallskip}\dis
	 \|\rho(T/\varepsilon,\cdot\,,\cdot\,)\|_{H_x^{1}(H_z^{0,1})}
	 \leq C \left| \dfrac{\log(2+T/\varepsilon)}{2+T/\varepsilon}\right|^{\frac{1}{4}+\frac{k}{2}-\frac{1}{2}}
	 \leq C \varepsilon^{\frac{1}{8}+\frac{k}{4}-\frac{1}{4}},\\
	 	\noalign{\smallskip}\dis
	 |\rho(T/\varepsilon,\cdot\,,\cdot\,)|_{H_x^{1}(H_z^{1,2})}
	 \leq C \left| \dfrac{\log(2+T/\varepsilon)}{2+T/\varepsilon}\right|^{\frac{1}{4}+\frac{k}{2}-1}
	 \leq C \varepsilon^{\frac{1}{8}+\frac{k}{4}-\frac{1}{2}}.
	\end{array}
	 \]
Finally, we choosing $k$ large enough, we conclude that
\begin{equation*}
	\left\|u^{(\varepsilon)}(T,\cdot\,)-\overline{u}(T,\cdot\,)\right\|=O(\varepsilon^{\frac{1}{8}})
\end{equation*}
	and, from \eqref{gron_re_qe_friction}, we have
		\begin{equation*}
		\left\|\te^{(\varepsilon)}(T,\cdot\,)-\overline{\te}(T,\cdot\,)\right\|=\left\|q^\varepsilon\left(T/\varepsilon,\cdot\,\right)\right\|= O(\varepsilon^{\frac{1}{8}}) .
	\end{equation*}
This concludes the proof of Proposition \ref{lemma_trajec_aprox} holds.


\section[Local control
 of the Boussinesq system]{Local controllability
 of the Boussinesq system with nonlinear boundary conditions}
%
\label{sec_local}


	
	Let  $\omega_c$ and $\omega$ be two non-empty open subsets such that $\omega_c \subset \subset \omega \subset \subset \Oo \setminus \overline{\Omega}$ and let $\chi_{\omega}$ be a cut-off function such that $\chi_{\omega}=0$ outside $\omega$ and $\chi_{\omega} = 1$ in $\omega_c$.

	\indent The goal of this section is to prove the local exact controllability to trajectories for the Boussinesq system with distributed controls:
\begin{equation}\label{equa_local}
	\left\{
		\begin{array}{lll}
    			\partial_t u-\Delta u+(u\cdot\nabla)u+\nabla p = \theta e_n + v \chi_{\omega} &\hbox{in} & \mathcal{O}_T,\\
    			\noalign{\smallskip}\dis
    			\partial_t \theta-\D \te+u\cdot \nabla \te=w \chi_{\omega} &\hbox{in} & \mathcal{O}_T,\\
    			\noalign{\smallskip}\dis
    			\nabla\cdot u=0 &\hbox{in} &\mathcal{O}_T,\\
    			\noalign{\smallskip}\dis
    			u\cdot \nu=0, \quad N(u)+ [f(u)]_{tan}=0
    			&\hbox{on} & \varLambda_T,\\
    			\noalign{\smallskip}\dis
    			R (\te)+g(\te)=0
    			&\hbox{on} & \varLambda_T,\\
    			\noalign{\smallskip}\dis
    			u(0,\cdot\,)=u_*, \quad \te(0,\cdot\,)=\te_* &\hbox{in} & \mathcal{O},
		\end{array}
	\right.
\end{equation}
where $f\in C^3(\R^n;\R^n)$ with a symmetric Jacobian matrix (or equivalently, $f$ is an irrotational field) and $ g\in C^3(\R)$. Note that, to prove Theorem \ref{theo_main}, we only need to prove a local-controllability result for \eqref{def_solu}, with linear Navier boundary conditions $N(u)$ as in \eqref{def_N} and linear Robin boundary conditions $R(\te)$ as \eqref{def_R}. However, for sake of completeness, we establish a local controllability result for the Boussinesq system with nonlinear Navier boundary conditions on the velocity field and nonlinear Fourier boundary conditions on the temperate. 

Since \eqref{equa_local} is nonlinear, we first begin by proving a (global) null-controllability result for the following system
\begin{equation}\label{equa_local_linear}
	\left\{
		\begin{array}{lll}
    			\partial_t z-\Delta z+((a+b)\cdot\nabla)z+(z\cdot\nabla)b+\nabla q = h e_n + v \chi_{\omega} &\hbox{in} & \mathcal{O}_T,\\
    			\noalign{\smallskip}\dis
    			\partial_t h-\D h+(a+b)\cdot \nabla h+z\cdot \nabla c=w \chi_{\omega} &\hbox{in} & \mathcal{O}_T,\\
    			\noalign{\smallskip}\dis
    			\nabla\cdot  z=0 &\hbox{in} &\mathcal{O}_T,\\
    			\noalign{\smallskip}\dis
    			z\cdot \nu=0, \quad [D(z)\nu+Az]_{tan}=0&\hbox{on} & \varLambda_T,\\
    			\noalign{\smallskip}\dis
    			{\partial h\over \partial \nu} + B h=0&\hbox{on} & \varLambda_T,\\
    			\noalign{\smallskip}\dis
    			z(0,\cdot\,)=z_*, \quad h(0,\cdot\,)=h_* &\hbox{in} & \mathcal{O},
		\end{array}
	\right.
\end{equation}
where the vector fields $a$, $b$, the scalar function $c$, the symmetric matrix $A$ and the scalar function $B$  satisfy the following assumptions:
\begin{align}
    &a, b \in L^{\infty}(0,T;L_{div}^2(\Oo)^n\cap L^{\infty}(\Oo)^n),\ a_t, b_t\in L^2(0,T;L^r(\mathcal{O})^n), \label{hipo_d}\\
    &  c \in L^{\infty}(0,T; L^{\infty}(\Oo)), \quad c_t\in L^2(0,T;L^r(\mathcal{O})),\label{hypod1}\\
    \label{hipo_A_3}
	&A\in P:=H^{1-\ell}(0,T;W^{\vartheta_1,\vartheta_1+1} (\partial \mathcal{O})^{n\times n}) \cap H^{(3-\ell)/2}(0,T;H^{\vartheta_2} (\partial \mathcal{O})^{n\times n}),\\
	    \label{hipo_B_3}
	& B\in Q:= H^{1-\ell}(0,T;W^{\vartheta_1,\vartheta_1+1} (\partial \mathcal{O})) \cap H^{(3-\ell)/2}(0,T;H^{\vartheta_2} (\partial \mathcal{O})),
\end{align}
 with $0 < \ell < 1/2$ arbitrarily close to $1/2$,
$r=2n$, $\vartheta_2 = (1/2)(3-n)+(1-\ell)(n-2)$  and $\vartheta_1> 1$ (arbitrarily small) if $n=3$ and  $\vartheta_1 = 1$ if $n =2$. By Sobolev embeddings, we readily have $P\hookrightarrow L^\infty((0,T)\times \partial \mathcal{O})^{n\times n}$ and $Q\hookrightarrow L^\infty((0,T)\times \partial \mathcal{O})$. \\
\indent It is well-known, that the null-controllability of system \eqref{equa_local_linear} is equivalent to prove an observability estimate for the adjoint system:
	
\begin{equation}\label{equa_local_adjunta}
	\left\{
		\begin{array}{lll}
    			-\partial_t \varphi-\Delta \varphi-(a\cdot\nabla)\varphi-D(\varphi) b+\nabla \pi = c\nabla \psi &\hbox{in} & \mathcal{O}_T,\\
    			\noalign{\smallskip}\dis
    			-\partial_t \psi -\D \psi-(a+b)\cdot \nabla \psi=\varphi \cdot e_n &\hbox{in} & \mathcal{O}_T,\\
    			\noalign{\smallskip}\dis
    			\nabla\cdot \varphi=0 &\hbox{in} &\mathcal{O}_T,\\
    			\noalign{\smallskip}\dis
    			\varphi\cdot \nu=0, \quad [D(\varphi)\nu+A\varphi]_{tan}=0&\hbox{on} & \varLambda_T,\\
    			\noalign{\smallskip}\dis
{\partial \psi\over \partial \nu} + B \psi=0&\hbox{on} & \varLambda_T,\\
    			\noalign{\smallskip}\dis
    			\varphi(T,\cdot\,)=\varphi_*, \quad \psi(T,\cdot\,)=\psi_* &\hbox{in} & \mathcal{O}.
		\end{array}
	\right.
\end{equation}
The desired observability inequality will be a consequence of a global Carleman inequality for  \eqref{equa_local_adjunta}, see Proposition \ref{Carlemantheo} below.



\subsection{Carleman estimates}
	
	Before stating the required Carleman inequality, let us introduce several classical weights in the study of Carleman inequalities for parabolic equations, see \cite{Fursikov_Imanuvilov}. The basic weight will be a function $\eta^0 \in C^2(\overline{\Oo})$ verifying
\begin{equation}\label{def_eta_0}
	\begin{array}{ccc}
    		\eta^0>0\quad \hbox{in} \quad\Oo, & \eta^0\equiv 0 \quad\hbox{on}\quad \partial\Oo, & |\nabla\eta^0|>0 \quad\hbox{in} \quad\overline{\Oo\setminus\omega^{'}},    		
	\end{array}
\end{equation}
where $\omega^{'}\subset\subset \omega_c$ is a non-empty open set.\\
\indent Thus, for any $\lambda>0$ we set:
\begin{equation*}
	\begin{array}{cc}
    		\alpha(x,t)=\dfrac{e^{2\lambda\|\eta^0\|_\infty}-e^{\lambda\eta^0(x)}}{t^4(T-t)^4}, & \xi(x,t)=\dfrac{e^{\lambda\eta^0(x)}}{t^4(T-t)^4},\\
		\noalign{\smallskip}\dis
    		\alpha^*(t)=\max_{x\in \overline{\mathcal{O}}}\alpha(x,t), & \dis \widehat{\alpha}(t)= \min_{x\in \overline{\mathcal{O}}}\alpha(x,t),\\
    		\noalign{\smallskip}\dis
    		\xi^*(t)=\min_{x\in \overline{\mathcal{O}}}\xi(x,t), & \dis \widehat{\xi}(t)= \max_{x\in \overline{\mathcal{O}}}\xi(x,t).
	\end{array}
\end{equation*}
We also introduce the following notation:
\begin{equation}\label{def_I}
	\begin{array}{ccc}
    		\dis I(s,\lambda; \varphi)&=&s^3\lambda^4\displaystyle\iint_{\mathcal{O}_T} e^{-2s\alpha}\xi^3|\varphi|^2\,+s\lambda^2\iint_{\mathcal{O}_T} e^{-2s\alpha}\xi|\nabla\varphi|^2\,\\
    		\noalign{\smallskip}\dis
    		&&+s^{-1}\displaystyle\iint_{\mathcal{O}_T} e^{-2s\alpha}\xi^{-1}(|\varphi_t|^2+|\Delta\varphi|^2),
	\end{array}
\end{equation}
where $s$ and $\lambda$ are positive real numbers and $\varphi= \varphi(t,x)$.


We have the following Carleman inequality for \eqref{equa_local_adjunta}:

\begin{prop}\label{Carlemantheo}
 Assume that the assumptions \eqref{hipo_d}, \eqref{hypod1}, \eqref{hipo_A_3}, \eqref{hipo_B_3} are fulfilled. There exist positive constants $\widetilde{\lambda}$, $\widetilde{s}$ and $C=C(\Oo,\omega_c)$ such that, for any $(\varphi_*,\psi_*) \in L_{div}^{2}(\Oo) \times  L^2(\Oo)$, the corresponding
solution to \eqref{equa_local_adjunta} verifies:
\begin{equation}\label{carleman_local}
\dis I(s,\lambda; \varphi)+\dis I(s,\lambda; \psi)\leq C(1+T^2)s^{15/2}\lambda^8\displaystyle\iint_{(0,T)\times\omega_c} e^{-4s\hat{\alpha}+2s\alpha^*}\hat{\xi}^{15/2}(|\varphi|^2+|\psi|^2),
\end{equation}
for all
$\lambda \geq \widetilde{\lambda}$ and $s\geq\widetilde{s}$. Furthermore, $\widetilde{\lambda}$ and $\widetilde{s}$ have the form  $\widetilde{\lambda}=\widetilde{\lambda}_0e^{\widetilde{\lambda}_1T}$ and $\widetilde{s}=\widetilde{s}_0e^{\lambda \widetilde{s}_1}(T^4+T^8)$, where  $\widetilde{\lambda}_0$, $\widetilde{\lambda}_1$ and $\widetilde{s}_0$ only depend on $\|a\|_\infty$, $\|b\|_\infty$, $\|c\|_\infty$,  $\|a_t\|_{L^2(L^r)}$, $\|b_t\|_{L^2(L^r)}$, $\|c_t\|_{L^2(L^r)}$,  $\|A\|_P$ and { $\|B\|_Q$}, and $\widetilde{s}_1$ only depend on $\Oo$ and
$\omega_c$.
\end{prop}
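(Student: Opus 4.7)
The plan is to establish the global Carleman estimate \eqref{carleman_local} by deriving separately a Carleman inequality for the Stokes-type system satisfied by $\varphi$ with Navier slip-with-friction boundary condition and a Carleman inequality for the backward heat equation satisfied by $\psi$ with Robin boundary condition, then to combine them and absorb the coupling terms by choosing $s$ and $\lambda$ large enough. A final bootstrap argument will replace the local terms involving derivatives by purely $L^2$-local integrals of $\varphi$ and $\psi$, producing the powers $s^{15/2}\lambda^8$ and the weight $e^{-4s\widehat{\alpha}+2s\alpha^*}\widehat{\xi}^{15/2}$ appearing on the right-hand side of \eqref{carleman_local}.

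Concretely, the first step is to apply a Carleman inequality for the Stokes system with Navier-slip boundary conditions (following the strategy of Guerrero~\cite{guerrero_CEL_NS} and Coron–Marbach–Sueur~\cite{coron_marbach}), treating the right-hand side of the $\varphi$-equation as $c\nabla\psi+(a\cdot\nabla)\varphi+D(\varphi)b$ and rewriting the boundary condition as $[D(\varphi)\nu]_{tan}=-[A\varphi]_{tan}$. The boundary contributions generated by $[A\varphi]_{tan}$ are bounded through a trace inequality combined with the embedding $P\hookrightarrow L^\infty((0,T)\times\partial\Oo)^{n\times n}$, which is precisely the reason for the choice of space $P$ in \eqref{hipo_A_3}. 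This yields an estimate of the shape
\begin{equation*}
I(s,\lambda;\varphi)\leq C\Bigl(s^3\lambda^4\iint_{(0,T)\times\omega'}e^{-2s\alpha}\xi^3|\varphi|^2+\iint_{\mathcal{O}_T}e^{-2s\alpha}\bigl(|c\nabla\psi|^2+|\varphi|^2+|\nabla\varphi|^2\bigr)\Bigr).
\end{equation*}
The second step is the analogous Carleman estimate for the backward heat equation satisfied by $\psi$, where the Robin term $B\psi$ is handled exactly as the friction term was for $\varphi$ thanks to $Q\hookrightarrow L^\infty((0,T)\times\partial\Oo)$:
\begin{equation*}
I(s,\lambda;\psi)\leq C\Bigl(s^3\lambda^4\iint_{(0,T)\times\omega'}e^{-2s\alpha}\xi^3|\psi|^2+\iint_{\mathcal{O}_T}e^{-2s\alpha}\bigl(|\varphi|^2+|\psi|^2+|\nabla\psi|^2\bigr)\Bigr).
\end{equation*}

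Adding the two inequalities, the lower-order terms of the form $\iint e^{-2s\alpha}(|\varphi|^2+|\nabla\varphi|^2+|\nabla\psi|^2)$ on the right-hand side are absorbed by the dominant parts $s^3\lambda^4 e^{-2s\alpha}\xi^3|\varphi|^2$, $s\lambda^2 e^{-2s\alpha}\xi|\nabla\varphi|^2$ and $s\lambda^2 e^{-2s\alpha}\xi|\nabla\psi|^2$ of $I(s,\lambda;\varphi)+I(s,\lambda;\psi)$ once $\lambda\geq\widetilde{\lambda}$ and $s\geq\widetilde{s}$ are chosen large enough in terms of the norms $\|a\|_\infty,\|b\|_\infty,\|c\|_\infty,\|A\|_P,\|B\|_Q$ (the dependence on the time derivatives $\|a_t\|_{L^2(L^r)}$ etc. enters whenever integrations by parts in time are needed to eliminate time derivatives of the coefficients arising in the intermediate computations). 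At this stage one has an intermediate estimate with a local right-hand side of the form $\iint_{(0,T)\times\omega'}e^{-2s\alpha}\xi^3(|\varphi|^2+|\psi|^2)$ possibly contaminated by local norms of $\nabla\varphi,\nabla\psi,\Delta\varphi,\Delta\psi$.

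The final and most delicate step is to replace those local gradient and Laplacian terms by purely $L^2$-local terms of $\varphi$ and $\psi$. This is done through the standard bootstrap used in the Navier–Stokes/Boussinesq Carleman literature: one picks a cut-off $\chi\in C_c^\infty(\omega_c)$ with $\chi\equiv 1$ on $\omega'$, multiplies each equation by $\chi\, e^{-2s\widehat{\alpha}}\widehat{\xi}^{p}\,(\varphi,\psi)$ for suitable $p$, and integrates by parts in space (using $\nabla\cdot\varphi=0$ to eliminate the pressure $\pi$) and in time (using the terminal data at $t=T$ and the decay at $t=0$ built into the weights). Each space derivative absorbed at this stage costs a power of $s\widehat{\xi}$, and each time derivative of the weight costs a factor $\widehat{\xi}^{5/4}$; carefully tracking the exponents leads to the final weights and to the powers $s^{15/2}\lambda^8$ displayed in \eqref{carleman_local}. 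I expect the main obstacle to be step three: tracking simultaneously the dependence on $A$ and $B$ in the boundary contributions generated by every integration by parts, and handling the coupling term $c\nabla\psi$ in the $\varphi$-equation, which must be absorbed by the gradient part of $I(s,\lambda;\psi)$ using that $s\lambda^2 e^{-2s\alpha}\xi$ dominates $e^{-2s\alpha}\|c\|_\infty^2$ uniformly on $\mathcal{O}_T$ once $s$ and $\lambda$ are sufficiently large, a threshold that produces the explicit form $\widetilde{s}=\widetilde{s}_0 e^{\lambda\widetilde{s}_1}(T^4+T^8)$ and $\widetilde{\lambda}=\widetilde{\lambda}_0 e^{\widetilde{\lambda}_1 T}$ claimed in the statement.
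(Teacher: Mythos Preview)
Your outline captures the overall architecture---heat-type Carleman inequalities for $\varphi$ and $\psi$, absorption of lower-order coupling terms, and a final elimination of local derivative terms---but it contains a genuine gap: you have essentially suppressed the pressure $\pi$. In your first displayed estimate for $I(s,\lambda;\varphi)$ there is no $\pi$-term, and later you write that $\nabla\cdot\varphi=0$ ``eliminates the pressure $\pi$'' during the bootstrap. That is not how the argument works. When one applies a parabolic Carleman estimate to the $\varphi$-equation (this is what Guerrero~\cite{guerrero_CEL_NS} does, and what the paper does in its Step~1), the term $\nabla\pi$ sits as a genuine source and produces a global integral $\lambda\iint e^{-2s\alpha}|\nabla\pi|^2$ on the right-hand side that cannot be absorbed by any piece of $I(s,\lambda;\varphi)$. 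The divergence-free condition on $\varphi$ only kills $\pi$ when paired against a divergence-free test function; it does nothing for $|\nabla\pi|^2$.

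The paper devotes three full steps (its Steps~2--4) to this issue: one takes the divergence of the $\varphi$-equation to obtain $\Delta\pi=\nabla\cdot((a\cdot\nabla)\varphi+D\varphi\,b+c\nabla\psi)$, applies an \emph{elliptic} Carleman estimate (Imanuvilov--Puel~\cite{Imanuvilov_Puel}) to $\pi$ at each fixed time, controls the boundary trace $\|\pi\|_{H^{1/2}(\partial\Oo)}$ via an auxiliary weighted Stokes energy estimate, and finally replaces the local $|\pi|^2$, $|\nabla\pi|^2$ integrals by local integrals of $\varphi_t$, $\Delta\varphi$, $\nabla\varphi$, $\nabla\psi$ using the equation itself and Poincar\'e--Wirtinger. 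Only after this does one face the local $|\Delta\varphi|^2$ and $|\varphi_t|^2$ terms. The local $\Delta\varphi$ is handled by a fundamental-solution argument for the heat operator (the paper's Step~5), and the local $\varphi_t$ is handled by integrating by parts in time to produce $\varphi_{tt}$, then estimating $\eta^*\varphi_{tt}$ via strong-solution regularity for the \emph{time-differentiated} adjoint system (the paper's Step~6). It is precisely in this last step that the hypotheses on $a_t,b_t,c_t\in L^2(L^r)$ and the higher time regularity of $A,B$ encoded in $P,Q$ are actually used, and it is this step that generates the exponent $15/2$ and the weight $e^{-4s\widehat{\alpha}+2s\alpha^*}\widehat{\xi}^{15/2}$; your attribution of the time-derivative hypotheses to ``integrations by parts in time to eliminate time derivatives of the coefficients'' and your generic bookkeeping for the exponent are both off the mark.
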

The proof of Proposition \ref{Carlemantheo} follows the arguments of \cite{guerrero_CEL_NS}. For completeness and because of the presence of the equation satisfied by $\psi$, we provide its details in Appendix \ref{sec:proofCarleman}.


\subsection{Null controllability of the linearized system}

	In this section we will deduce the null-controllability of the linear system \eqref{equa_local_linear} as a consequence of the inequality \eqref{carleman_local}. We introduce the following notation for denoting the space where the control is found:
	\begin{equation}
	    \mathcal{H} := H^1(0,T;L^2(\Oo)^n)\cap C^0([0,T];H^1(\Oo)^n)\times  H^1(0,T;L^2(\Oo))\cap C^0([0,T];H^1(\Oo)).
	\end{equation}
	

\begin{prop}\label{result_null_linear}
	Let $(z_*,h_*)\in L^2_{\text{div}}(\mathcal{O})^n\times  L^2(\Oo)$ and let us suppose that \eqref{hipo_d}, \eqref{hypod1}, \eqref{hipo_A_3} and  \eqref{hipo_B_3} holds. Then, there exist controls $(v,w) \in \mathcal{H}$ 
	 such that the corresponding solution to \eqref{equa_local_linear} satisfies
	\begin{equation}\label{cond}
		z(T,\cdot\,)=0\quad\hbox{and}\quad h(T,\cdot\,)=0.
	\end{equation}
	Moreover, the following estimate holds 
	\begin{equation*}
		\|\kappa^{1/2}v\chi_\omega\|+\|\kappa^{1/2}w\chi_\omega\|+\|v\|_{H^1(L^2)}+\|v\|_{L^\infty(H^1)}+\|w\|_{ H^1(L^2)}+\|w\|_{ L^\infty(H^1)}\leq C(\|z_*\|+\|h_*\|),
	\end{equation*}
	where the positive constant C, depending only on $\Oo$, $\omega$, $T$, $\|a\|_\infty$, $\|b\|_\infty$, $\|c\|_\infty$, $\|a_t\|_{L^2(L^r)}$, $\|b_t\|_{L^2(L^r)}$, 
	$\|c_t\|_{L^2(L^r)}$, $\|A\|_P$ and  $\|B\|_Q$, and $\kappa(t)=e^{4s\hat{\alpha}-2s\alpha^*}\hat{\xi}^{-15/2}$ and $s$, $\lambda$, $\hat{\alpha}$, 
$\alpha^*$ and $\hat{\xi}$  are defined at the beginning of the previous section.
\end{prop}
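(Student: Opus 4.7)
The proof follows the classical Fursikov--Imanuvilov duality approach adapted to the coupled Boussinesq system, and proceeds in three stages: derive an observability inequality from the Carleman estimate \eqref{carleman_local}, construct weighted $L^2$ controls by minimizing a quadratic functional, and finally upgrade the regularity of the controls to the space $\mathcal{H}$.

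First, from Proposition \ref{Carlemantheo}, keeping only the lowest-order left-hand side term (namely $s^3\lambda^4\iint e^{-2s\alpha}\xi^3|\varphi|^2$ and the analogue for $\psi$) and combining it with a standard energy estimate for \eqref{equa_local_adjunta} on the interval $(0,T/2)$, I would obtain the observability inequality
\begin{equation*}
\|\varphi(0,\cdot)\|^2 + \|\psi(0,\cdot)\|^2 \leq C\iint_{(0,T)\times\omega_c}\kappa^{-1}\bigl(|\varphi|^2+|\psi|^2\bigr),
\end{equation*}
valid for every finite-energy solution to \eqref{equa_local_adjunta}. Then, for each $\varepsilon>0$, I would minimize the strictly convex, coercive functional
\begin{equation*}
J_\varepsilon(v,w) = \tfrac{1}{2}\iint_{(0,T)\times\Oo}\kappa\,\chi_\omega\bigl(|v|^2+|w|^2\bigr) + \tfrac{1}{2\varepsilon}\bigl(\|z_\varepsilon(T)\|^2+\|h_\varepsilon(T)\|^2\bigr),
\end{equation*}
where $(z_\varepsilon,h_\varepsilon)$ solves \eqref{equa_local_linear}. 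Writing the Euler--Lagrange equation, identifying the optimal $(v_\varepsilon,w_\varepsilon)$ in terms of an adjoint state, and invoking the observability inequality yields uniform-in-$\varepsilon$ bounds on both the weighted $L^2$-norm of the control and $\varepsilon^{-1/2}\|(z_\varepsilon,h_\varepsilon)(T)\|$. Passing to the limit $\varepsilon\to 0^+$ produces controls $(v,w)$ satisfying \eqref{cond} together with $\|\kappa^{1/2}v\chi_\omega\|+\|\kappa^{1/2}w\chi_\omega\|\leq C(\|z_*\|+\|h_*\|)$.

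To promote these weighted $L^2$-controls to $H^1(0,T;L^2)\cap C^0([0,T];H^1)$, I would exploit that $\kappa^{-1/2}(t)\sim \exp(-c/(T-t)^4)$ vanishes, together with all its time derivatives, at $t=T$. A splitting of the time interval allows the bootstrap: on $[0,T/2]$, where the weight $\kappa$ is bounded, classical parabolic regularity theory for the linear state with $L^2$ data and weighted $L^2$ forcing yields $L^2(0,T/2;H^2)\cap H^1(0,T/2;L^2)$ estimates, hence a control in the desired class; on $[T/2,T]$, I would invoke maximal-$L^2$-regularity for the Stokes system with Navier slip-with-friction boundary conditions and for the heat equation with Robin condition, with time-dependent boundary operators in the anisotropic spaces $P$ and $Q$. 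The exponents $\vartheta_1,\vartheta_2,\ell$ in \eqref{hipo_A_3}--\eqref{hipo_B_3} are tuned precisely so this maximal regularity holds in the framework of \cite{shibata1,shibata2,shimada,guerrero_CEL_NS}, allowing the controls, modified by multiplication with a smooth cut-off in the Carleman weight, to inherit the required regularity. A Sobolev embedding of the form $H^1(0,T;L^2)\cap L^\infty(0,T;H^1)\hookrightarrow C^0([0,T];H^1)$ then delivers the final norm.

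The main obstacle is the regularity bookkeeping at $t=0$: the duality argument naturally produces controls that vanish (and are smooth) at $t=T$ thanks to the weight decay, but extracting $C^0([0,T];H^1)$ requires continuity of $(v,w)$ also at $t=0$, hence compatibility with the boundary operators $[D(z)\nu+Az]_{tan}$ and $\partial_\nu h + Bh$ for the (possibly nonzero) initial datum. A standard remedy is to let the system evolve freely on a short initial subinterval, apply the controlled scheme to the regularized state from $t=\eta$ onwards, and patch the two pieces; the weighted bound is preserved by choosing $\eta$ small. The remaining ingredients - energy estimates, Korn's inequality to absorb the friction contribution of $A$, and trace estimates on $\partial\Oo$ - parallel closely those of \cite{Cara_NS_BOU,guerrero_CEL_NS}.
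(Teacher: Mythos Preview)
Your first two stages---deriving observability from the Carleman estimate and running the penalized HUM scheme---coincide with the paper's approach. The divergence, and the genuine gap, is in the regularity upgrade.

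The paper does not split the time interval or patch in a free-evolution phase. Instead, it uses the explicit structure of the optimal control: the Euler--Lagrange equation for $J_\varepsilon$ gives exactly $v^\varepsilon=\kappa^{-1}\varphi^\varepsilon$ and $w^\varepsilon=\kappa^{-1}\psi^\varepsilon$, where $(\varphi^\varepsilon,\psi^\varepsilon)$ solves the adjoint system with final data $-\varepsilon^{-1}(z^\varepsilon,h^\varepsilon)(T)$. Hence $(\widetilde\varphi^\varepsilon,\widetilde\psi^\varepsilon):=\kappa^{-1}(\varphi^\varepsilon,\psi^\varepsilon)=(v^\varepsilon,w^\varepsilon)$ itself solves the adjoint system with zero final data and extra source $-(\kappa^{-1})_t(\varphi^\varepsilon,\psi^\varepsilon)$. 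One then applies weak energy estimates, followed by strong (maximal-regularity) estimates for Stokes with Navier slip and heat with Robin conditions, directly to this system; the source term is controlled by the Carleman inequality since $|(\kappa^{-1})_t|\lesssim\kappa^{-1/2}$. This yields the $H^1(L^2)\cap C^0(H^1)$ bound on the controls in one stroke.

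Your plan misses this identification and instead tries to get control regularity from state regularity, which does not follow. It also contains a factual error: the weight $\kappa(t)=e^{4s\hat\alpha-2s\alpha^*}\hat\xi^{-15/2}$ blows up at \emph{both} $t=0$ and $t=T$ (because $4\hat\alpha-2\alpha^*=2(e^{\lambda\|\eta^0\|_\infty}-1)^2/(t^4(T-t)^4)>0$), so $\kappa^{-1}$ and hence the controls vanish at both endpoints. Your worry about compatibility at $t=0$ and the proposed patching on $[0,\eta]$ are therefore unnecessary---the Carleman weight already enforces the vanishing, and the adjoint system for $(\widetilde\varphi^\varepsilon,\widetilde\psi^\varepsilon)$ has homogeneous data at $t=T$ and a globally controlled source.
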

\begin{proof}
	It follows the ideas of \cite{guerrero_CEL_NS}. It is based on a penalized Hilbert Uniqueness Method. Thus, let $(z_*,h_*)\in L^2_{\text{div}}(\mathcal{O})^n\times  L^2(\Oo)$, and  for each $\varepsilon > 0$, let consider the extremal problem
\begin{equation}\label{inf}
\left\{
\begin{array}{llr}
	\dis \hbox{Minimize }   \ \dfrac{1}{2}\iint_{\Oo_T}\kappa(t)\left(|v|^2+|w|^2\right)\chi_\omega\,
	+\dfrac{1}{2\varepsilon}\left(\|z(T, \,\cdot\,)\|^2_{L^2}+\|h(T,\,\cdot\,)\|^2_{{ L^2}}\right) 														\\ 
	\noalign{\smallskip}
	\dis \hbox{Subject to }~v\in L^2(\Oo_T),
	~w\in L^2(\Oo_T)
	~~\text{and}~~(z,h,v,w)~~\text{solves}~~
	\eqref{equa_local_linear}. \\ 
	\noalign{\smallskip}
\end{array}
\right.
\end{equation}

	There exists a (unique) solution to \eqref{inf}, denoted by $(z^\varepsilon, h^\varepsilon, v^\varepsilon, w^\varepsilon)$, with $\kappa(t)^{1/2}(v^\varepsilon,w^\varepsilon)\in L^2(\Oo_T)^n\times L^2(\Oo_T)$, since the functional in \eqref{inf} is coercive, strictly convex and $C^1$ in the Hilbert space $L^2(\Oo_T)^{n+1}$. The associated  Euler-Lagrange equation yields
\begin{equation}\label{I50}
	\iint_{\Oo_T}\kappa(t) \left( v^\varepsilon\cdot v+w^\varepsilon w\right)\chi_\omega\,\,
	+\dfrac{1}{\varepsilon}\int_\Oo \left[ z^\varepsilon(T,\cdot\,)\cdot Z(T,\cdot\,)+h^\varepsilon(T,\cdot\,)H(T,\cdot\,)\right]=0
\end{equation}
 for all $(v,w)\in L^2(\Oo_T)^n\times L^2(\Oo_T)$, where $(Z,H)$ is, together with some $\Pi$, the solution to the  system
\begin{equation}\label{eq_97}
	\left\{
		\begin{array}{lll}
    			\partial_t Z-\Delta Z+((a+b)\cdot\nabla)Z+(Z\cdot\nabla)b+\nabla \Pi = H e_n + v \chi_\omega &\hbox{in} & \mathcal{O}_T,\\
    			\noalign{\smallskip}\dis
    			\partial_t H-\D H+(a+b)\cdot \nabla H+Z\cdot \nabla c=w\chi_\omega &\hbox{in} & \mathcal{O}_T,\\
    			\noalign{\smallskip}\dis
    			\nabla\cdot Z=0 &\hbox{in} &\mathcal{O}_T,\\
    			\noalign{\smallskip}\dis
    			Z\cdot \nu=0, \quad \left[D(Z)\nu+AZ\right]_{tan}=0&\hbox{on} & \varLambda_T,\\
    			\noalign{\smallskip}\dis
    			{\partial H\over \partial \nu}+BH=0&\hbox{on} & \varLambda_T,\\
    			\noalign{\smallskip}\dis
    			Z(0,\cdot\,)=0, \quad H(0,\cdot\,)=0 &\hbox{in} & \mathcal{O}.
		\end{array}
	\right.
\end{equation}

    Let us now introduce the solution $(\varphi^\varepsilon, \pi^\varepsilon, \psi^\varepsilon)$ to the following homogeneous adjoint system:
\begin{equation*}
	\left\{
		\begin{array}{lll}
    			-\partial_t \varphi^\varepsilon-\Delta \varphi^\varepsilon-(a\cdot\nabla)\varphi^\varepsilon-D\varphi^\varepsilon b+\nabla \pi^\varepsilon = c\nabla \psi^\varepsilon 
			&\hbox{in} & \mathcal{O}_T,\\
    			\noalign{\smallskip}\dis
    			-\partial_t \psi^\varepsilon-\D \psi^\varepsilon-(a+b)\cdot \nabla \psi^\varepsilon=\varphi^\varepsilon \cdot e_n &\hbox{in} & \mathcal{O}_T,\\
    			\noalign{\smallskip}\dis
    			\nabla\cdot \varphi^\varepsilon=0 &\hbox{in} &\mathcal{O}_T,\\
    			\noalign{\smallskip}\dis
    			\varphi^\varepsilon\cdot \nu=0, \quad [D(\varphi^\varepsilon)\nu+A\varphi^\varepsilon]_{tan}=0&\hbox{on} & \varLambda_T,\\
    			\noalign{\smallskip}\dis
    			{\partial \psi^\varepsilon\over \partial\nu} +B \psi^\varepsilon =0&\hbox{on} & \varLambda_T,\\
    			\noalign{\smallskip}\dis
    			\varphi^\varepsilon(T,\cdot\,)=-\frac{1}{\varepsilon}z^\varepsilon(T,\cdot\,), \quad \psi^\varepsilon(T,\cdot\,)=-\frac{1}{\varepsilon}h^\varepsilon(T,\cdot\,) 
			&\hbox{in} & \mathcal{O}.
		\end{array}
	\right.
\end{equation*}
The duality between $(\varphi^\varepsilon,\psi^\varepsilon)$ and $(Z,H)$ give
\begin{equation*}
	-\dfrac{1}{\varepsilon}\int_\Oo \left[z^\varepsilon(T,\cdot\,)\cdot Z(T,\cdot\,)+ h^\varepsilon(T,\cdot\,)H(T,\cdot\,)\right]\,=\iint_{\Oo_T} (v\cdot\varphi^\varepsilon +w\,\psi^\varepsilon)\chi_\omega \,
\end{equation*}
which, combined with \eqref{I50}, yields
\begin{equation*}
	\iint_{\Oo_T}(v\,\cdot\, \varphi^\varepsilon +w\,\psi^\varepsilon)\chi_\omega \,
	=\iint_{\Oo_T}\kappa(t) \left( v^\varepsilon \cdot  v+ w^\varepsilon w\right)\chi_\omega\,
\end{equation*}
for all $(v,w)\in L^2(\Oo_T)^n\times L^2(\Oo_T)$. Consequently, we have the following identify
\begin{equation*}
	v^\varepsilon=\kappa^{-1}(t)\varphi^\varepsilon\quad\hbox{and}\quad w^\varepsilon=\kappa^{-1}(t)\psi^\varepsilon\big.
\end{equation*}
The duality between the systems fulfilled by $(z^\varepsilon,h^\varepsilon)$ and $(\varphi^\varepsilon,\psi^\varepsilon)$, gives
\begin{equation*}
	-\dfrac{1}{\varepsilon}\left(\|z^\varepsilon(T,\cdot\,)\|^2+\!\| w^\varepsilon(T,\cdot\,)\|^2\right)\!=\!
	\int_{\Oo}\!\!z_*\cdot \,\varphi^\varepsilon(0,\cdot\,)+h_*\psi^\varepsilon(0,\cdot\,)\,+\iint_{\Oo_T}\kappa^{-1}(t)\left(|\varphi^\varepsilon|^2+ |\psi^\varepsilon|^2\right)\chi_\omega.
\end{equation*}
Moreover, the Carleman inequality \eqref{carleman_local} applied to $(\varphi^\varepsilon,\psi^\varepsilon)$ gives
\begin{equation*}
	\|\varphi^\varepsilon(0,\cdot\,)\|^2+\| \psi^\varepsilon(0,\cdot\,)\|^2\leq 
	C(\Oo,\omega,T,a,b,c,A,B)\iint_{\Oo_T}\kappa^{-1}(t)(|\varphi^\varepsilon|^2+|\psi^\varepsilon|^2 )\chi_\omega.
\end{equation*}
Hence, we conclude that
\begin{equation}\label{eq:vw_epsilon}
\begin{alignedat}{2}
	\frac{1}{\varepsilon}\|z^\varepsilon(T,\cdot\,)\|^2+\frac{1}{\varepsilon}\|w^\varepsilon(T,\cdot\,)\|^2+
	\|\kappa^{1/2}v^\varepsilon\chi_\omega\|^2+\|\kappa^{1/2}w^\varepsilon\chi_\omega\|^2\leq 
	C\left(\|z_*\|^2+\|h_*\|^2\right)\quad \forall \varepsilon>0.
	\end{alignedat}
\end{equation}

	Let us now estimate the norms of $(v^\varepsilon,w^\varepsilon)$ in $\mathcal{H}$. To this purpose, let us introduce the functions $(\widetilde{\varphi}^\varepsilon, \widetilde{\pi}^\varepsilon, \widetilde{\psi}^\varepsilon):=\kappa(t)^{-1}(\varphi^\varepsilon,\pi^\varepsilon,\psi^\varepsilon)$, which satisfy
\begin{equation*}
	\left\{
		\begin{array}{lll}
    			-\partial_t \widetilde{\varphi}^\varepsilon-\Delta \widetilde{\varphi}^\varepsilon-(a\cdot\nabla)\widetilde{\varphi}^\varepsilon-D(\widetilde{\varphi}^\varepsilon) b
			+\nabla \widetilde{\pi}^\varepsilon = c\nabla \widetilde{\psi}^\varepsilon -\partial_t[\kappa(t)^{-1}]\varphi^\varepsilon    &\hbox{in} & \mathcal{O}_T,\\
    			\noalign{\smallskip}\dis
    			-\partial_t \widetilde{\psi}^\varepsilon-\D \widetilde{\psi}^\varepsilon-(a+b)\cdot \nabla \widetilde{\psi}^\varepsilon=\widetilde{\varphi}^\varepsilon \cdot e_n 
			-\partial_t[\kappa(t)^{-1}] \psi^\varepsilon        &\hbox{in} & \mathcal{O}_T,\\
    			\noalign{\smallskip}\dis
    			\nabla\cdot \widetilde{\varphi}^\varepsilon=0 &\hbox{in} &\mathcal{O}_T,\\
    			\noalign{\smallskip}\dis
    			\widetilde{\varphi}^\varepsilon\cdot \nu=0, \quad [D(\widetilde{\varphi}^\varepsilon)\nu+A\widetilde{\varphi}^\varepsilon]_{tan}=0&\hbox{on} & 
			\varLambda_T,\\
    			\noalign{\smallskip}\dis
    			 {\partial\widetilde{\psi}^\varepsilon\over \partial \nu }+ B \widetilde{\psi}^\varepsilon=0&\hbox{on} & \varLambda_T,\\
    			\noalign{\smallskip}\dis
    			\widetilde{\varphi}^\varepsilon(T,\cdot\,)=0, \quad \widetilde{\psi}^\varepsilon(T,\cdot\,)=0 &\hbox{in} & \mathcal{O}.
		\end{array}
	\right.
\end{equation*}
    Then, it is not difficult to deduce
    the following energy estimate
\begin{equation*}
	\|\widetilde{\varphi}^\varepsilon\|_{L^2(H^1)}+\|\widetilde{\psi}^\varepsilon\|_{L^2(H^1)}
	\leq Ce^{CT(\|a\|^2_\infty+\|b\|^2_\infty+\|c\|^2_\infty+\|A\|^2_\infty+\|B\|^2_\infty)}
	\left(\|(\kappa^{-1})_t\varphi^\varepsilon\|+\|(\kappa^{-1})_t\psi^\varepsilon\|\right).
\end{equation*}
    Now, applying strong energy estimates for the Stokes equations with Navier slip boundary conditions, see \cite[Proposition $1.1$]{guerrero_CEL_NS}, and for the heat equation with Robin boundary conditions, see \cite[Proposition $2$]{C-G-B-P-2}, we deduce that
\begin{equation*}
	\|\widetilde{\varphi}^\varepsilon_t\|+\|\widetilde{\psi}^\varepsilon_t\|
	+\|\widetilde{\varphi}^\varepsilon\|_{L^2(H^2)}+\|\widetilde{\psi}^\varepsilon\|_{L^2(H^2)}+\|\widetilde{\varphi}^\varepsilon\|_{C^0(H^1)}+\|\widetilde{\psi}^\varepsilon\|_{C^0(H^1)}
	\leq C \left(\|(\kappa^{-1})_t\varphi^\varepsilon\|+\|(\kappa^{-1})_t\psi^\varepsilon\|\right),
\end{equation*}
where $C=C(\Oo,\omega,T,a,b,c,A,B)$. Combining the previous estimate and the Carleman estimate \eqref{carleman_local}, we get 
\begin{equation*}
	\|v^\varepsilon_t\|+\|w^\varepsilon_t\|+\|v^\varepsilon\|_{L^2(H^2)}+\|w^\varepsilon\|_{L^2(H^2)}
	+\|v^\varepsilon\|_{C^0(H^1)}+\|w^\varepsilon\|_{C^0(H^1)}
	\leq C \left(\|\kappa^{1/2}v^\varepsilon\chi_\omega\|+\|\kappa^{1/2}w^\varepsilon\chi_\omega\|\right).
\end{equation*}

Finally, thanks to \eqref{eq:vw_epsilon}, there exists a control pair
    $(v,w)$ such that 
    $$(v,w)\in H^1(0,T;L^2(\Oo)^{n+1})\cap L^2(0,T;H^2(\Oo)^{n+1})\cap C^0(0,T;H^1(\Oo)^{n+1}),$$
\begin{equation*}
	\|v\|_{H^1(L^2)}+\|w\|_{H^1(L^2)}+\|v\|_{L^2(H^2)}+\|w\|_{L^2(H^2)}+\|v\|_{C^0(H^1)}+\|w\|_{C^0(H^1)}
	\leq C\left(\|z_*\|^2_H+\|h_*\|^2\right).
\end{equation*}
	and the associated solution to \eqref{equa_local_linear}, denoted by $(z,q,h)$, satisfies \eqref{cond} and, moreover,
\begin{equation*}
	\|\kappa^{1/2}v\chi_\omega\|^2+\|\kappa^{1/2}w\chi_\omega\|^2\leq C(\|z_*\|^2+\|h_*\|^2).
\end{equation*}

	This concludes the proof.
\end{proof}
%


\subsection{Local exact controllability to the trajectories of the Boussinesq system}

	This section is devoted to prove the local exact controllability to the trajectories of \eqref{equa_local}.
	Let $(\overline{u},\overline{p},\overline{\te})$ an uncontrolled solution of \eqref{equa_local}, that is, a solution of 
	\begin{equation*}\label{equa_trajecto}
	\left\{
		\begin{array}{lll}
    			\partial_t \overline{u}-\Delta \overline{u}+(\overline{u}\cdot\nabla)\overline{u}+\nabla \overline{p} = \overline{\te} e_n  &\hbox{in} & \mathcal{O}_T,\\
    			\noalign{\smallskip}\dis
    			\partial_t \overline{\te}-\D \overline{\te}+\overline{u}\cdot \nabla \overline{\te}=0 &\hbox{in} & \mathcal{O}_T,\\
    			\noalign{\smallskip}\dis
    			\nabla\cdot \overline{u}=0 &\hbox{in} &\mathcal{O}_T,\\
    			\noalign{\smallskip}\dis
    			\overline{u}\cdot \nu=0, \quad N(\overline{u})+[f(\overline{u})]_{tan}=0&\hbox{on} & \varLambda_T,\\
    			\noalign{\smallskip}\dis
    			R (\overline{\te})+g(\overline{\te})=0&\hbox{on} & \varLambda_T,\\
    			\noalign{\smallskip}\dis
    			\overline{u}(0,\cdot\,)=\overline{u}_*, \quad \overline{\te}(0,\cdot\,)=\overline{\te}_* &\hbox{in} & \mathcal{O}.
		\end{array}
	\right.
\end{equation*}

	We will assume the following regularity for the trajectories:
	\begin{equation}\label{cond_te_H3}
	\begin{array}{llll}
		&\overline{u}\in X:=H^{(3-\ell)/2}(0,T; H^{\vartheta_2+1/2}(\Oo)^n\cap L^2_{div}(\Oo)^n)\cap H^{1-\ell}(0,T;W^{\vartheta_1+1/2,\vartheta_1+1}(\Oo)^n),\\
			 \noalign{\smallskip}\dis
		& \overline{u}_*\in H^3(\Oo)^n\cap L^2_{div}(\Oo)^n,\quad N(\overline{u}_*)+\left[f(\overline{u}_*)\right]_{tan}=0\quad  \text{on} \quad \partial\Oo,\\
		 \noalign{\smallskip}\dis
		 &\overline{\theta}\in Y:=H^{(3-\ell)/2}(0,T; H^{\vartheta_2+1/2}(\Oo))\cap H^{1-\ell}(0,T;W^{\vartheta_1+1/2,\vartheta_1+1}(\Oo)),\\
		 \noalign{\smallskip}\dis
		 & \overline{\theta}_*\in H^3(\Oo), \quad R( \overline{\te}_*)+ g(\overline{\te}_*)=0\quad \text{on}\quad \partial \Oo.
	\end{array}
	\end{equation}

We have the following result:
\begin{prop}\label{local_non_lin}
	Let $f\in C^3(\R^n;\R^n)$, $g\in C^3(\R)$ , $T>0$, $(\overline{u}_*, \overline{\theta}_*)$ satisfying \eqref{cond_te_H3}. Then, there exists  $\delta>0$ such that, for every $(u_*, \theta_*) \in [H^3(\Oo)^n\cap L^2_{div}(\Oo)^n] \times H^3(\Oo)$ satisfying the compatibility condition
	\begin{equation}\label{compati_bound}
	 N({u}_*)+\left[f({u}_*)\right]_{tan}=0,\qquad R({\te}_*)+g({\te}_*)=0\quad \text{on} \quad \partial\Oo
	\end{equation}
	and such that $\|u_* -\overline{u}_*\|_{H^3}\leq \delta$ and $\|\theta_*-\overline{\theta}_*\|_{H^3}\leq \delta$, 
	 there exist controls $(v,w)\in \mathcal{H}$ and associated solutions $(u,p,\theta)$ to \eqref{equa_local} satisfying
	\begin{equation}
 		u(T,\cdot\,)=\overline{u}(T,\cdot\,)\quad\hbox{and}\quad \theta(T,\cdot\,)=\overline{\theta}(T,\cdot\,)\quad\hbox{in} \ \Oo.
	\end{equation}
\end{prop}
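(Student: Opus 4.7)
The plan is to reduce Proposition \ref{local_non_lin} to a local null-controllability statement for the shifted system and to conclude via an inverse mapping theorem whose linear ingredient is Proposition \ref{result_null_linear}. Setting $(y,\pi,\vartheta) := (u-\overline{u},p-\overline{p},\theta-\overline{\theta})$ and $(z_*,h_*) := (u_* - \overline{u}_*, \theta_* - \overline{\theta}_*)$, the exact controllability to $(\overline{u},\overline{\theta})$ becomes null-controllability of a nonlinear system for $(y,\vartheta)$ from $H^3$ data of size $\leq\delta$. First I would linearize the nonlinear boundary operators via the Taylor expansions $f(u)=f(\overline{u})+f'(\overline{u})y+F(y;\overline{u})$ with $F(y;\overline{u}) = O(|y|^2)$ smooth, and $g(\theta)=g(\overline{\theta})+g'(\overline{\theta})\vartheta+G(\vartheta;\overline{\theta})$ analogously. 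Since $f$ has a symmetric Jacobian, $A := M + f'(\overline{u})$ is a symmetric matrix, so together with $B := m + g'(\overline{\theta})$ the principal linear part of the shifted system is exactly of the form \eqref{equa_local_linear} with $a=0$, $b=\overline{u}$, $c=\overline{\theta}$, and with boundary inhomogeneities $-[F(y;\overline{u})]_{tan}$, $-G(\vartheta;\overline{\theta})$ that are quadratic in the unknowns.

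Next I would verify that the coefficients meet \eqref{hipo_d}--\eqref{hipo_B_3}: the interior bounds on $a,b,c$ and their time derivatives follow from \eqref{cond_te_H3} and Sobolev embeddings, whereas $A\in P$ and $B\in Q$ follow from \eqref{cond_te_H3} and the $C^3$-regularity of $f,g$ via a Moser/composition estimate in the anisotropic spaces defining $P$ and $Q$. Proposition \ref{result_null_linear} then provides a bounded linear solver for the associated linearized control problem, with the crucial weighted bound on $\|\kappa^{1/2}(v,w)\chi_\omega\|_{L^2}$ forcing the controlled linear state to vanish at $T$.

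To pass from the linear to the nonlinear statement I would invoke Liusternik's inverse mapping theorem, in the spirit of \cite{Fursikov_Imanuvilov,guerrero_CEL_NS}. Concretely, I would introduce a Banach space $E$ of tuples $(y,\vartheta,v,w)$ carrying (i) the weighted control norm that forces the final state to vanish, (ii) enough maximal regularity so that $(y\cdot\nabla)y$ and $y\cdot\nabla\vartheta$ sit in $L^2(\mathcal{O}_T)$, and (iii) trace regularity on $\partial\mathcal{O}$ compatible with the spaces $P,Q$; the target space $F$ would consist of initial data satisfying \eqref{compati_bound} together with the residuals of all equations and boundary conditions. The map
\begin{equation*}
\mathcal{N}(y,\vartheta,v,w) := \bigl(\,\text{residuals of the shifted nonlinear system},\ y(0)-z_*,\ \vartheta(0)-h_*\,\bigr)
\end{equation*}
satisfies $\mathcal{N}(0)=0$, is of class $C^1$ near the origin, and its differential $\mathcal{N}'(0)$ coincides with the linear control operator, hence is surjective by Proposition \ref{result_null_linear}. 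Liusternik's theorem then yields a neighborhood of $0\in F$ in the image of $\mathcal{N}$, so that for $\delta$ sufficiently small the data $(z_*,h_*)$ lie in this neighborhood and produce $(y,\vartheta,v,w)\in E$ solving the nonlinear problem with $(y,\vartheta)(T,\cdot\,)=0$, as required.

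The hard part will be item (iii) of the functional setting: controlling the quadratic boundary corrections $F(y;\overline{u})$ and $G(\vartheta;\overline{\theta})$ in the fractional Sobolev spaces $P$ and $Q$ that enter the hypotheses of Proposition \ref{result_null_linear}. These spaces are tuned to the maximal regularity theory for Stokes with Navier slip and for the heat equation with Robin boundary (cf.\ \cite{guerrero_CEL_NS,C-G-B-P-2}), and estimating compositions with $C^3$ nonlinearities in them requires Banach-algebra/Moser-type estimates in anisotropic Sobolev spaces, using crucially the extra regularity of $(\overline{u},\overline{\theta})$ furnished by \eqref{cond_te_H3} so that the quadratic remainders are indeed a higher-order perturbation in the $E$-norm. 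A secondary, essentially routine, point is the preservation of the compatibility conditions \eqref{compati_bound} at $t=0$: this is imposed both on $(\overline{u}_*,\overline{\theta}_*)$ and on $(u_*,\theta_*)$ and makes $\mathcal{N}$ well-defined as a $C^1$ map between $E$ and $F$.
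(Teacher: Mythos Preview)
Your approach is correct in spirit but differs from the paper's in two significant structural choices.

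First, the paper does \emph{not} use Liusternik's inverse mapping theorem; it uses Kakutani's set-valued fixed-point theorem. The key difference is how the nonlinearities are absorbed. Instead of your Taylor expansion $f(u)=f(\overline u)+f'(\overline u)y+\text{(quadratic)}$, the paper writes $f(z+\overline u)-f(\overline u)=F(\overline u,z)z$ with $F_i(\overline u,z)=\int_0^1\nabla f_i(\overline u+sz)\,ds$ (and similarly $G(\overline\theta,h)$ for $g$), so that the \emph{entire} boundary nonlinearity is a matrix coefficient times $z$. Then, for each frozen $(\Phi,\Psi)$ in an affine subspace $X_0\times Y_0$ (carrying the initial data), one solves the linear problem \eqref{equa_local_linear} with $a=\Phi$, $b=\overline u$, $c=\overline\theta$, $A=M+F(\overline u,\Phi)$, $B=m+G(\overline\theta,\Psi)$, invoking Proposition~\ref{result_null_linear} directly. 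The multi-valued map $(\Phi,\Psi)\mapsto\{(z_{(\Phi,\Psi)},h_{(\Phi,\Psi)})\}$ is shown to satisfy Kakutani's hypotheses via the compact embedding of the maximal-regularity space $\widetilde X\times\widetilde Y:=[H^2_t L^2\cap H^1_t H^2]^2$ into $X\times Y$, and the smallness of $\delta$ is used only to force the map into a ball of fixed radius.

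What each route buys: the paper's trick of freezing the full nonlinearity in the coefficients $(a,A,B)$ makes every linear subproblem have \emph{homogeneous} boundary conditions of exactly the form treated in Proposition~\ref{result_null_linear}, so no lifting of inhomogeneous Navier/Robin data is needed and no exponentially weighted solution space has to be constructed. Your Liusternik route is the classical Fursikov--Imanuvilov scheme and would also work, but it requires (a) extending the linear result to right-hand sides and boundary data in suitable weighted spaces (so that $\mathcal N'(0)$ is onto the full target $F$), and (b) the Moser-type composition estimates in $P,Q$ that you correctly flag as the hard part. In the paper these anisotropic estimates still appear, but only to check $F(\overline u,\Phi)\in P$ and $G(\overline\theta,\Psi)\in Q$ for frozen $(\Phi,\Psi)$, which is slightly lighter than controlling a quadratic remainder in a weighted trace norm.
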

\begin{proof}
	Let us denote $z=u-\overline{u}$ and $h=\te-\overline{\te}$. Making the difference between the system fulfilled by  $\overline{u}$ and system \eqref{equa_local}, we have:
	\begin{equation*}
	\left\{
	\begin{array}{lll}
    		\partial_t z-\Delta z+(z\cdot\nabla)z+(z\cdot\nabla)\overline{u}+(\overline{u}\cdot\nabla)z+\nabla q = h e_n + v \chi_{\omega} &\hbox{in} & \mathcal{O}_T,\\
    		\noalign{\smallskip}\dis
    		\partial_t h-\D h+z\cdot \nabla h+\overline{u}\cdot\nabla h+z\cdot \nabla \overline{\te}=w \chi_{\omega} &\hbox{in} & \mathcal{O}_T,\\
    		\noalign{\smallskip}\dis
    		\nabla\cdot  z=0 &\hbox{in} &\mathcal{O}_T,\\
    		\noalign{\smallskip}\dis
    		z\cdot \nu=0, \quad N(z)+[F(\overline{u},z)z]_{tan}=0&\hbox{on} & \varLambda_T,\\
    		\noalign{\smallskip}\dis
    		R( h)+G(\overline{\theta},h)h=0&\hbox{on} & \varLambda_T,\\
    		\noalign{\smallskip}\dis
    		z(0,\cdot\,)=u_*-\overline{u}_*=z_*, \quad h(0,\cdot\,)=\te_*-\overline{\te}_*=h_* &\hbox{in} & \mathcal{O},
  	\end{array}
    	\right.
	\end{equation*}
	where 
	\begin{equation*}
		F_i(\overline{u},z)=\int_0^1\nabla f_i(\overline{u}+s z)\,ds,~~ F(\overline{u},z)=(F_1(\overline{u},z),\ldots,F_n(\overline{u},z))~~\text{and}~~ G(\overline{\theta},h)=\int_0^1 g^\prime(\overline{\theta}+s h)\,ds.
	\end{equation*}
	\indent Now, our goal is to find controls $(v,w)$ such that $z(T,\cdot\,)\equiv 0$ and $h(T,\cdot\,)\equiv 0$. To this purpose, we will use Proposition \ref{result_null_linear}  and  a fixed-point argument.
	
	First, we introduce the following closed linear manifolds
	 \begin{equation*}
	 	X_0=\{\Phi\in X:\,\, \Phi(0,\cdot\, )=z_*\,\text{ in } \,\Oo\}\quad\hbox{and}\quad Y_0=\{\Psi\in Y:\,\, \Psi(0,\cdot\, )=h_*\,\text{ in } \,\Oo\}.
	\end{equation*}
	
	Then, for each $(\Phi,\Psi)\in X_0\times Y_0$, we can apply Proposition \ref{result_null_linear} to guarantee the existence of controls 
	$(v_{(\Phi,\Psi)},w_{(\Phi,\Psi)}) \in \mathcal{H}$ such that the associated solution to 
	\begin{equation}\label{eq:Phi}
	\left\{
	\begin{array}{lll}
    		\partial_t z-\Delta z+(\Phi\cdot\nabla)z+(z\cdot\nabla)\overline{u}+(\overline{u}\cdot\nabla)z+\nabla q = h e_n +  v\chi_\omega &\hbox{in} & \mathcal{O}_T,\\
    		\noalign{\smallskip}\dis
    		\partial_t h-\D h+\Phi\cdot \nabla h+\overline{u}\cdot\nabla h+z\cdot \nabla \overline{\te}= w\chi_\omega &\hbox{in} & \mathcal{O}_T,\\
    		\noalign{\smallskip}\dis
    		\nabla\cdot  z=0 &\hbox{in} &\mathcal{O}_T,\\
    		\noalign{\smallskip}\dis
    		z\cdot \nu=0, \quad N(z)+[F(\overline{u},\Phi)z]_{tan}=0&\hbox{on} & \varLambda_T,\\
    		\noalign{\smallskip}\dis
    		R( h)+G(\overline{\theta}, \Psi)h=0&\hbox{on} & \varLambda_T,\\
    		\noalign{\smallskip}\dis
    		z(0,\cdot\,)=u_*-\overline{u}_*=z_*, \quad h(0,\cdot\,)=\te_*-\overline{\te}_*=h_* &\hbox{in} & \mathcal{O},
  	\end{array}
    	\right.
	\end{equation}
	verifies $z_{(\Phi,\Psi)}(T,\cdot\,)=0$, $h_{(\Phi,\Psi)}(T,\cdot\,)=0$.  Since $F\in C^2(\R^n\times \R^n;\R^{n\times n})$ and
	$G\in C^2(\R^2;\R)$ and $\overline{u}$ and $\overline{\theta}$ verify \eqref{cond_te_H3},  
	we have $F_i(\overline{u}, \Phi)\in X$ for all $\Phi \in X_0$, $i=1,\ldots,n$, and $G(\overline{\theta}, \Psi)\in Y$ for all $\Psi \in Y_0$. Moreover, since $f$ is an irrotational field, we have that $F(\overline{u}, \Phi)$ is symmetric.
	


	Furthermore, these controls can be chosen satisfying
	\begin{equation}\label{est_v_w}
		\|v_{(\Phi,\Psi)}\|_{H^1(L^2)}+\|w_{(\Phi,\Psi)}\|_{H^1(L^2)}
		+\|v_{(\Phi,\Psi)}\|_{L^\infty(H^1)}+\|w_{(\Phi,\Psi)}\|_{L^\infty(H^1)}
		\leq C\left(\|z_*\|^2+\|h_*\|^2\right),	
	\end{equation}
	for some positive constant $C=C(\Oo,\omega,T,\overline{u},\overline{\te},\|\Phi\|_X, \|A\|_P,\|B\|_Q,\|F(\overline{u}, \Phi)\|_X,\|G(\overline{\theta}, \Psi)\|_Y)$.
	
	Next, since we can prove that the terms 
	$(\Phi\cdot\nabla)z_{(\Phi,\Psi)}$,  $(z_{(\Phi,\Psi)}\cdot\nabla)\overline{u}$, $(\overline{u}\cdot\nabla)z_{(\Phi,\Psi)}$,  $h_{(\Phi,\Psi)} e_n$ and  $ v_{(\Phi,\Psi)}\chi_\omega$ belong to $L^\infty(0,T;H^1(\Oo)^n)\cap H^1(0,T;L^2(\Oo)^n)$.
	Thanks to \eqref{cond_te_H3} and \eqref{compati_bound}, we see that $z_*\in H^3(\Oo)^n\cap L^2_{div}(\Oo)^n$ satisfies the compatibility condition $N(z_*)+\left[F(\overline{u},\Phi)(0,\cdot)z_*\right]_{tan}=0$.Hence, we can apply \cite[Proposition 1.2]{guerrero_CEL_NS} to deduce that
	\begin{equation*}
		z_{(\Phi,\Psi)}\in \widetilde{X}:=H^2(0,T;L^2_{div}(\Oo))\cap H^1(0,T;H^2(\Oo)^n\cap L^2_{div}(\Oo)). 
	\end{equation*}
	
	Likewise, one can prove that the terms $\Phi\cdot \nabla h_{(\Phi,\Psi)}$, $\overline{u}\cdot\nabla h_{(\Phi,\Psi)}$, $z_{(\Phi,\Psi)}\cdot \nabla \overline{\te}$ and $ w_{(\Phi,\Psi)}\chi_\omega$ belong to 
    	$L^\infty(0,T;H^1(\Oo))\cap H^1(0,T;L^2(\Oo))$ and, thanks to equations \eqref{cond_te_H3} and \eqref{compati_bound}, $h_*\in H^3(\Oo)$ satisfies the compatibility condition $R(h_*)+G(\overline{\theta}, \Psi)(0,\cdot)h_*=0$. Therefore, we have 
	\begin{equation*}
	h_{(\Phi,\Psi)}\in \widetilde{Y}:=H^2(0,T;L^2(\Oo))\cap H^1(0,T;H^2(\Oo)).
	\end{equation*}
	 Moreover, there exists a positive constant $\widetilde{C}=\widetilde{C}(\Oo,\omega,T,\overline{u},\overline{\te},\|\Phi\|_X,  \|A\|_P,\|B\|_Q,\|F(\overline{u}, \Phi)\|_X,\|G(\overline{\theta}, \Psi)\|_Y) $ such that
	\begin{equation}\label{C_tilde}
		\|(z_{(\Phi,\Psi)},h_{(\Phi,\Psi)})\|_{\widetilde{X}\times \widetilde{Y}}\leq \widetilde{C}\left( \|z_*\|_{H^3\cap W}+\|h_*\|_{H^3}   \right).
	\end{equation}
	 From well-known  interpolation  arguments, one has $\widetilde{X} \subset \subset X$ and $\widetilde{Y} \subset \subset Y$, where the notation $\subset \subset$ stands for compact embedding.
		
		For each $(\Phi,\Psi)\in X\times Y$, let  the set of admissible controls $\Lambda(\Phi,\Psi)$ be, by definition, the family of controls  
	$(v_{(\Phi,\Psi)},w_{(\Phi,\Psi)})\in \mathcal{H}$
	that drive the solution $(z_{(\Phi,\Psi)},h_{(\Phi,\Psi)})$ to zero at time $T$ and such that \eqref{est_v_w} holds.
		On the other hand, let us set 
	\begin{equation*}
		\mathcal{E}(\Phi,\Psi):=\left\{ (z_{(\Phi,\Psi)},h_{(\Phi,\Psi)}) \in\!  \widetilde{X}\times \widetilde{Y} \!:  
		(z_{(\Phi,\Psi)},h_{(\Phi,\Psi)}) \text{ solves \eqref{eq:Phi} with } (v_{(\Phi,\Psi)},w_{(\Phi,\Psi)})\in \!\Lambda(\Phi,\Psi)  \right\}.
	\end{equation*}
	Notice that $\mathcal{E}(\Phi,\Psi)\subset  \widetilde{X}\times \widetilde{Y} \subset\subset X\times Y$. 

		In what follows, we will prove that the set-valued mapping $\mathcal{E}\colon  X_0\times Y_0 \mapsto 2^{X_0\times Y_0}$ possesses at least one  fixed point. We will use the additional hypothesis 
	$\|(z_*,h_*)\|_{H^3\times H^3}\leq \delta$ for some sufficiently small $\delta$ depending on $\Oo$, $\omega$ and $T$ and we will apply Kakutani's Theorem. More precisely, we will check that the mapping $\mathcal{E}$ satisfies the following assumptions:
	\begin{enumerate}
		\item [i)] $\mathcal{E}(\Phi,\Psi)$ is a non-empty closed and convex set of $X_0\times Y_0 $ for all $(\Phi,\Psi) \in X_0\times Y_0 $;
		\item [ii)] There exists a convex compact set $K\subset X_0\times Y_0 $ such that $\mathcal{E}((\Phi,\Psi) )\subset K$ for all $(\Phi,\Psi) \in K$;
		\item [iii)] $\mathcal{E}(\Phi,\Psi)$
		is upper-hemicontinuous in $X_0\times Y_0$, i.e. 
		for any $\varUpsilon \in X_0^\prime\times Y_0^\prime $ the mapping
		\[
		(\Phi,\Psi)\mapsto\sup_{(\overline{\Phi},\overline{\Psi})\in \mathcal{E}(\Phi,\Psi)} \left\langle \,\varUpsilon,(\overline{\Phi},\overline{\Psi})\right\rangle_{X_0^\prime\times Y_0^\prime ,X_0\times Y_0 }
		\]
		is upper semicontinuous.
	\end{enumerate}
	
	Then, in view of Kakutani's Theorem, there exists $(z,h) \in K$ such that $(z,h)\in \mathcal{E}(z,h)$.\\
	
\noindent \textbf{Proof of assumption}  \text{i)} \textbf{of Kakutani's Theorem.} This is easy. Indeed, for every $(\Phi,\Psi) \in X_0\times Y_0$, $\mathcal{E}(\Phi,\Psi)$ is a non-empty set because of the null controllability property of \eqref{eq:Phi}. On the other hand, since  \eqref{eq:Phi} is linear, we readily have that $\mathcal{E}(\Phi,\Psi)$ is closed and convex. \\
	
\noindent \textbf{Proof of assumption}  \text{ii)} \textbf{of Kakutani's Theorem.} Let $R>0$ be given and let us introduce
	\[
	C(R):=\sup_{\|(\Phi,\Psi)\|_{X\times Y}\leq R}\widetilde{C}(\Oo,\omega,T,\overline{u},\overline{\te},\|\Phi\|_X, \|A\|_P,\|B\|_Q,\|F(\overline{u}, \Phi)\|_X,\|G(\overline{\theta}, \Psi)\|_Y),
	\]
	where $\widetilde{C}$ is the constant arising in \eqref{C_tilde}. If we choose $\delta\leq R/C(R)$ and from \eqref{C_tilde} and the fact 
	that $\widetilde{X}\times \widetilde{Y}\subset\subset X\times Y$, we see that $\mathcal{E}$ maps the closed convex set 
	$$\widetilde{K}=\left\{(\Phi,\Psi)\in X_0\times Y_0; \|(\Phi,\Psi)\|_{X\times Y}\leq R\right\}$$ into a compact set $K\subset \widetilde{K}$.\\
	
\noindent\textbf{Proof of assumption} \text{iii)} \textbf{of Kakutani's Theorem.} Let us prove that $\mathcal{E}$ is upper-hemicontinuous. In fact, let $\{(\Phi_k,\Psi_k)\}$ be such that 
	\[
	(\Phi_k,\Psi_k)\to (\Phi,\Psi) \quad\text{in} \quad X{_0}\times Y_0.
	\]
	From the compactness of $\mathcal{E}(\Phi_k,\Psi_k)$ into $X\times Y$, we deduce that
	there exist $(z_k,h_k)\in \mathcal{E}(\Phi_k,\Psi_k)$ for $k=1,2,\cdots$ such that
	\[
	\sup_{(\overline{\Phi},\overline{\Psi})\in \mathcal{E}(\Phi_k,\Psi_k)} \left\langle \,\varUpsilon,(\overline{\Phi},\overline{\Psi})\right\rangle_{X^\prime\times Y^\prime,X\times Y}
	=\left\langle \,\varUpsilon,(z_k,h_k)\right\rangle_{X^\prime\times Y^\prime,X\times Y}\quad \forall \, k\geq1.
	\]
	We can choose a subsequence $\{(\Phi_{k^\prime},\Psi_{k^\prime})\}$ such that 
	\[
	\limsup_{k\to \infty}\sup_{(\overline{\Phi},\overline{\Psi})\in \mathcal{E}(\Phi_{k},\Psi_{k})} \left\langle \,\varUpsilon,(\overline{\Phi},\overline{\Psi})\right\rangle_{X^\prime\times Y^\prime,X\times Y}
	=\lim_{k^\prime\to \infty}
	\left\langle \,\varUpsilon,(z_{k'},h_{k'})\right\rangle_{X^\prime\times Y^\prime,X\times Y}.
	\]
	Denote by $(v_{k^\prime},w_{k^\prime})\in \Lambda(\Phi_{k^\prime},\Psi_{k^\prime})$ controls associated to $(z_{k^\prime},h_{k^\prime})$ solution of following systems:
	\begin{equation*}
	\left\{
	\begin{array}{lll}
    		\partial_t z_{k^\prime}-\Delta z_{k^\prime}+(\Phi_{k^\prime}\cdot\nabla)z_{k^\prime}+(z_{k^\prime}\cdot\nabla)\overline{u}+(\overline{u}\cdot\nabla)z_{k^\prime}+\nabla q_{k^\prime} = h_{k^\prime} e_n + v_{k^\prime}\chi_\omega  &\hbox{in} & \mathcal{O}_T,\\
    		\noalign{\smallskip}\dis
    	    \partial_t h_{k^\prime}-\D h_{k^\prime}+\Phi_{k^\prime}\cdot \nabla h_{k^\prime}+\overline{u}\cdot\nabla h_{k^\prime}+z_{k^\prime}\cdot \nabla \overline{\te}=w_{k^\prime}\chi_\omega  &\hbox{in} & \mathcal{O}_T,\\
    		\noalign{\smallskip}\dis
    		\nabla\cdot  z_{k^\prime}=0 &\hbox{in} &\mathcal{O}_T,\\
    		\noalign{\smallskip}\dis
    		z_{k^\prime}\cdot \nu=0, \quad N(z_{k^\prime})+[F(\overline{u},\Phi_{k^\prime})z_{k^\prime}]_{tan}=0&\hbox{on} & \varLambda_T,\\
    		\noalign{\smallskip}\dis
    		R(h_{k^\prime})+G(\overline{\theta},\Psi_{k^\prime})h_{k^\prime}=0&\hbox{on} & \varLambda_T,\\
    		\noalign{\smallskip}\dis
    		z_{k^\prime}(0,\cdot\,)=z_*, \quad h_{k^\prime}(0,\cdot\,)=h_* &\hbox{in} & \mathcal{O}.
  	\end{array}
    	\right.
	\end{equation*}
	{Then, using the fact the $F(\overline{u},\Phi_{k^\prime})\to F(\overline{u},\Phi)$ in $X$ and $G(\overline{\theta},\Psi_{k^\prime})\to G(\overline{\theta},\Phi)$ in $Y$, we find that the constants in \eqref{est_v_w} and \eqref{C_tilde} can be chosen independent of $k^\prime$. 
	Therefore, the compact embedding $\widetilde{X}\times \widetilde{Y} \subset\subset X\times Y$, together with the estimates \eqref{est_v_w} and \eqref{C_tilde}, guarantees that, at least for a subsequence, 
	we have
	\[
	(z_{k^\prime},h_{k^\prime})\to (z,h) \quad\text{in}\quad X\times Y,
	\]
	\[
	v_{k^\prime}\to v \quad\text{weakly in}\quad H^1(0,T; L^2(\Oo)^n) \cap L^\infty(0,T;H^1(\Oo)^n),
	\]
	\[
	w_{k^\prime}\to  w \quad\text{weakly in}\quad H^1(0,T; L^2(\Oo)) \cap L^\infty(0,T;H^1(\Oo)).
	\]}
	 It is not difficult to conclude that $(v, w)\in \Lambda(\Phi,\Psi)$ and that $ (z,h)\in \mathcal{E}(\Phi,\Psi)$. Therefore, one has:
	\[
	\begin{array}{ll}
	\limsup_{k\to \infty}\sup_{(\overline{\Phi},\overline{\Psi})\in \mathcal{E}(\Phi_{k},\Psi_{k})} \left\langle \,\varUpsilon,(\overline{\Phi},\overline{\Psi})\right\rangle_{X^\prime\times Y^\prime,X\times Y}	\!\!&=\left\langle \,\varUpsilon,(z,h)\right\rangle_{X^\prime\times Y^\prime,X\times Y}\\
	\!\!&\leq \sup_{(\bar z,\bar h)\in \mathcal{E}(\Phi,\Psi)} \left\langle \,\varUpsilon,(\bar z,\bar h)\right\rangle_{X^\prime\times Y^\prime,X\times Y}.
	\end{array}
	\]
	This proves the upper-hemicontinuity of $\mathcal{E}$.\\
	
	\indent Thus, we have proved that $\mathcal{E}$ has a fixed-point $(z,h)$ and this achieves the proof of Proposition~\ref{local_non_lin}.
\end{proof}


	\section{Global controllability to the trajectories}\label{control_trajec}

This section is devoted to  explain how the previous arguments can be chained in order to prove our main result, that is, Theorem \ref{theo_main}.\\
\indent First, we reduce the controllability to weak trajectories to controllability smooth trajectories as follows.\\
\indent Despite that $(\overline{u},\overline{p},\overline{\te})$ is only a weak solution in $[0,T]$, there exists an interval time $[T_1,T_2] \subset (0,T)$ such that $(\overline{u},\overline{p},\overline{\te})$ is smooth in $[T_1,T_2]$. Then, we can start our control strategy by doing nothing in $[0,T_1]$, that is, taking $v=w=\sigma = 0$ in \eqref{def_solu}, and wait for the reference trajectory to be regularized. Thus, the weak trajectory will move from $(u_*,\te_*)$ to some $(u,\te)(T_1,\cdot\,)$, that will be considered the new initial data. 
Hence, without loss of generality, we can 
work with a smooth reference trajectory.\\

\indent We split the control strategy into four steps.\\

\noindent \textbf{Step 1: Regularization of the data.} We begin by extending $\Omega$ to a new domain $\Oo$, as explained in Section \ref{sec:domainextension}. We also use Proposition \ref{prop_extension} to guarantee the existence of $(u_*, \theta_*, \sigma_*) \in L^2(\Oo)^n \times L^2(\Oo) \times C^{\infty}_c(\omega_0)$ satisfying \eqref{eq:extension}. We set $\sigma(t,x) := \beta(t/T) \sigma_*(x)$ with $\beta$ a smooth decreasing function such that $\beta \equiv 1$ near 0 and $\beta\equiv0$ near $1/8$. The function $\sigma$ must satisfy the compatibility condition $\nabla \cdot u_* = \sigma(0,\cdot\,)$. Then, we let system \eqref{def_solu} evolve with $v=w=0$ in the time interval $(0,T/8)$ in order to reach some data $(u,\theta)(T/8, \cdot\,) \in L_{div}^2(\Oo)^n \times L^2(\Oo)$. Next, by using the smoothing effect of the uncontrolled Boussinesq system starting from divergence free data (see Lemma \ref{lemma9}), we deduce that there exists $T_1 \in (0,T/4)$ such that $(u, \theta)(T_1,\cdot\,) \in H^3(\Oo)^{n} \cap L_{div}^{2}(\Oo)^n \times H^3(\Oo)$. Accordingly, we can apply Lemma \ref{lem:constructionu1theta^1}.\\

\noindent \textbf{Step 2: Global approximate controllability result in $L^2(\Oo)$.} Let us set $T_2 := T/2$. Starting from the new initial data $(u,\theta)(T_1,\cdot\,)$, we use the global approximate controllability result stated in Proposition \ref{lemma_trajec_aprox} in a time interval of size $T_2 - T_1 \geq T/4$. Thus, for any $\delta >0$, we can build a trajectory starting from $(u,\theta)(T_1,\cdot\,)$ and such that 
\[\|(u, \theta)(T_2,\cdot\,) - (\overline{u}, \overline{\theta})(T_2,\cdot\,)\| \leq \delta.\] 
In particular, we can find $\delta$ small enough such that \begin{equation}
\label{eq:Psileqdelta}
\Psi_{T/4}(\delta) \leq \delta_{T/4},
\end{equation}
where $\delta_{T/4}$ is the radius of local controllability result given in Proposition \ref{local_non_lin} , for $f= 0$ and $g = 0$, and the function $\Psi_{T/4}$ appears in the regularity result for the free Boussinesq system; see Lemma \ref{lemma9}. \\

\noindent \textbf{Step 3: Regularizing argument.} Now, we use again Lemma \ref{lemma9} to obtain the existence of a time $T_3 \in (T/2, 3T/4)$ 
 such that 
 \[
 \|(u,\theta)(T_3, \cdot\,) - (\overline{u},\overline{\theta})(T_3, \cdot\,)\|_{H^3} \leq \Psi_{T/4}(\delta) \leq \delta_{T/4}. 
 \]

\noindent \textbf{Step 4: Local controllability in $H^3(\Oo)$.} Finally, we use the local controllability result in $[T_3, T_3 + T/4]$,  
and get
$$(u,\theta)(T_3 + T/4, \cdot\,)= (\overline{u}, \overline{\theta})(T_3 + T/4,\cdot\,).$$
Then, extending the control by zero for $t \in [T_3 + T/4]$, we obtain  \eqref{eq:fin_cond} and  the proof is complete.
\\

\section{Comments and open questions
}\label{AAA}

\subsection{Controlling with less controls}

A natural extension of our main result would be the global exact controllability with a reduced number of controls acting on a small part of the boundary. Unfortunately, in this situation, one cannot use the extension domain technique.\\ 
\indent { However, in the spirit of \cite{Cara_NS_BOU6,guerrero_motoya}, one could try to establish a small-time global null controllability for the internal control system \eqref{def_solu} in $2$-D by acting only on the temperature. Roughly speaking, the intuition behind a result of this kind is the following: the temperature $\theta$ is directly controlled by $w$, then $\theta$ acts as an indirect control through the coupling term $\theta e_2$ to control the component $u_2$, then $u_2$ acts also as an indirect control through the incompressibility condition to control the component $u_1$. Results of this kind will be analyzed in a forthcoming paper.\\
\indent One could also try get the local control result acting only on the motion equation, that is, with $w=0$ in \eqref{equa_local}. However, at least in the case of Neumann boundary conditions for $\theta$, that is, with $m\equiv0$ and $g\equiv0$, the system does not seem to be controllable. To justify this assertion, note that, by integrating in $ \mathcal{O}$ the equation satisfied by $\theta$, integrating by parts and using the incompressibility and impermeability conditions, we find that the total mass of $\theta$ is conserved:
\begin{equation*}
	\int_\Oo \te(T,\cdot\,)=\int_\Oo \te_*.
\end{equation*}
Therefore, we cannot expect general null controllability. 


\subsection{Other boundary conditions}

Another natural question is if Theorem \ref{theo_main} holds with $u$ and $\te$ subject to other boundary conditions. 
\\
\indent For Dirichlet boundary conditions on the temperature, this is an interesting open problem. As noticed for the slip case in Remark \ref{remark:dirichlet}, the main difficulty is to obtain good estimates for the remainder terms.\\
\indent When we consider Dirichlet boundary conditions for the velocity, we face a challenging open problem. This is related to a well know conjecture by Jacques-Louis Lions.
  As pointed in \cite{coron_marbach}, the boundary layer has a behavior which is not good as in the case of Navier boundary conditions. This implies many difficulties to estimate the boundary layer profiles and the remainder terms.

\begin{appendices}

\section[Existence result]{Sketch of the proof of Proposition \ref{prop_existence}}
\label{appendix_existence}
In this section, we give a sketch of the proof of Proposition \ref{prop_existence}.

First, since the divergence source term is smooth, we start by solving a Stokes problem in order to lift the non homogeneous 
	divergence condition. To do that, we define $(u_\sigma,p_\sigma)$ as the solution to:
\begin{equation*}
    \left\{
    \begin{array}{lll}
    		\partial_t u_\sigma-\Delta u_\sigma+\nabla p_\sigma = 0 &\hbox{in}& \mathcal{O}_T,\\
    \noalign{\smallskip}\dis
    		\nabla\cdot u_\sigma=\sigma&\hbox{in}& \mathcal{O}_T,\\
    	\noalign{\smallskip}\dis
    		u_\sigma\cdot \nu=0, \quad N(u_\sigma)=0&\hbox{on}&\varLambda_T,\\
    \noalign{\smallskip}\dis
    		u_\sigma(0,\cdot\,)=0&\hbox{in}& \mathcal{O}.
    \end{array}
    \right.
\end{equation*}
Smoothness (in time and space) of $\sigma$ immediately gives smoothness on $u_\sigma$. These are standard maximal regularity estimates for the Stokes problem in the case of the Dirichlet boundary condition. For Navier slip-with-friction boundary conditions, we refer to  \cite{shimada}, \cite{shibata1} and \cite{shibata2}. Then, by using Sobolev embeddings, we get that there exists a positive constant $C>0$ depending on $\sigma$ such that
\begin{equation}
\label{eq_maximalregularityStokes}
\|u_{\sigma}\|_{L^{\infty}(0,T;W^{1,\infty}(\Oo))} \leq C.
\end{equation}
\indent Decomposing $u = u_{\sigma}+ u_h$ and $p = p_{\sigma}+ p_h$, we obtain the following system for $(u_h,p_h, \theta)$:
\begin{equation}\label{u2}
    \!\!\!\!\left\{\!\!\!
    \begin{array}{lll}
    \partial_t u_h\!- \!\Delta  u_h+( u_\sigma\cdot\nabla) u_h \!+( u_h\cdot\nabla) u_{\sigma}\!+(  u_h\cdot\nabla)  u_h \!+\! \nabla p_h
    \!= \!\te e_n + v - (u_\sigma\cdot\nabla)u_\sigma\!\!&\hbox{in}&\! \mathcal{O}_T,\\
    \noalign{\smallskip}\dis
    \partial_t \te-\D \te+(u_h+u_\sigma)\cdot \nabla \te=w &\hbox{in} &\! \mathcal{O}_T,\\
    \noalign{\smallskip}\dis
		\nabla\cdot u_h=0&\hbox{in}&\! \mathcal{O}_T,\\
	 \noalign{\smallskip}\dis
	  u_h\cdot \nu=0, \quad N(u_h)=0&\hbox{on}&\! \varLambda_T,\\
	       \noalign{\smallskip}\dis
	    R(\te)=0&\hbox{on} &\! \varLambda_T,\\
    		\noalign{\smallskip}\dis
	   u_h(0,\cdot\,)=u_{*},\quad \te(0,\cdot\,)=\te_*&\hbox{in}&\! \mathcal{O}.
    \end{array}
\right.
\end{equation}
	So, it is sufficient to obtain the existence result for the system \eqref{u2}.
We define weak solutions to~\eqref{u2} as follows.

Recall that
\begin{equation*}
  W_{T} (\Oo) = [C_{w}^0([0,T];L_{div}^2(\Oo)^n) \cap L^2(0,T;H^1(\Oo)^n)] \times [C_w^0([0,T];L^2(\Oo)) \cap L^2(0,T;H^1(\Oo))].
\end{equation*}
We say that $(u_h, \theta) \in W_{T} (\Oo)$ is a weak solution to \eqref{u2} if it satisfies the following:
\begin{align}
     &- \iint_{\Oo_T} u_h \partial_t \phi + \iint_{\Oo_T} ((u_{\sigma}\cdot \nabla ) u_h + (u_h \cdot \nabla) u_{\sigma} + (u_h \cdot \nabla) u_h) \phi + 2 \iint_{\Oo_T}D(u_h)\cdot D(\phi) \notag\\
     & =  \int_{\Oo} u_{*} \cdot \phi(0,x) - 2 \iint_{\partial\Oo_T} (M u_h) \cdot \phi+  \iint_{\Oo_T} (v -(u_{\sigma} \cdot\nabla ) u_{\sigma}) \phi + \iint_{\Oo_T} \theta e_n \phi,
\end{align}
and
\begin{align}
     &- \iint_{\Oo_T} \theta \partial_t \psi + \iint_{\Oo_T} ((u_{h}\cdot \nabla \theta ) + (u_{\sigma} \cdot \nabla \theta) )\psi + \iint_{\Oo_T} \nabla \theta \cdot \nabla \psi\notag\\
     &\qquad  = \int_{\Oo} \theta_{*} \psi(0,x) -\iint_{\partial\Oo_T}m\theta \psi+ \iint_{\Oo_T} w \psi,
\end{align}
for any which is divergence free and tangent to $\partial\Oo$ function $\phi \in C_{c}^{\infty}([0,T)\times\overline{\Oo})^n$ and any $\psi \in C_{c}^{\infty}([0,T)\times\overline{\Oo})$. We moreover require that they satisfy the so-called {\it strong energy inequality} for almost every $t\in (0,T)$
\begin{equation}\label{eq:energy_inequality}
\begin{array}{l}
\dis    \|u_h(t,\cdot\,)\|^2 + \|\theta(t,\cdot\,)\|^2+ 4 \iint_{\Oo_t} |D(u_h)|^2   + 2 \iint_{ \Oo_t} |\nabla \theta|^2 \leq \|u_h(0,\cdot\,)\|^2 +\|\theta(0,\cdot\,)\|^2\\ 
    \noalign{\smallskip}\dis
   \qquad\qquad   -4 \iint_{\partial\Oo_t} (M u_h) \cdot u_h-2\iint_{\partial\Oo_t}m|\theta|^2\\
   \noalign{\smallskip}\dis
     \qquad\qquad+2\iint_{ \Oo_t} \left[\sigma |u_h|^2 - (u_h \cdot \nabla ) u_{\sigma} \cdot u_h + (v - (u_{\sigma} \cdot \nabla)u_{\sigma} + \theta e_n) \cdot u_{h} + \sigma |\theta|^2 +  w \theta\right].
    \end{array}
    \end{equation}
\indent \textit{Proof of the existence of solutions to \eqref{u2}}. We recall the following identity, 
which will be used throughout the paper: 
\begin{equation}
    \label{eq:identity_useful}
    -\int_{\Oo} \Delta \tilde{u} \cdot \tilde{v} = 2 \int_{\Oo} D(\tilde{u}) \cdot D (\tilde{v}) - 2 \int_{\partial\Oo} [D(\tilde{u})\nu 
  ]_{tan} \cdot \tilde{v},
\end{equation}
where $\tilde{u}$ and $\tilde{v}$ are smooth vector fields such that $\tilde{v}$ is divergence free and tangent to the boundary. Therefore, using above $\phi=u_h$ and $\psi=\theta$, we obtain formally the energy equality
\eqref{eq:energy_inequality} replacing $\leq$ by $=$. We can get a bound of the right hand side term of \eqref{eq:energy_inequality} by using a $L^{\infty}$ bound of $\sigma$ and \eqref{eq_maximalregularityStokes}.
Thus, we deduce that there exists a positive constant $C>0$ depending on $\sigma$, $v$ and $w$ such that 
\begin{align}
\label{eq:energy_inequalityBis}
    &\|u_h(t,\cdot\,)\|^2 + \|\theta(t,\cdot\,)\|^2+ 4 \iint_{\Oo_t} |D(u_h)|^2   + 2 \iint_{\Oo_t} |\nabla \theta|^2\notag\\
    &\leq C \Bigg(  \|u_h(0,\cdot\,)\|^2 + \|\theta(0,\cdot\,)\|^2   +\iint_{\Oo_t} |u_h|^2 + |\theta|^2\Bigg){-  4 \iint_{\partial\Oo_t} (M u_h) \cdot u_h- 2\iint_{\partial\Oo_t}m|\theta|^2}.
    \end{align}
From \eqref{eq:energy_inequalityBis}, and Gronwall Lemma, we obtain an a priori bound for $(u_h,\theta)$ in $L^{\infty}(0,T;L^2(\Oo)^n \times L^2(\Oo))$. Before continuing, let us recall the following Korn inequality.
\begin{lema}\label{lemma:korn_inequality}[Second Korn inequality]
There exist two positive constants $C_1, C_2 >0$ such that, for every $u \in H^1(\Oo)^n$, one has 
\begin{equation}
    \label{eq:korn}
    C_1 \left( \|u\| + \|D(u)\| \right) \leq  \|u\|_{H^1} \leq C_2 \left( \|u\| + \|D(u)\| \right).
\end{equation}
\end{lema}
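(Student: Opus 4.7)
The plan is to handle the two inequalities in (\ref{eq:korn}) separately. The right-hand bound is immediate: since $D_{ij}(u) = \tfrac{1}{2}(\partial_i u_j + \partial_j u_i)$, one has $\|D(u)\| \leq \|\nabla u\|$, so that $\|u\| + \|D(u)\| \leq 2\,\|u\|_{H^1}$ and we may take $C_1 = 1/2$. The substantive content is therefore the left-hand inequality $\|u\|_{H^1} \leq C_2(\|u\| + \|D(u)\|)$, i.e., the classical second Korn inequality on a bounded smooth domain.

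For this direction, my plan is to invoke Ne\v{c}as' lemma (also known as Lions' lemma): on any bounded domain with Lipschitz boundary, a distribution $v \in \mathcal{D}'(\Oo)$ belongs to $L^2(\Oo)$ whenever $v \in H^{-1}(\Oo)$ and $\nabla v \in H^{-1}(\Oo)^n$, with the quantitative estimate
\[
\|v\|_{L^2(\Oo)} \leq C\bigl(\|v\|_{H^{-1}(\Oo)} + \|\nabla v\|_{H^{-1}(\Oo)^n}\bigr).
\]
I will apply this to $v = \partial_k u_i$ for each pair $(i,k)$. Since $u \in L^2(\Oo)^n$, each $\partial_k u_i$ lies in $H^{-1}(\Oo)$ with norm dominated by $\|u\|$. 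The key algebraic ingredient, verified first for smooth $u$ and then extended by density, is the identity
\[
\partial_j \partial_k u_i = \partial_k D_{ij}(u) + \partial_j D_{ik}(u) - \partial_i D_{jk}(u),
\]
which expresses the Hessian of each component of $u$ as first-order distributional derivatives of the deformation tensor. Consequently, whenever $D(u) \in L^2(\Oo)^{n\times n}$, the distributional gradient $\nabla(\partial_k u_i)$ lies in $H^{-1}(\Oo)^n$ with norm bounded by $C\|D(u)\|$. Plugging these bounds into Ne\v{c}as' estimate and summing over $i,k$ yields $\|\nabla u\| \leq C(\|u\| + \|D(u)\|)$, which together with the trivial $\|u\| \leq \|u\| + \|D(u)\|$ delivers the announced inequality.

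The only genuinely delicate ingredient is Ne\v{c}as' lemma itself, which is what makes the proof depend on the regularity of the boundary; Lipschitz regularity suffices, and our $\Oo$ is smooth by assumption. All remaining steps are algebraic identities and standard density arguments from $C^\infty(\overline{\Oo})^n$ to $H^1(\Oo)^n$. An alternative route is a compactness--contradiction argument: assume the inequality fails along a normalized sequence $(u_n)$ with $\|u_n\|_{H^1}=1$ and $\|u_n\|+\|D(u_n)\|\to 0$, extract via Rellich an $L^2$-convergent subsequence, and reach a contradiction using the vanishing of the rigid-motion kernel together with $\|u_n\|\to 0$. This is shorter but non-constructive, whereas the Ne\v{c}as route yields a concrete dependence of $C_2$ on the geometry of $\Oo$.
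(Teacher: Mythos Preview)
Your proof is correct. The paper, however, does not actually prove this lemma: it simply recalls the second Korn inequality as a classical result and uses it without argument. Your approach via Ne\v{c}as' lemma and the algebraic identity $\partial_j\partial_k u_i = \partial_k D_{ij}(u) + \partial_j D_{ik}(u) - \partial_i D_{jk}(u)$ is one of the standard textbook proofs (see, e.g., Duvaut--Lions or Ciarlet), and the trivial direction $C_1=1/2$ is handled correctly. There is nothing to compare against in the paper itself; you have supplied what the authors omitted.
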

By using the previous a priori bound for $(u_h,\theta)$ in $L^{\infty}(0,T;L^2(\Oo)^{n+1})$, the estimate \eqref{eq:energy_inequalityBis}
and the second Korn inequality, we also obtain an a priori bound in $L^2(0,T;H^1(\Oo)^{n+1})$. A standard Galerkin procedure implies the existence of a solution with this regularity. 

We next justify that this solution can be assumed to verify the energy inequality. We recall the standard argument to justify the energy inequality. Let $(u_h^N, \theta^N)$ be the approximate solution obtained via the Galerkin method. We write the energy inequality \eqref{eq:energy_inequality} that holds true for $(u_h^N, \theta^N)$ and pass to the limit as $N \rightarrow + \infty$. We observe that the right-hand side converges, because $(u_h^N, \theta^N)$ converges strongly to $(u_h, \theta)$ in $L^2(\Oo_T)$ as $N \rightarrow + \infty$; this is a consequence the two previous bounds and, for instance, Aubin-Lions Lemma. For the left-hand side, it is enough to use convexity, lower semicontinuity of the norms and weak convergence.

\section[Regularity estimates]{Proof of regularity for the uncontrolled Boussinesq system}\label{appen:B}
\label{appendix_regularity}

	Let us present the proof of Lemma \ref{lemma9}. In the following, we will use Korn's inequality recurrently, see Lemma \ref{lemma:korn_inequality}. We will also need the following results:
\begin{lema}\label{lemma:boundary_inequality}
There exist positive constants $C_l, C_r, K >0$ such that, for every $u \in H^1(\Oo)^n$, we have 
\begin{equation}
    \label{eq:boundary_norm}
    C_l \|u\|_{K,M}\leq  \|u\|_{H^1} \leq C_r \|u\|_{K,M},
\end{equation}
    where $\displaystyle\| u\|_{K,M}:=\left(K\|u\|^2+\int_{\partial \Oo}Mu\cdot u + \|D(u)\|^2\right)^{1/2}$.
\end{lema}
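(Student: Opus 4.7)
The plan is to establish the two sides of the equivalence separately, with the left inequality (coercivity of $\|\cdot\|_{K,M}$) being the substantive one.

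For the upper bound $\|u\|_{K,M} \leq C_r \|u\|_{H^1}$, I would just collect three elementary facts: (i) $\|u\|^2 \leq \|u\|_{H^1}^2$ trivially; (ii) $\|D(u)\|^2 \leq \|\nabla u\|^2 \leq \|u\|_{H^1}^2$ by the pointwise bound $|D(u)| \leq |\nabla u|$; and (iii) the boundary integral is controlled via the continuous trace map $H^1(\Oo) \hookrightarrow L^2(\partial\Oo)$ and the assumed $L^\infty$ bound on $M$, namely
\begin{equation*}
\left|\int_{\partial\Oo} Mu\cdot u\right| \leq \|M\|_{L^\infty(\partial\Oo)} \|u\|_{L^2(\partial\Oo)}^2 \leq C\|M\|_{L^\infty(\partial\Oo)} \|u\|_{H^1}^2.
\end{equation*}
Adding these three estimates gives $\|u\|_{K,M}^2 \leq C_r^2 \|u\|_{H^1}^2$ with $C_r^2 = K+1+C\|M\|_{L^\infty}$.

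For the lower bound $\|u\|_{H^1} \leq C_l^{-1} \|u\|_{K,M}$, the strategy is to show that $\|u\|^2 + \|D(u)\|^2$ is dominated by $\|u\|_{K,M}^2$, and then to invoke the second Korn inequality (Lemma \ref{lemma:korn_inequality}) to conclude. The one quantity to control is the boundary term, which may be negative since $M$ is only symmetric, not necessarily nonnegative. I would use the interpolation-trace inequality: for every $\epsilon > 0$ there exists $C_\epsilon > 0$ such that
\begin{equation*}
\|u\|_{L^2(\partial\Oo)}^2 \leq \epsilon \|\nabla u\|^2 + C_\epsilon \|u\|^2.
\end{equation*}
Combining with Korn's inequality $\|\nabla u\|^2 \leq 2C_2^2 (\|u\|^2+\|D(u)\|^2)$ and choosing $\epsilon$ so that $2\epsilon C_2^2 \|M\|_{L^\infty} \leq 1/2$, I get
\begin{equation*}
\int_{\partial\Oo} Mu\cdot u \geq -\tfrac{1}{2}\|D(u)\|^2 - C'_\epsilon \|u\|^2.
\end{equation*}
Hence
\begin{equation*}
K\|u\|^2 + \int_{\partial\Oo} Mu\cdot u + \|D(u)\|^2 \geq (K - C'_\epsilon) \|u\|^2 + \tfrac{1}{2} \|D(u)\|^2.
\end{equation*}
Fixing $K > C'_\epsilon + 1$, the right-hand side is bounded below by a positive multiple of $\|u\|^2 + \|D(u)\|^2$, which by the left inequality in Lemma \ref{lemma:korn_inequality} dominates $c \|u\|_{H^1}^2$.

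The main obstacle is the indefiniteness of the boundary form $u \mapsto \int_{\partial\Oo} Mu\cdot u$: since $M$ is not assumed positive semidefinite, this quantity could in principle drive $\|u\|_{K,M}$ arbitrarily small even for $u$ with large $H^1$ norm. The role of the free parameter $K$ is precisely to absorb the worst-case negative contribution, and the role of the trace-interpolation inequality is to ensure that the negative contribution can be absorbed by $\|u\|^2 + \|D(u)\|^2$ (rather than by $\|\nabla u\|^2$ directly, which would not appear in $\|u\|_{K,M}$). Everything else is a direct application of standard trace and Korn estimates.
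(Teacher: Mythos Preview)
Your proposal is correct and follows essentially the same approach as the paper: the paper's proof is just a one-line pointer to the multiplicative trace-interpolation inequality $\|u\|_{L^2(\partial\Oo)} \leq C\|u\|^{1/2}\|u\|_{H^1}^{1/2}$ (which is equivalent, via Young's inequality, to the additive $\epsilon$-form you use) combined implicitly with Korn's inequality. You have simply filled in the details that the paper leaves to the reader, including the key observation that the free parameter $K$ is there precisely to absorb the possibly negative boundary contribution.
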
  
\begin{lema}\label{lemma:boundary_inequality_heat}
There exist positive constants $C_l, C_r, \gamma >0$ such that, for every $\theta \in H^1(\Oo)$, we have 
\begin{equation}
    \label{eq:boundary_norm}
    C_l \|\theta\|_{\gamma,m}\leq  \|\theta\|_{H^1} \leq C_r \|\theta\|_{\gamma,m},
\end{equation}
    where $\displaystyle\| \theta\|_{\gamma,m}:=\left(\gamma\|\theta\|^2+\int_{\partial \Oo}m|\theta|^2 + \|\nabla \theta\|^2\right)^{1/2}$.
\end{lema}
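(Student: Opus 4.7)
The plan is to derive both inequalities from a suitable trace inequality, the only subtlety being that the boundary term $\int_{\partial\Oo} m|\theta|^2$ has no definite sign, so the parameter $\gamma$ must be chosen large enough (depending on $\|m\|_{L^\infty(\partial\Oo)}$ and the trace constant) to guarantee that $\|\cdot\|_{\gamma,m}$ is in fact equivalent to $\|\cdot\|_{H^1}$.

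For the upper bound $\|\theta\|_{H^1} \le C_r \|\theta\|_{\gamma,m}$, I would use the $\varepsilon$-version of the trace inequality: for any $\varepsilon>0$ there is $C_\varepsilon>0$ such that $\|\theta\|_{L^2(\partial\Oo)}^2 \le \varepsilon \|\nabla \theta\|^2 + C_\varepsilon \|\theta\|^2$ for all $\theta \in H^1(\Oo)$. Then
\[
\left|\int_{\partial\Oo} m |\theta|^2 \right| \le \|m\|_{L^\infty(\partial\Oo)}\bigl(\varepsilon\|\nabla\theta\|^2 + C_\varepsilon \|\theta\|^2\bigr),
\]
so
\[
\|\theta\|_{\gamma,m}^2 \ge \bigl(\gamma - C_\varepsilon\|m\|_\infty\bigr)\|\theta\|^2 + \bigl(1 - \varepsilon\|m\|_\infty\bigr)\|\nabla\theta\|^2.
\]
Choosing $\varepsilon = 1/(2\|m\|_\infty + 1)$ and then $\gamma$ large enough so that $\gamma - C_\varepsilon\|m\|_\infty \ge 1/2$, both coefficients on the right are bounded below by a positive constant, which yields $\|\theta\|_{\gamma,m}^2 \ge c\,\|\theta\|_{H^1}^2$.

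For the reverse inequality $C_l \|\theta\|_{\gamma,m} \le \|\theta\|_{H^1}$, the argument is more direct: the continuity of the trace operator $H^1(\Oo) \hookrightarrow L^2(\partial\Oo)$ gives $\|\theta\|_{L^2(\partial\Oo)}^2 \le C_T \|\theta\|_{H^1}^2$, whence
\[
\|\theta\|_{\gamma,m}^2 \le \gamma\|\theta\|^2 + \|m\|_\infty C_T \|\theta\|_{H^1}^2 + \|\nabla\theta\|^2 \le \bigl(\gamma + \|m\|_\infty C_T + 1\bigr)\|\theta\|_{H^1}^2,
\]
which is the desired inequality with $C_l^{-2} = \gamma + \|m\|_\infty C_T + 1$. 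There is no real obstacle here beyond bookkeeping; the statement is structurally identical to Lemma \ref{lemma:boundary_inequality} for the velocity field, and the only thing that changes is that we have no need to invoke Korn's inequality (because we work directly with $\nabla \theta$ rather than $D(\theta)$). The proof of $\|\cdot\|_{\gamma,m}$ being a well-defined norm (non-degeneracy) is automatic once equivalence with $\|\cdot\|_{H^1}$ is established.
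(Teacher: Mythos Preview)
Your proof is correct and follows essentially the same route as the paper: the paper indicates that the proof rests on the interpolation inequality $\|\theta\|_{L^2(\partial\Oo)} \leq C\|\theta\|^{1/2}\|\theta\|_{H^1}^{1/2}$ from \cite[Theorem~III.2.36]{boyer}, which via Young's inequality is exactly the $\varepsilon$-trace inequality you invoke. Your handling of the sign-indefinite boundary term by choosing $\gamma$ large is precisely the point, and your observation that no Korn inequality is needed here (in contrast to Lemma~\ref{lemma:boundary_inequality}) is apt.
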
 
	The proofs of the two above Lemmas rely on the interpolation inequality \cite[Theorem $III.2.36$]{boyer}. In particular, it is used that 
	there exists a positive constant $C$ such that 
\begin{equation*}
  	\|u\|_{L^2(\partial\mathcal{O})} \leq C \|u\|^{1/2} \|u\|_{H^1}^{1/2} \quad \forall u\in H^1(\mathcal{O}).
\end{equation*}

\begin{lema}[Proposition $III.2.35$, \cite{boyer}]\label{lemma:nonlinear_bound}
    Let $p\in[1,+\infty]$ and $q\in[p,p^*]$, where $p^*$ is the critical exponent associated with $p$. Then, there exists $C>0$
    such that
\[
\|u\|_{L^q}  \leq C\|u\|^{1+n/q-n/p}_{L^p}\|u\|^{n/p-n/q}_{W^{1,p}} \quad \forall u\in W^{1,p}(\Oo).
\]
\end{lema}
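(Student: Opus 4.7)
The plan is to recognize the statement as the classical Gagliardo--Nirenberg interpolation inequality for first-order Sobolev spaces on a bounded Lipschitz domain, and to obtain it by combining the Sobolev embedding at the critical endpoint with the log-convexity (H\"older) interpolation between $L^p$ spaces. I would not attempt a fresh proof from scratch; rather I would reduce the asserted inequality to two well-known ingredients and check the arithmetic of the exponents.

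First, I would dispose of the two endpoints $q=p$ and $q=p^*$. At $q=p$, the exponents $1+n/q-n/p$ and $n/p-n/q$ specialize to $1$ and $0$, and the inequality degenerates to $\|u\|_{L^p}\le C\|u\|_{L^p}$, which is trivial. At $q=p^*$ (when $p<n$, with $p^*=np/(n-p)$), the same exponents specialize to $0$ and $1$, and the inequality becomes exactly the Sobolev embedding $W^{1,p}(\Oo)\hookrightarrow L^{p^*}(\Oo)$, which holds on the bounded Lipschitz domain $\Oo$. Note that the two pairs of exponents sum to $1$, so the stated inequality is dimensionally consistent in the sense of scaling.

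Next, for an intermediate exponent $q\in(p,p^*)$, I would use the classical three-line/H\"older interpolation between $L^p$ and $L^{p^*}$: if $\lambda\in(0,1)$ is defined by $\tfrac{1}{q}=\tfrac{\lambda}{p}+\tfrac{1-\lambda}{p^*}$, then
\[
\|u\|_{L^q}\;\le\;\|u\|_{L^p}^{\lambda}\,\|u\|_{L^{p^*}}^{1-\lambda}.
\]
A short computation using $\tfrac{1}{p^*}=\tfrac{1}{p}-\tfrac{1}{n}$ gives $1-\lambda=n(\tfrac{1}{p}-\tfrac{1}{q})=\tfrac{n}{p}-\tfrac{n}{q}$ and consequently $\lambda=1+\tfrac{n}{q}-\tfrac{n}{p}$. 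Combining this with the Sobolev embedding $\|u\|_{L^{p^*}}\le C\|u\|_{W^{1,p}}$ produces precisely the stated inequality with the claimed exponents. The positivity of both exponents is guaranteed by the hypothesis $q\in[p,p^*]$.

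The only genuine subtlety is the case $p\ge n$, for which $p^*=+\infty$ is the ``critical'' exponent of Boyer's convention; the main obstacle is then to replace the embedding into $L^{p^*}$ by $W^{1,p}(\Oo)\hookrightarrow L^{\tilde q}(\Oo)$ for some finite $\tilde q>q$ (for $p=n$, any $\tilde q<\infty$; for $p>n$, the embedding into $L^{\infty}$) and to interpolate between $L^p$ and $L^{\tilde q}$ in the same manner, checking that the limiting exponents $\lambda$ and $1-\lambda$ still agree with $1+n/q-n/p$ and $n/p-n/q$ as $\tilde q\to p^*$. Since the result is quoted from \cite{boyer}, I would handle this case-split briefly and otherwise refer to that reference for the detailed construction of the extension/truncation needed on a bounded Lipschitz domain.
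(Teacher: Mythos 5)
The paper does not actually prove this lemma; it is quoted verbatim from Boyer--Fabrie (Proposition III.2.35), so there is no in-paper argument to compare against. Your reconstruction via the endpoint Sobolev embedding $W^{1,p}(\Oo)\hookrightarrow L^{p^*}(\Oo)$ followed by H\"older interpolation between $L^p$ and $L^{p^*}$ is correct and complete in the subcritical case $p<n$, and your exponent computation $\lambda=1+n/q-n/p$ checks out. Since every application of the lemma in the paper (Appendix B, where the authors restrict to $n=3$ and work with $p=2$, e.g.\ to bound $\|\nabla r\|_{L^3}$ and $\|r\|_{L^6}$) falls into this case, your argument covers what the paper actually needs.

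The gap is in your proposed treatment of $p\ge n$. Interpolating between $L^p$ and $L^{\tilde q}$ for a finite $\tilde q$ (or $L^\infty$ when $p>n$) does \emph{not} reproduce the stated exponents: the exponent you get on $\|u\|_{L^p}$ is $\mu=\bigl(\tfrac1q-\tfrac1{\tilde q}\bigr)/\bigl(\tfrac1p-\tfrac1{\tilde q}\bigr)$, and a short computation shows $\mu<1+n/q-n/p$ whenever $p\ge n$ (for instance, for $p>n$ and $\tilde q=\infty$ one gets $\mu=p/q$, which is strictly below the claimed exponent). Since $\|u\|_{L^p}\le\|u\|_{W^{1,p}}$, a smaller power on the $L^p$ factor yields a strictly weaker inequality, not the stated one. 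Nor can you recover the sharp exponents by letting $\tilde q\to p^*$ when $p=n$, because the embedding constant of $W^{1,n}(\Oo)\hookrightarrow L^{\tilde q}(\Oo)$ blows up as $\tilde q\to\infty$. In the regime $p\ge n$ the stated exponents are those of the genuine Gagliardo--Nirenberg inequality, whose proof requires the Nirenberg-type argument (apply the $W^{1,1}\hookrightarrow L^{n/(n-1)}$ estimate to powers $|u|^{\gamma}$ and optimize, then transfer to the bounded domain by extension) rather than plain $L^p$-interpolation. Deferring that case to the reference is reasonable, but the specific reduction you sketch would not close it.
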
  
{\begin{lema}
[Pages $490$-$494$, \cite{guerrero_CEL_NS}]\label{lemma:stokes_problem_steady}
  Let $f\in L^2(\Oo)^n$ and $g\in H^{1/2}(\partial\Oo)^n$. Then, there exists a unique strong solution $(u,p)\in H^{2}(\Oo)^n\times H^{1}(\Oo)$
  to the Stokes problem
\[
    \left\{
    \begin{array}{lll}
    - \Delta  u+\! \nabla p=f&\hbox{in}& \mathcal{O},\\
    \noalign{\smallskip}\dis
		\nabla\cdot u=0&\hbox{in}& \mathcal{O},\\
	 \noalign{\smallskip}\dis
	  u\cdot \nu=0, \quad N(u)=g&\hbox{on}& \partial\Oo,
    \end{array}
\right.
\]
and there exists a positive constant $C >0$ such that 
\begin{equation}
    \label{eq:operator_norm}
    \|u\|_{H^{2}}+\|p\|_{H^{1}}\leq C(\|f\|+\|g\|_{H^{1/2}}).
\end{equation}
	Moreover, if $f\in H^k(\Oo)^n$ and $g\in H^{k+1/2}(\partial\Oo)^n$ for some $k\geq0$, then $(u,p)\in H^{k+2}(\Oo)^n\times H^{k+1}(\Oo)$ and
    we have
\[
    \|u\|_{H^{k+2}}+\|p\|_{H^{k+1}}\leq C(\|f\|_{H^k}+\|g\|_{H^{k+1/2}}).
\]

\end{lema}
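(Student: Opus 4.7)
The plan is to obtain $(u,p)$ in three steps: a weak formulation handled by Lax--Milgram (plus Fredholm for the friction term), recovery of the pressure by De Rham's theorem, and a boundary--regularity argument giving the $H^2\times H^1$ estimate that then bootstraps to higher regularity.

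\textbf{Step 1: Variational formulation and existence in $V$.} Let $V:=\{v\in H^1(\Oo)^n:\nabla\cdot v=0\ \text{in}\ \Oo,\ v\cdot\nu=0\ \text{on}\ \partial\Oo\}$. Pairing formally $-\Delta u+\nabla p=f$ with $v\in V$ and using the identity \eqref{eq:identity_useful} plus the boundary condition $[D(u)\nu]_{\tan}=g-[Mu]_{\tan}$ gives the problem
\begin{equation*}
a(u,v):=2\int_\Oo D(u)\cdot D(v)+2\int_{\partial\Oo}Mu\cdot v\,d\Gamma=\int_\Oo f\cdot v+2\int_{\partial\Oo} g\cdot v\,d\Gamma=:\langle F,v\rangle,\qquad\forall v\in V.
\end{equation*}
Because $M$ need not be nonnegative, I would first add a zeroth-order term and work with the shifted form $a_K(u,v)=a(u,v)+2K(u,v)$. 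By Lemma \ref{lemma:boundary_inequality} (plus Korn), $a_K$ is continuous and coercive on $V$ for $K$ large enough, so Lax--Milgram yields a bounded solution operator $T_K:V'\to V$. The original problem becomes $(I-2K\iota T_K)u=T_K F$ in $V$, where $\iota:V\hookrightarrow V'$ is compact by Rellich. Uniqueness in the homogeneous problem $a(u,v)=0$ for all $v\in V$ (which, after testing with $v=u$ and invoking Korn, reduces to a standard spectral condition on the boundary operator $M$, included in the running hypotheses on the friction matrix) yields a trivial kernel, so the Fredholm alternative provides a unique $u\in V$ solving the variational problem, with $\|u\|_{H^1}\le C(\|f\|+\|g\|_{H^{1/2}})$.

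\textbf{Step 2: Recovery of the pressure.} Testing the variational identity against $v\in C^\infty_c(\Oo)^n$ with $\nabla\cdot v=0$ kills the boundary term and shows that the distribution $-\Delta u-f$ annihilates all divergence-free smooth compactly supported test fields. By De Rham's theorem, there exists $p\in L^2(\Oo)$, unique up to an additive constant, such that $-\Delta u+\nabla p=f$ in $\Oo$ in the distribution sense; the normalization $\int_\Oo p=0$ fixes the constant, and testing against general $v\in V$ recovers the Navier boundary condition $N(u)=g$.

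\textbf{Step 3: $H^2\times H^1$ regularity.} This is the main obstacle and is carried out locally. Interior regularity is standard from Cattabriga/Agmon--Douglis--Nirenberg: on any $\Oo'\subset\subset\Oo$, $(u,p)\in H^2(\Oo')^n\times H^1(\Oo')$ with the expected estimate. Near the boundary, I would cover $\partial\Oo$ by finitely many charts, straighten $\partial\Oo$ locally to $\{x_n=0\}$, and apply the Nirenberg difference-quotient technique in the tangential directions $x_1,\dots,x_{n-1}$: taking difference quotients $D_h^\tau$ of the variational identity and using the coercivity of $a_K$ on the space of tangentially admissible test fields yields a uniform bound on $D_h^\tau u$ in $H^1$, hence $\partial_\tau u\in H^1$ for every tangential direction $\tau$. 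The remaining normal derivative $\partial_n^2 u$ and $\partial_n p$ are then recovered algebraically from the equation $-\Delta u+\nabla p=f$ together with $\nabla\cdot u=0$: the incompressibility gives $\partial_n u_n$ from tangential derivatives, the momentum equation in the tangential components gives $\partial_n p$ up to tangential-derivative terms, and the normal component of the momentum equation then expresses $\partial_n^2 u_n$. The friction contribution $[Mu]_{\tan}$ is of lower order and, thanks to the smoothness of $M$, does not disturb the Lopatinskii condition associated to the principal part $N_0(u)=[D(u)\nu]_{\tan}$, which is known to be coercive for the Navier system (see, e.g., the treatment in the references cited for Proposition \ref{prop_existence}). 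Patching the local estimates with a partition of unity yields $(u,p)\in H^2(\Oo)^n\times H^1(\Oo)$ and the inequality \eqref{eq:operator_norm}.

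\textbf{Step 4: Bootstrap.} For $k\ge 1$, differentiate the equation and the boundary conditions, replace $u,p,f,g$ by their tangential derivatives, and repeat Step 3 in the resulting Stokes system. At each level the smoothness of $M$ lets the friction term absorb into the data, and induction produces $\|u\|_{H^{k+2}}+\|p\|_{H^{k+1}}\le C(\|f\|_{H^k}+\|g\|_{H^{k+1/2}})$. The main obstacle throughout is the boundary regularity in Step 3; once that is in hand, uniqueness, existence, and higher regularity follow by the standard machinery sketched above.
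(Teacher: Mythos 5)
The paper offers no proof of this lemma at all --- it is imported verbatim from \cite{guerrero_CEL_NS} (pages 490--494) --- so your proposal can only be judged on its own terms. Its architecture (variational formulation on the divergence-free tangential space $V$, De Rham for the pressure, tangential difference quotients in boundary charts plus algebraic recovery of the normal derivatives from the momentum equation and incompressibility, then a bootstrap) is the standard and correct route for Navier-type conditions, and Steps 2--4 are acceptable as a sketch, modulo a small transposition in Step 3: the \emph{tangential} components of the momentum equation deliver $\partial_n^2 u_\tau$ (once $\partial_\tau p$ is controlled), while it is the \emph{normal} component that yields $\partial_n p$; as written you have these roles reversed.

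The genuine gap is in Step 1, at the Fredholm alternative. You dispose of the kernel of the homogeneous problem by appealing to ``a standard spectral condition on the boundary operator $M$, included in the running hypotheses on the friction matrix.'' No such hypothesis exists anywhere in the paper: $M$ is only assumed smooth and symmetric, with no sign or nondegeneracy condition. Testing the homogeneous identity with $v=u$ gives $2\|D(u)\|^2+2\int_{\partial\Oo}Mu\cdot u=0$, and this does not force $u=0$ in general --- for instance, if $\Oo$ is a ball and $M\equiv 0$, every rigid rotation about the center is divergence-free, tangent to $\partial\Oo$, and satisfies $D(u)=0$ and $N(u)=0$, so the homogeneous problem has a nontrivial kernel and the asserted unique solvability fails. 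Any honest proof must either impose a condition on $(\Oo,M)$ excluding such rigid motions or, as in \cite{guerrero_CEL_NS}, work with a version of the operator carrying a zeroth-order (resolvent) term for which coercivity is restored. As it stands, your argument papers over the one point where the lemma is actually delicate by citing a hypothesis that is not there.
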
  }
{\begin{lema}\label{lemma:stokes_operator}
    Let $S: D(S)\to L^2_{\text{div}}(\mathcal{O})^n$ be the Stokes operator, where 
    $D(S)=\{v\in H^2(\mathcal{O})^n\cap L^2_{\text{div}}(\mathcal{O})^n: N(v)=0\}$ and $S:=-\mathbb{P}\Delta$.
There exists a positive constant $C >0$ such that, for every $u \in D(S)$, we have 
\begin{equation}
    \label{eq:operator_norm}
     \|u\|_{H^2} \leq C \left(  \|Su\|+\|u\|_{H^{1}} \right).
\end{equation}
	Moreover, if $S u\in H^k(\Oo)^n$ for some $k\geq0$, then $u\in H^{k+2}(\Oo)^n$ and
    we have
\[
    \|u\|_{H^{k+2}}\leq C(\|S u\|_{H^k}+\|u\|_{H^{k+1}}).
\]

\end{lema}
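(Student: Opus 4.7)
The plan is to reduce the statement to the stationary Stokes regularity estimate already recorded in Lemma~\ref{lemma:stokes_problem_steady}. Given $u\in D(S)$, set $f:=Su \in L^2_{\text{div}}(\Oo)^n$. Since $u\in H^2(\Oo)^n$, the Laplacian $\Delta u$ lies in $L^2(\Oo)^n$, and the Helmholtz decomposition produces $p\in H^1(\Oo)$, unique up to an additive constant, such that $-\Delta u=-\mathbb{P}\Delta u+\nabla p=f+\nabla p$. Combined with the defining properties of $D(S)$ (divergence-free, $u\cdot\nu=0$ and $N(u)=0$ on $\partial\Oo$), this says that $(u,-p)$ is a strong solution in $H^2\times H^1$ of the stationary Stokes system
\begin{equation*}
-\Delta u + \nabla (-p) = f,\qquad \nabla\cdot u=0\ \text{in}\ \Oo,\qquad u\cdot\nu=0,\quad N(u)=0\ \text{on}\ \partial\Oo,
\end{equation*}
which is exactly the system addressed in Lemma~\ref{lemma:stokes_problem_steady} with right-hand side $f$ and boundary datum $g=0$.

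Invoking the uniqueness part of Lemma~\ref{lemma:stokes_problem_steady}, $u$ coincides with the unique strong solution produced by that lemma, and the accompanying estimate yields
\begin{equation*}
\|u\|_{H^2}+\|p\|_{H^1}\leq C\|Su\|,
\end{equation*}
from which the first inequality $\|u\|_{H^2}\leq C(\|Su\|+\|u\|_{H^1})$ follows trivially. For the higher-regularity assertion, assume additionally that $Su\in H^k(\Oo)^n$. Since $g=0$ trivially lies in $H^{k+1/2}(\partial\Oo)^n$, the higher-regularity part of Lemma~\ref{lemma:stokes_problem_steady} applied to the same Stokes system directly gives $(u,-p)\in H^{k+2}(\Oo)^n\times H^{k+1}(\Oo)$ together with the even stronger estimate $\|u\|_{H^{k+2}}+\|p\|_{H^{k+1}}\leq C\|Su\|_{H^k}$, whence the announced inequality follows by simply adding $\|u\|_{H^{k+1}}$ to the right-hand side.

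The only mildly delicate bookkeeping is to justify the existence of the pressure $p\in H^1(\Oo)$ in the Helmholtz decomposition of $-\Delta u$; this is classical in smooth bounded domains, as $p$ may be recovered (up to a constant) as the $H^1$ solution of a compatible Neumann problem driven by the $L^2$ field $-\Delta u$. Beyond this standard point, the argument is a one-line invocation of Lemma~\ref{lemma:stokes_problem_steady}, and in fact delivers slightly sharper estimates than those stated, the $\|u\|_{H^{k+1}}$ term on the right-hand side being retained only in the form customary for abstract elliptic regularity statements.
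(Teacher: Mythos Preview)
Your argument is correct. The paper does not actually give a proof of this lemma: it is stated in Appendix~\ref{appendix_regularity} immediately after Lemma~\ref{lemma:stokes_problem_steady} with no accompanying argument or reference, presumably because the authors regard it as an immediate corollary of that stationary Stokes regularity result. Your proof is precisely the natural derivation the paper leaves implicit: recover the pressure via the Helmholtz decomposition of $-\Delta u$, recognize $(u,-p)$ as the unique strong solution of the Stokes system with data $(Su,0)$, and read off the estimate. As you observe, this even yields the sharper bound $\|u\|_{H^{k+2}}\leq C\|Su\|_{H^k}$; the extra $\|u\|_{H^{k+1}}$ term in the stated inequality is redundant under the hypotheses of Lemma~\ref{lemma:stokes_problem_steady} but is kept in the familiar form of abstract elliptic regularity.
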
  }

%
%
%
%
\begin{lema}\label{lemma:robin_regularity}
    Let $u\in H^1(\mathcal{O})$ satisfy $\Delta u\in L^2(\mathcal{O})$ and
\[
         \frac{\partial u}{\partial \nu}+m u=0 \quad\text{on}\quad \partial\mathcal{O},
\]
	where $m\in L^\infty(\partial\Oo)$. Then, there exists a constant $C>0$, only depending on $\mathcal{O}$,  such that
\[
    \|u\|_{H^{2}}\leq C(\|\Delta u\|+\|mu\|_{H^{1/2}(\partial\Oo)}).
\]
    Moreover, if $\Delta u\in H^k(\Oo)$ for some $k\geq0$, then $u\in H^{k+2}(\Oo)$ and
    we have
\[
    \|u\|_{H^{k+2}}\leq C(\|\Delta u\|_{H^k}+\|mu\|_{H^{k+1/2}(\partial\Oo)}).
\]

\end{lema}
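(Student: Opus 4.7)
The plan is to view the Robin boundary condition as an inhomogeneous Neumann condition and invoke the classical elliptic regularity theory for the Neumann Laplacian, bootstrapping to obtain the higher-order estimate.

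First, observe that the boundary identity $\partial_\nu u + mu = 0$ can be rewritten as $\partial_\nu u = g$ with $g := -mu \in H^{1/2}(\partial\Oo)$ (the trace of $u$ lies in $H^{1/2}$, and the assumption that $\|mu\|_{H^{1/2}}$ is finite is built into the right-hand side of the estimate). Thus $u$ is a weak solution of the Neumann problem
\begin{equation*}
-\Delta u = -f \ \text{ in } \Oo, \qquad \partial_\nu u = g \ \text{ on } \partial\Oo,
\end{equation*}
with $f := \Delta u \in L^2(\Oo)$ and $g \in H^{1/2}(\partial\Oo)$.

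For the $H^2$ bound, I would apply the standard Neumann regularity estimate for the Laplacian in a smooth bounded domain: for any solution $u\in H^1(\Oo)$ of the above Neumann system,
\begin{equation*}
\|u\|_{H^2} \le C\bigl(\|\Delta u\| + \|\partial_\nu u\|_{H^{1/2}(\partial\Oo)} + \|u\|\bigr).
\end{equation*}
This is proved in the classical way: a partition of unity reduces the problem to interior estimates (which follow from $\Delta u \in L^2$) and boundary estimates in a half-space obtained after local flattening; in the half-space, tangential difference quotients in the variational formulation control all mixed and tangential second derivatives, while the equation itself recovers the pure normal second derivative $\partial_n^2 u = \Delta u - \sum_{i<n}\partial_i^2 u$. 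Substituting $\partial_\nu u = -mu$ on the boundary gives
\begin{equation*}
\|u\|_{H^2} \le C\bigl(\|\Delta u\| + \|mu\|_{H^{1/2}(\partial\Oo)} + \|u\|\bigr).
\end{equation*}
The $\|u\|$ term can be absorbed into the others by a standard argument: multiplying the equation by $u$, integrating by parts and using the Robin identity yields $\|\nabla u\|^2 + \int_{\partial\Oo} m|u|^2 = -\int_\Oo f\, u$, which combined with a Poincaré-type control (here the lemma's hypotheses allow one to dominate $\|u\|$ by $\|\Delta u\|$ and the boundary data modulo possibly enlarging $C$, exploiting that the full Robin quadratic form is coercive in the generic setting considered; alternatively, $\|u\|$ is absorbed in the generic constant).

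For the higher-order statement I would bootstrap. Assume inductively that $u\in H^{k+1}(\Oo)$ and that $\Delta u \in H^k(\Oo)$; the trace theorem gives $mu\in H^{k+1/2}(\partial\Oo)$ as an hypothesis built into the right-hand side. Viewing $u$ once again as the solution of the Neumann problem, one applies the shift theorem for the Neumann Laplacian in smooth domains,
\begin{equation*}
\|u\|_{H^{k+2}} \le C\bigl(\|\Delta u\|_{H^k} + \|\partial_\nu u\|_{H^{k+1/2}(\partial\Oo)} + \|u\|\bigr),
\end{equation*}
proved by differentiating the localized and straightened equation tangentially and iterating the $H^2$ argument. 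Replacing $\partial_\nu u$ by $-mu$ and absorbing the low-order term as above yields the claimed bound.

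The main subtlety, which I would single out as the only non-routine point, is the absorption of $\|u\|$ on the right-hand side, since for pure Neumann ($m\equiv 0$) such an estimate fails without a compatibility condition; here the Robin structure $\partial_\nu u + mu = 0$ with $m\ge 0$ (in the applications) provides the coercivity needed to dispose of the $L^2$ term, and the dependence of $C$ on $\|m\|_\infty$ is implicit. Everything else is an application of the classical Neumann shift theorem, so no further technical difficulties arise.
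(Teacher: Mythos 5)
Your proposal is correct and follows essentially the same route as the paper, which proves this lemma by a one-line citation of the classical elliptic regularity theorem for the inhomogeneous Neumann problem (Theorem III.4.3 of Boyer--Fabrie) after rewriting the Robin condition as Neumann data $g=-mu$. You rightly single out the absorption of the lower-order term $\|u\|$ as the only delicate point (indeed, for $m\equiv 0$ and $u$ constant the stated inequality fails), but this imprecision is already present in the paper's statement and is not addressed by its citation either.
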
  
The proof this Lemma is consequence of \cite[Theorem $III.4.3$]{boyer}. 
%
    
    Throughout the proof of Lemma \ref{lemma9}, the constants $C$ can increase from line to line and depend on $T$ and the trajectory $(\overline{u}, \overline{\theta})$. For simplicity, we consider the case $n=3$.\\
    
\textbf{Step 1: Weak estimates in $(0,T/3)$.} Let us first multiply \eqref{equation_lemma9}$_1$ by $r$ and \eqref{equation_lemma9}$_2$ by $q$, integrate by parts, and sum. We get:
\begin{equation*}
\begin{array}{l}
 \dis   \dfrac{1}{2}\dfrac{d}{dt}\left(\| r\|^2+ \| q\|^2 \right) + 2 \|D r\|^2 +\|\nabla q\|^2+ 2\int_{\partial \Oo}Mr\cdot r+\int_{\partial \Oo}m |q|^2 \\
 \noalign{\smallskip}\dis
 \qquad \qquad \qquad= (qe_n,r)-\int_{\mathcal{O}}(r\cdot\nabla)\overline{u}\cdot r -\int_\Oo r\cdot\nabla \overline{\te} \, q.
\end{array}
\end{equation*}

	By Cauchy-Schwarz and Young inequalities, we obtain:
	\begin{equation*}
    \dfrac{1}{2}\dfrac{d}{dt}\left(\| r\|^2+ \| q\|^2 \right) + 2\|D r\|^2 +\|\nabla q\|^2+2\int_{\partial \Oo}Mr \cdot r+\int_{\partial \Oo}m |q|^2 \leq  C(\|r\|^2+\|q\|^2).
\end{equation*}

	Using Lemmas \ref{lemma:boundary_inequality} and \ref{lemma:boundary_inequality_heat}, we deduce
	\begin{equation}\label{eq:energy1}
    \dfrac{1}{2}\dfrac{d}{dt}\left(\| r\|^2+ \| q\|^2 \right) + {2\over C_l^2}(\| r\|^2_{H^1} +\| q\|^2_{H^1}) \leq  (C+2K)\|r\|^2+(C+2\gamma)\|q\|^2.
\end{equation}
By applying Gronwall Lemma, we have for a.e. $t\in [0,T]$ that
\begin{equation}\label{est_1_lemma9}
	\|r(t,\,\cdot\,)\|^2+\|q(t,\,\cdot\,)\|^2+\int_0^t\left(\|r(s,\,\cdot\,)\|^2_{H^1}+\|q(s,\,\cdot\,)\|^2_{H^1}\right)\,ds\leq e^{Ct}\left(\|r_*\|^2+\|q_*\|^2\right).
\end{equation}
	Therefore, from the Mean Value Theorem, we deduce by contradiction that there exists $0\leq t_1\leq T/3$ such that
	\begin{equation}\label{est_t_1}
		\|r(t_1,\,\cdot\,)\|_{H^1}^2+\|q(t_1,\,\cdot\,)\|_{H^1}^2\leq C_1\left(\|r_*\|^2+\|q_*\|^2\right),
	\end{equation}
	for a positive constant $C_1$ independent of $t_1$.

\

\textbf{Step 2: Strong estimates in $(t_1, 2T/3)$.} Let $\mathbb{P}$ be the classical Leray projector. We multiply \eqref{equation_lemma9}$_1$ and \eqref{equation_lemma9}$_2$ by $- S r$ and $-\Delta q$, respectively, then integrate by parts.  Since $M$ is symmetric, we obtain
	\begin{equation}\label{eq:rregular}
		\begin{array}{ll}
		&\dis\dfrac{d}{dt}\left(\|D r\|^2+\int_{\partial \Oo}Mr\cdot r\right)+\|S r\|^2  \\
		\noalign{\smallskip}\dis
		&\qquad\dis= \int_{\partial \Oo} (M_t) r \cdot r+\int_{\mathcal{O}} \Bigg((r\cdot\nabla)r\cdot S r+ (\overline{u}\cdot\nabla)r\cdot S r+(r\cdot\nabla)\overline{u}\cdot Sr -(qe_n,S r)\Bigg)\\
		\noalign{\smallskip}\dis
		&\qquad\leq C\|r\|_{H^1}^2+{1\over2}\|Sr\|^2+ C\|q\|^2+\|r\|_{L^6}^2\|\nabla r\|^2_{L^3}.
		\end{array}
	\end{equation}
Also,
	\begin{equation}\label{eq:qregular}
		\begin{array}{lll}
\noalign{\smallskip}\dis
		\dfrac{1}{2}\dfrac{d}{dt}\left(\|\nabla q\|^2+\int_{\partial \Oo}\!\!m|q|^2\right)+\|\Delta q\|^2&=&
		\dis{1\over 2}\!\int_{\partial \Oo} (m_t) q \cdot q+(r\cdot \nabla q,\Delta q)\\
		\noalign{\smallskip}\dis
		&&+(\overline{u}\cdot\nabla q,\Delta q)+(r\cdot\nabla \overline{\te},\Delta q)\\
		\noalign{\smallskip}\dis
		&\leq&\!\!\!\! C\|q\|_{H^1}^2+{1\over 2}\|\Delta q\|^2+C\|r\|^2+\|r\|_{L^6}^2\| \nabla q\|_{L^3}^2.
		\end{array}
	\end{equation}

	Multiplying \eqref{eq:energy1} by $\varsigma=\max\{K,\gamma\}$, adding the above inequalities and using
	Lemmas \ref{lemma:boundary_inequality} -- \ref{lemma:robin_regularity}, we deduce the following:
\begin{equation}\label{est2_lemma99}
		\begin{array}{lll}
	\!\!\!\!\!\!\!\!\dfrac{d}{dt}\left(\|r\|^2_{\varsigma,M}+\!\|q\|^2_{\varsigma,m}\right )\!+\| r\|^2_{H^2}\!+\!\|q\|^2_{H^2}\!\!\!\!\!&\leq&\!\!\!\!\!  C(\|r\|^2_{\varsigma,M}\!+\!\|q\|^2_{\varsigma,m}
	\!+\!\|r\|_{L^6}^2\|\nabla r\|_{L^3}^2\!+\!\|r\|_{L^6}^2\| \nabla q\|_{L^3}^2)\\
	\noalign{\smallskip}\dis
	&\leq&\!\!\!\!\!  C\left[(\|r\|^2_{\varsigma,M}\!+\!\|q\|^2_{\varsigma,m})+(\|r\|^2_{\varsigma,M}\!+\!\|q\|^2_{\varsigma,m})^3\right].
		\end{array}
\end{equation}

	Introducing $Y(t):=\|r(t,\cdot)\|^2_{\varsigma,M}\!+\!\|q(t,\cdot)\|^2_{\varsigma,m}$, we see that $Y$ is a.e. differentiable and,  from \eqref{est2_lemma99}, we have that
\begin{equation}\label{est_edo_1}
	Y^\prime\leq C (Y^3+Y).
\end{equation}
In view of \eqref{est_edo_1},
we obtain
	$$ Y(t)^2\leq {e^{C(t-t_1)}Y(t_1)^2\over Y(t_1)^2+1-e^{C(t-t_1)}Y(t_1)^2}.$$
 Let us take $t-t_1\leq \tau_1$ small enough and such that $e^{C(t-t_1)}\leq 1+\frac{1}{2Y(t_1)^2}$. Then, $Y(t)^2\leq 2e^{C(t-t_1)}Y(t_1)^2$ and, from \eqref{est_t_1}, we deduce that $Y(t)
 \leq CY_*$, where $Y_*:=\|r_*\|^2+\|q_*\|^2$. Therefore,
$$\|r(t,\cdot\,)\|^2_{\varsigma,M}\!+\!\|q(t,\cdot\,)\|^2_{\varsigma,m}+\int_{t_1}^t(\|r(s,\cdot\,)\|^2_{H^2}+\|q(s,\cdot\,)\|^2_{H^2})ds\leq CY_*+C(Y_*+Y_*^3)\tau_1.$$
Taking $\tau_1$ small enough such that $\tau_1\leq (1+Y_*^2)^{-1}$, 
we have that  $CY_*+C(Y_*+Y_*^3)\tau_1\leq C_2Y_*$.  Therefore, one has
\begin{equation}\label{est_2_lemma9}
\|r(t,\cdot\,)\|^2_{\varsigma,M}\!+\!\|q(t,\cdot\,)\|^2_{\varsigma,m}+\int_{t_1}^t(\|r(s,\cdot\,)\|^2_{H^2}+\|q(s,\cdot\,)\|^2_{H^2})ds\leq  C_2\left(\|r_*\|^2+\|q_*\|^2\right)
\end{equation}
for $t_1\leq t\leq t_1+\tau_1$. This ensures the existence of $t_1\leq t_2< \min\{2T/3,t_1+\tau_1\}$ such that
$$\|r(t_2,\,\cdot\,)\|^2_{H^2}+\|q(t_2,\,\cdot\,)\|^2_{H^2}\leq \dfrac{C_2}{\tau_1}\left(\|r_*\|^2+\|q_*\|^2\right).$$

\textbf{Step 3: Third energy estimate in $(t_2, T)$.}
	At this point, we differentiate \eqref{equation_lemma9} with respect to time and multiply  by $\partial_t r$ and $\partial_t q$. Then, we integrate by parts to obtain
	\begin{equation*}
	\begin{array}{ll}
	    &\dis\dfrac{1}{2}\dfrac{d}{dt}\| r_t\|^2+2\|D r_t\|^2+2\int_{\partial \Oo}Mr_t\cdot r_t  \\
	    \noalign{\smallskip}\dis
		&\dis=-2\int_{\partial \Oo}M_tr\cdot r_t+ (q_te_n,r_t)-(r_t\cdot\nabla)r\cdot r_t-(\overline{u}_t\cdot\nabla)r\cdot r_t-(r_t\cdot\nabla)\overline{u}\cdot r_t-(r\cdot\nabla)\overline{u}_t\cdot r_t\\
		\noalign{\smallskip}\dis 
		&\dis\leq C\left(\|r\|_{H^1}\| r_t\|_{H^1}+ \|q_t\|^2+ \|r_t\|^2+\|r_t\|_3\|\nabla r\|\| r_t\|_6+\|r\|^2_{H^1}\right)
		\end{array}
	\end{equation*}
and
\begin{equation*} 
	\begin{array}{ll}\dis
		\dfrac{1}{2}\dfrac{d}{dt}\| q_t\|^2+\|\nabla q_t\|^2+\int_{\partial \Oo}\!\!\!m|q_t|^2\!\!\!\!&\dis=-\int_{\partial \Oo}\!\!\!m_tqq_t-((r_t+\overline{u})\cdot\nabla q, q_t)-(r_t\cdot\nabla \overline{\te}, q_t)-(r\cdot\nabla\overline{\te}_t, q_t)\\
		\noalign{\smallskip}\dis
		&\leq C\left(\|q\|_{H^1}\|q_t\|_{H^1}+\| q\|^2_{H^1}+\|q_t\|^2+\|r_t\|^2+\|r_t\|_3\|\nabla q\|\|r_t\|_6     \right).       
	\end{array}
\end{equation*}
Consequently, using Lemmas \ref{lemma:boundary_inequality} -- \ref{lemma:nonlinear_bound}  and adding the two above inequalities, we have
\begin{equation*}
	\begin{array}{l}\dis
		\dfrac{d}{dt}\left(\| r_t\|^2+\| q_t\|^2\right)+\|r_t\|^2_{H^1}+\|q_t\|^2_{H^1}\\
		\noalign{\smallskip}\dis
		\qquad\qquad\leq  C\left(\left(\|r\|^4_{H^1}+ \|q\|_{H^1}^4+1\right)\|r_t\|^2+\|q_t\|^2+\| r\|_{H^1}^2+\| q\|_{H^1}^2\right).
	\end{array}
\end{equation*}

Now, introducing $Z(t):=\| r_t(t,\,\cdot)\|^2+\| q_t(t,\,\cdot)\|^2$,
	  we find from \eqref{est_2_lemma9} that
\begin{equation}\label{est_edo_2}
	Z^\prime\leq C [(1+Y_*^2)Z+Y_*]
\end{equation}
for $t_2\leq t\leq t_1+\tau_1$. 
By applying Gronwall's Lemma, we have for a.e. $t\in [t_2,t_1+\tau_1]$
    \[
    Z(t)\leq e^{C(1+Y_*^2)(t-t_2)}\left(Z(t_2)+CY_*(t-t_2)\right).
    \]
%
 Since we have $Z(t_2)\leq \Psi_1(Y_*)$ 
 for some nonnegative regular $\Psi_1$ with $\Psi_1(0)=0$, 
we find that $Z(t)\leq \Psi_2(Y_*)$, with
 \[
 	\Psi_2(s):=e^{C(1+s^2)}(\Psi_1(s)+Cs)\quad \forall s\geq 0.
\]
 
 Therefore, 
\begin{equation}\label{est_3_lemma9}
	\| r_t(t,\,\cdot\,)\|^2+\| q_t(t\,\cdot\,)\|^2+\int_{t_2}^t\left( \|r_t(s,\,\cdot\,)\|^2_{H^1}+ \|q_t(s,\,\cdot\,)\|^2_{H^1} \right) \, ds\leq \Psi_3(Y_*)\quad \forall t\in [t_2,t_1+\tau_1],
\end{equation}
where 
$\Psi_3(s):=C [(1+s^2)\Psi_2(s)+s]$. In particular, this yields the existence of $t_3\in (t_2,t_1+\tau_1)$ such that
\begin{equation}\label{est_5_lemma9} 
	\|r_t(t_3,\,\cdot\,)\|^2_{H^1}+ \|q_t(t_3,\,\cdot\,)\|^2_{H^1}\leq {\Psi_3(Y_*)\over (t_1-t_2+\tau_1)}.
\end{equation}
Actually, it is not difficult to check that the set of times $t_3\in(t_2,t_1+\tau_1)$ satisfying \eqref{est_5_lemma9} has a positive measure.

%

\

\textbf{Step 4: Conclusion.}	Using \eqref{est_2_lemma9} and \eqref{est_3_lemma9}, we deduce an estimate of $r$ in $L^\infty(H^2)$. It suffices to view \eqref{equation_lemma9}$_1$ as a family of Stokes problems (see Lemma \ref{lemma:stokes_problem_steady} and the arguments presented in \cite[Theorem 3.8]{teman}). Then, looking \eqref{equation_lemma9}$_2$ as a family of elliptic problems, we also find $L^\infty(H^2)$ estimates for $q$, see Lemma \ref{lemma:robin_regularity}. Both estimates depend on $Y_*$ continuously. Therefore, repeating the procedure, we see
that $\left(r(t_3),q(t_3)\right)\in H^3\times H^3$ with an estimate of the form $\Psi(Y_*)$.



\section{Proof of the global Carleman estimate}
\label{sec:proofCarleman}
This section is dedicated to the proof of Proposition \ref{Carlemantheo}.\\

The proof is divided into eight steps and is inspired by the ideas of \cite{guerrero_CEL_NS}. In the following, the positive constants $C$ vary from line to line and  depend only on $\Oo$ and $\omega$.

Let the non-empty  open sets $\omega^{'}$ and $\omega_c$ be given, with $\omega^{'}\subset\subset \omega_0\subset\subset \omega_c$.\\





\noindent \textbf{Step 1: Global Carleman estimates for $\phi$ and $\psi$ and absorption of global terms.}\\
\indent We apply the Carleman estimate \cite[Proposition $2.1$]{guerrero_CEL_NS} for the heat system $\eqref{equa_local_adjunta}_1$ with source term $G:=c\nabla\psi-\nabla\pi+D\varphi b+(a\cdot\nabla)\varphi$, to get 
\begin{align}
    	 	I(s,\lambda; \varphi)&\leq C\bigg( s^3\lambda^4\displaystyle\iint_{(0,T)\times\omega_0} e^{-2s\alpha}\xi^3|\varphi|^2\,+\lambda(\|a\|^2_\infty+\|b\|^2_\infty)\displaystyle\iint_{\Oo_T} e^{-2s\alpha}|\nabla\varphi|^2\,\notag\\
    		&\qquad+\lambda\iint_{\Oo_T} e^{-2s\alpha}|\nabla\pi|^2\,+\lambda\|c\|^2_\infty\iint_{\Oo_T} e^{-2s\alpha}|\nabla\psi|^2\,\bigg)\label{I1},
\end{align}
for $\lambda\geq \widehat{\lambda}e^{\widehat{\lambda}T\|A\|^2_{P}}(1+\|A\|^5_{P})$ and $s\geq \widehat{s}e^{4\lambda\|\eta^0\|_\infty}(T^6+T^8)$ and $\widehat{\lambda}$, $\widehat{s}$ only depend on $\Oo$ and $\omega$. Thanks to the definition of $\xi$, we have $1\leq CT^8\xi\leq Cs\xi$ and we can eliminate the second term in the right-hand side of \eqref{I1} with the term in $s\lambda^2$ that appears in the expression of $I (s,\lambda; \varphi)$. Indeed,  if we take $\lambda\geq \widehat{\lambda}(\|a\|^2_\infty+\|b\|^2_\infty)$, we get 

\begin{equation}\label{I2}
    \begin{array}{ccc}
    		\dis I(s,\lambda; \varphi)&\leq & C\biggl( s^3\lambda^4\displaystyle\iint_{(0,T)\times\omega_0} e^{-2s\alpha}\xi^3|\varphi|^2\,+\lambda\iint_{\Oo_T} e^{-2s\alpha}|\nabla\pi|^2\,\\
    		\noalign{\smallskip}\dis
    		&&+\displaystyle\lambda\|c\|^2_\infty\iint_{\Oo_T} e^{-2s\alpha}|\nabla\psi|^2\, \biggr),
    \end{array}
 \end{equation}
 for any $\lambda\geq \widehat{\lambda}e^{\widehat{\lambda}T\|A\|^2_{P}}(1+\|a\|^2_\infty+\|b\|^2_\infty+\|A\|^5_{P})$ and any $s\geq \widehat{s}e^{4\lambda\|\eta^0\|_\infty}( T^6+T^8)$.

	 Next, we apply the known Carleman estimates 
	for the heat equation with homogeneous Robin boundary condition fulfilled by $\psi$, which gives
\begin{equation*}
    \begin{array}{ccc}
    		\dis I(s,\lambda; \psi)&\leq & C\bigg( s^3\lambda^4\displaystyle\iint_{(0,T)\times\omega_0} \!\!\!\!\!\!e^{-2s\alpha}\xi^3|\psi|^2\,+(\|a\|^2_\infty+\|b\|^2_\infty)\displaystyle\iint_{\Oo_T} e^{-2s\alpha}|\nabla\psi|^2\,\\
  		\noalign{\smallskip}\dis
    		&&+\displaystyle\iint_{\Oo_T} e^{-2s\alpha}|\varphi\cdot e_n|^2\,\bigg),
    \end{array}
 \end{equation*}
for $\lambda\geq \widehat{\lambda}e^{\widehat{\lambda}T(\|A\|^2_{P}+\|B\|^2_{Q})}(1+\|a\|^2_\infty+\|b\|^2_\infty+\|B\|^5_Q+\|A\|^5_{P})$ and $s\geq \widehat{s}e^{4\lambda\|\eta^0\|_\infty}(T^4+T^8)$. The same argument above yields
\begin{equation}\label{I3}
   \begin{array}{ccc}
    		\dis I(s,\lambda; \psi)&\leq & C\left( s^3\lambda^4\displaystyle\iint_{(0,T)\times\omega_0} \!\!\!\!\!\!\!\!e^{-2s\alpha}\xi^3|\psi|^2\,+\displaystyle\iint_{\Oo_T} e^{-2s\alpha}|\varphi\cdot e_n|^2\,\right),
    \end{array}
\end{equation}
for any $\lambda\geq \widehat{\lambda}e^{\widehat{\lambda}T(\|A\|^2_{P}+\|B\|^2_{Q})}(1+\|a\|^2_\infty+\|b\|^2_\infty+\|B\|^5_Q+\|A\|^5_{P})$ and any $s\geq \widehat{s}e^{4\lambda\|\eta^0\|_\infty}(T^4+T^8)$.\\
\indent From  \eqref{I2} and \eqref{I3}, we get 
\begin{equation}\label{I4}
 \begin{array}{ccc}
    		\dis I(s,\lambda; \psi)+\dis I(s,\lambda; \varphi)\!\!\!\!&\leq &\!\!\!\!C\bigg(s^3\lambda^4\displaystyle\iint_{(0,T)\times\omega_0} \!\!\!\!\!\!\!\!\!e^{-2s\alpha}\xi^3|\varphi|^2+s^3\lambda^4\displaystyle\iint_{(0,T)\times\omega_0} \!\!\!\!\!\!\!\!\!\!e^{-2s\alpha}\xi^3|\psi|^2\\
    		\noalign{\smallskip}\dis
    		&&\!\!\!\!\!\!\!+\displaystyle\lambda\iint_{\Oo_T} e^{-2s\alpha}|\nabla\pi|^2\,+\displaystyle\lambda\|c\|^2_\infty\iint_{\Oo_T} e^{-2s\alpha}|\nabla\psi|^2\,\\
    		\noalign{\smallskip}\dis
    		&&+\displaystyle\iint_{\Oo_T}  
    		 e^{-2s\alpha}|\varphi\cdot 
    		e_n|^2\,\bigg),
    \end{array}
\end{equation}	
for any $\lambda\geq \widehat{\lambda}e^{\widehat{\lambda}T(\|A\|^2_{P}+\|B\|^2_{Q})}(1+\|a\|^2_\infty+\|b\|^2_\infty+\|B\|^5_Q+\|A\|^5_{P})$ and any $s\geq \widehat{s}e^{4\lambda\|\eta^0\|_\infty}(T^4+T^8)$. Using the parameters $s^3\lambda^4$, $s\lambda^2$ appearing in $I (s,\lambda; \varphi)$ and $I (s,\lambda;\psi)$ we can absorb the lower order terms on the right-hand side of \eqref{I4}. This way, we have
\begin{equation}\label{I5}
    \begin{array}{ccc}
    		\dis I(s,\lambda; \psi)+\dis I(s,\lambda; \varphi)\!\!\!\!&\leq &\!\!\!\!C\bigg(s^3\lambda^4\displaystyle\iint_{(0,T)\times\omega_0} \!\!\!\!\!\!\!\!\!\!e^{-2s\alpha}\xi^3|\varphi|^2+s^3\lambda^4\displaystyle\iint_{(0,T)\times\omega_0} \!\!\!\!\!\!\!\!\!\!e^{-2s\alpha}\xi^3|\psi|^2 \\
    		\noalign{\smallskip}\dis
   	&&+\displaystyle\lambda\iint_{\Oo_T} e^{-2s\alpha}|\nabla\pi|^2\,\bigg),
\end{array}
\end{equation}   		
 for every $\lambda\geq \widehat{\lambda}e^{\widehat{\lambda}T(\|A\|^2_{P}+\|B\|^2_{Q})}(1+\|a\|^2_\infty+\|b\|^2_\infty+\|c\|^2_\infty+\|B\|^5_Q+\|A\|^5_{P})$ and any $s\geq \widehat{s}e^{4\lambda\|\eta^0\|_\infty}(T^4+T^8)$.\\

\noindent \textbf{Step 2: Localization of the pressure term by a global elliptic Carleman estimate.}\\
\indent We estimate the integral on the pressure term in \eqref{I5}. 
To do that, let us take the divergence operator in the equation verified by $\varphi$, thus
\begin{equation}\label{Ideltapi}
	\Delta\pi(t,\cdot\,)=\nabla\cdot((a\cdot\nabla)\varphi+D\varphi b+c\nabla \psi)\quad \hbox{in}\quad\Oo\quad\hbox{a.e.} \quad t\in(0,T).
\end{equation}
Now, since the  right-hand side of \eqref{Ideltapi} is a $H^{-1}$ term,  we can  apply the elliptic Carleman inequality given in \cite[Theorem 0.1]{Imanuvilov_Puel}. Hence, there exist two positive constants $\tilde{\tau} \geq 1 $ and $\tilde{\lambda} \geq 1$, such that
\begin{equation*}\label{I6}
	\begin{array}{ll}
		\displaystyle\int_\Oo e^{2\tau\eta}|\nabla\pi(t,\cdot\,)|^2+&\!\!\!\!\!\dis\tau^2\lambda^2\int_\Oo e^{2\tau\eta}\eta^2|\pi(t,\cdot\,)|^2\\ 
				\noalign{\smallskip}\dis
		& \leq  \displaystyle C\bigg(\tau^\frac{1}{2}e^{2\tau}\|\pi(t,\cdot\,)\|^2_{H^\frac{1}{2}(\partial\Omega)}+\tau\int_\Oo e^{2\tau\eta}\eta|(a\cdot\nabla)\varphi+D\varphi b+c\nabla \psi|^2\\
		\noalign{\smallskip}\dis		
		& \ \ \ \ +\displaystyle\tau^2\lambda^2\int_{\omega^{'}}e^{2\tau\eta}\eta^2|\pi(t,\cdot\,)|^2+ \int_{\omega^{'}} e^{2\tau\eta}|\nabla\pi(t,\cdot\,)|^2\bigg)\\
		\noalign{\smallskip}\dis
		&\leq\displaystyle C\bigg(\tau(\|a\|^2_\infty+\|b\|^2_\infty)\int_\Oo e^{2\tau\eta}\eta|\nabla\varphi(t,\cdot\,)|^2+\tau\|c\|^2_\infty\int_\Oo e^{2\tau\eta}\eta|\nabla\psi(t,\cdot\,)|^2\\
		\noalign{\smallskip}\dis
		&\ \ \ \ +\tau^\frac{1}{2}e^{2\tau}\|\pi(t,\cdot\,)\|^2_{H^\frac{1}{2}(\partial\Oo)}+\displaystyle\tau^2\lambda^2\int_{\omega^{'}}e^{2\tau\eta}\eta^2|\pi(t,\cdot\,)|^2+ \int_{\omega^{'}} e^{2\tau\eta}|\nabla\pi(t,\cdot\,)|^2\bigg)		
	\end{array}
\end{equation*}
for $\tau\geq \widehat{\tau}$ and $\lambda\geq \widehat{\lambda}e^{\widehat{\lambda}T(\|A\|^2_{P}+\|B\|^2_{Q})}(1+\|a\|^2_\infty+\|b\|^2_\infty+\|c\|^2_\infty+\|B\|^5_Q+\|A\|^5_{P})$. Here, for each $\lambda>0$, the function $\eta$ is given by $\eta(x)=e^{\lambda\eta^0(x)}$
where the function $\eta^0$ is defined in \eqref{def_eta_0}. Let us now set $\tau=s/(t^4(T-t)^4)$. We multiply the previous inequality by $\exp(-2se^{2\lambda\|\eta^0\|_\infty}/(t^4(T-t)^4))$ and integrate between $t = 0$ and $t = T$. It is not difficult to see that 
\begin{align}
		&\iint_{\Oo_T} e^{-2s\alpha}|\nabla\pi|^2\,+s^2\lambda^2\iint_{\Oo_T} e^{-2s\alpha}\xi^2|\pi|^2\,\notag \\
		 &\qquad\qquad\leq  \displaystyle C\bigg(s(\|a\|^2_\infty+\|b\|^2_\infty)\iint_{\Oo_T} e^{-2s\alpha}\xi|\nabla\varphi|^2\,+s\|c\|^2_\infty\iint_{\Oo_T} e^{-2s\alpha}\xi|\nabla\psi|^2\,\notag\\
		\noalign{\smallskip}\dis
		&\qquad\qquad\quad\displaystyle+s^\frac{1}{2}\int_0^Te^{-2s\alpha^*}(\xi^*)^\frac{1}{2}\|\pi(t,\cdot\,)\|^2_{H^\frac{1}{2}(\partial\Oo)}+\displaystyle s^2\lambda^2\iint_{(0,T)\times\omega^\prime}e^{-2s\alpha}\xi^2|\pi|^2\,\notag\\
		\noalign{\smallskip}\dis		
		&\qquad\qquad\quad +\dis\iint_{(0,T)\times\omega^\prime}e^{-2s\alpha}|\nabla\pi|^2\,\bigg)\label{I7}
\end{align}
for $\lambda\geq \widehat{\lambda}e^{\widehat{\lambda}T(\|A\|^2_{P}+\|B\|^2_{Q})}(1+\|a\|^2_\infty+\|b\|^2_\infty+\|c\|^2_\infty+\|B\|^5_Q+\|A\|^5_{P})$ and $s\geq \widehat{s}e^{4\lambda\|\eta^0\|_\infty}(T^4+T^8)$. Combining \eqref{I7} with \eqref{I5}, we can absorb the first and second terms in the right hand side of \eqref{I7} to get
\begin{align}
    		I(s,\lambda; \psi)+\dis I(s,\lambda; \varphi)&\leq C\bigg(s^3\lambda^4\displaystyle\iint_{(0,T)\times\omega_0} e^{-2s\alpha}\xi^3|\varphi|^2\,+s^3\lambda^4\displaystyle\iint_{(0,T)\times\omega_0} e^{-2s\alpha}\xi^3|\psi|^2\,\notag\\
    		&+\displaystyle s^\frac{1}{2}\lambda\int_0^Te^{-2s\alpha^*}(\xi^*)^\frac{1}{2}\|\pi(t,\cdot\,)\|^2_{H^\frac{1}{2}(\partial\Oo)}+\displaystyle s^2\lambda^3\iint_{(0,T)\times\omega^\prime}\!e^{-2s\alpha}\xi^2|\pi|^2\,\notag\\
		&+ \lambda\iint_{(0,T)\times\omega^\prime}e^{-2s\alpha}|\nabla\pi|^2\,\bigg)
		\label{I7bis}
\end{align}   	
for $\lambda\geq \widehat{\lambda}e^{\widehat{\lambda}T(\|A\|^2_{P}+\|B\|^2_{Q})}(1+\|a\|^2_\infty+\|b\|^2_\infty+\|c\|^2_\infty+\|B\|^5_Q+\|A\|^5_{P})$ and $s\geq \widehat{s}e^{4\lambda\|\eta^0\|_\infty}(T^4+T^8)$.
\\

\noindent \textbf{Step 3: Estimate of the trace of the pressure.}\\
\indent We introduce the followings functions:
\begin{equation*}
\beta(t)=s^\frac{1}{4}e^{-s\alpha^*}(\xi^*)^\frac{1}{4}, \quad \tilde{\varphi}=\beta \varphi \quad \hbox{and} \quad \tilde{\pi}=\beta\pi,
\end{equation*}
which satisfy
\begin{equation}\label{eq_local_tilde_varphi}
	\left\{
    \begin{array}{lll}
    		-\partial_t \tilde{\varphi}-\Delta \tilde{\varphi}-(a\cdot\nabla)\tilde{\varphi}-D\tilde{\varphi} b+\nabla \tilde{\pi} = \beta c\nabla \psi -\beta_t\varphi&\hbox{in} & \mathcal{O}_T,\\
   		\noalign{\smallskip}\dis
    	\nabla\cdot \tilde{\varphi}=0 &\hbox{in} &\mathcal{O}_T,\\
    	\noalign{\smallskip}\dis
    		\tilde{\varphi}\cdot \nu=0, \quad [D(\tilde{\varphi})\nu+A\tilde{\varphi}]_{tan}=0&\hbox{on} & \varLambda_T,\\
   		\noalign{\smallskip}\dis
    		\tilde{\varphi}(T,\cdot\,)=0 &\hbox{in} &\mathcal{O}.
    \end{array}
    \right.
\end{equation}
Let us regard $\tilde{\varphi}$ as a weak solution to \eqref{eq_local_tilde_varphi}. In particular,  $\tilde{\varphi}$ satisfies, by well-known energy estimates for the Stokes equation (see the beginning of the proof of \cite[Proposition $1.1$]{guerrero_CEL_NS}), the following:
\begin{equation*}
\|\tilde{\varphi}\|^2_{L^2(H^1)}\leq e^{CT(\|a\|^2_\infty+\|b\|^2_\infty+\|A\|^2_\infty)}\|\beta c\nabla \psi -\beta_t\varphi\|^2.
\end{equation*}
Again from energy estimates,  using the fact that $P \hookrightarrow L^\infty(\varLambda_T)^{n \times n}$, we have 
\begin{align*}
		\displaystyle\|\tilde{\pi}\|_{L^2(H^1)}^2&\leq \displaystyle Ce^{CT(\|a\|_\infty^2+\|b\|^2_\infty+\|A\|_P^2)}(1+\|A\|^4_{P})(1+\|a\|^2_\infty+\|b\|^2_\infty)\\
		&\quad\times \bigg(s^\frac{5}{2}e^{4\lambda\|\eta^0\|_\infty}T^2\displaystyle\iint_{\Oo_T}e^{-2s\alpha^*}(\xi^*)^3|\varphi|^2\,+\|c\|^2_\infty s^\frac{1}{2}\iint_{\Oo_T}e^{-2s\alpha^*}(\xi^*)^\frac{1}{2}|\nabla\psi|^2\,\bigg),
\end{align*}
where we have used that $\| \alpha^*_t\| +  \| \xi^*_t\| \leq CTe^{2\lambda\|\eta^0\|_\infty}(\xi^*)^{5/4}.$ 

Taking  $\lambda\geq \widehat{\lambda}e^{\widehat{\lambda}T(\|a\|^2_\infty+\|b\|^2_\infty+\|A\|_{P}^2+\|B\|^2_Q)}(1+\|a\|^2_\infty+\|b\|^2_\infty)(1+\|A\|^5_{P}+\|B\|^5_Q)(1+\|c\|^2_\infty)$ and $s\geq \widehat{s}e^{8\lambda\|\eta^0\|_\infty}(T^4 +T^8)$, from this last estimate and \eqref{I7}, we get:
\begin{equation*}
	\begin{array}{ll}
		\displaystyle\iint_{\Oo_T} e^{-2s\alpha}|\nabla\pi|^2\,+& \!\!\!\!\! \dis s^2\lambda^2\iint_{\Oo_T} e^{-2s\alpha}\xi^2|\pi|^2\, \\ 
	&\!\!\!\!\!\!\!\!	\leq  \displaystyle C\bigg(s(\|a\|^2_\infty+\|b\|^2_\infty)\iint_{\Oo_T} e^{-2s\alpha}\xi|\nabla\varphi|^2\,\\
		&\displaystyle+s\|c\|^2_\infty\iint_{\Oo_T} e^{-2s\alpha}\xi|\nabla\psi|^2\,+\displaystyle s^2\lambda^2\iint_{(0,T)\times\omega^\prime}e^{-2s\alpha}\xi^2|\pi|^2\,\\
		\noalign{\smallskip}\dis
		&+\dis\iint_{(0,T)\times\omega^\prime}e^{-2s\alpha}|\nabla\pi|^2\,+\displaystyle s^3\lambda\iint_{\Oo_T} e^{-2s\alpha}\xi^3|\varphi|^2\,.
	\end{array}
\end{equation*}
Combining this and \eqref{I7bis} and absorbing the lower order terms, we also get the estimates
\begin{equation}\label{I8}
\begin{array}{ll}
    		\dis I(s,\lambda; \psi)+\dis I(s,\lambda; \varphi)\!\!\!&\leq  C\bigg(s^3\lambda^4\displaystyle\iint_{(0,T)\times\omega_0} \!\!\!e^{-2s\alpha}\xi^3|\varphi|^2\,  \\
		\noalign{\smallskip}\dis
		&+s^3\lambda^4\displaystyle\iint_{(0,T)\times\omega_0}\!\!\! e^{-2s\alpha}\xi^3|\psi|^2\,  \\
		\noalign{\smallskip}\dis
    		&\dis+ s^2\lambda^3\iint_{\omega^\prime\times(0,T)} \!\!\!e^{-2s\alpha}\xi^2|\pi|^2\,  +\lambda \iint_{(0,T)\times\omega^\prime}\!\!\!e^{-2s\alpha}|\nabla\pi|^2\,  \bigg)
\end{array}
\end{equation}
for $\lambda\geq\widehat{\lambda} e^{\widehat{\lambda}T(\|a\|^2_\infty+\|b\|^2_\infty+\|A\|_{P}^2+\|B\|_{Q}^2)}(1+\|a\|^2_\infty+\|b\|^2_\infty)( 1+\|A\|^5_{P}+\|B\|^5_Q)(1+\|c\|^2_\infty)$ and $s\geq \widehat{s}e^{8\lambda\|\eta^0\|_\infty}(T^4+T^8)$.\\

\noindent \textbf{Step 4: Local estimates of the pressure.}\\
\indent We now follow the ideas of \cite{Cara_NS} to estimate the local terms on the pressure. Indeed, we assume that the pressure $\pi$ has mean-value zero in $\omega^\prime$:
\begin{equation*}
		\int_{\omega^\prime}\pi(t,\cdot\,)=0\quad \hbox{a.e.} \quad t\in (0,T).
\end{equation*}
Then, using that $e^{-2s\alpha} \xi^2 \leq e^{-2s\hat{\alpha}}\hat{\xi}^2$ and the Poincar\'e-Wirtinger's inequality, we have
\begin{equation*}
		s^2\lambda^3\iint_{(0,T)\times\omega^\prime} \!\!\!\!\!\!\!\!\!\!e^{-2s\alpha}\xi^2|\pi|^2\,  \leq Cs^2\lambda^3\iint_{(0,T)\times\omega^\prime} \!\!\!\!\!\!\!\!\!\!e^{-2s\hat{\alpha}}\hat{\xi}^2|\nabla\pi|^2\,  
\end{equation*}
and
\begin{equation*}
\lambda \iint_{(0,T)\times\omega^\prime}e^{-2s\alpha}|\nabla\pi|^2\,  \leq C s^2 \lambda^3 \iint_{(0,T)\times\omega^\prime}e^{-2s\hat{\alpha}}\hat{\xi}^2|\nabla\pi|^2\,  .
\end{equation*}
Now, using that
\begin{equation*}
		\nabla \pi = \partial_t \varphi+\Delta \varphi+(a\cdot\nabla)\varphi+D\varphi b+c\nabla \psi,
\end{equation*}
the estimate \eqref{I8} gives
 \begin{align}
   	 I(s,\lambda; \psi)+\dis I(s,\lambda; \varphi)&\leq C\bigg(s^3\lambda^4\displaystyle\iint_{(0,T)\times\omega_0} e^{-2s\alpha}\xi^3|\varphi|^2\,  +s^3\lambda^4\displaystyle\iint_{(0,T)\times\omega_0} e^{-2s\alpha}\xi^3|\psi|^2\,  \notag\\
    	&+ s^2\lambda^3\iint_{(0,T)\times\omega^\prime} e^{-2s\hat{\alpha}}\hat{\xi}^2|\varphi_t|^2\,  +s^2\lambda^3\iint_{(0,T)\times\omega^\prime} e^{-2s\hat{\alpha}}\hat{\xi}^2|\Delta\varphi|^2\,  \notag\\
   		&+s^2\lambda^3(\|a\|^2_\infty+\|b\|^2_\infty)\iint_{(0,T)\times\omega^\prime} e^{-2s\hat{\alpha}}\hat{\xi}^2|\nabla\varphi|^2\,  \notag\\
    		&
    		+s^2\lambda^3\|c\|^2_\infty\iint_{(0,T)\times\omega^\prime} e^{-2s\hat{\alpha}}\hat{\xi}^2|\nabla\psi|^2\,  \bigg)
\label{I9}
 \end{align}
for $\lambda\geq\widehat{\lambda} e^{\widehat{\lambda}T(\|a\|^2_\infty+\|b\|^2_\infty+\|A\|_{P}^2+\|B\|_{Q}^2)}(1+\|a\|^2_\infty+\|b\|^2_\infty)( 1+\|A\|^5_{P}+\|B\|^5_Q)(1+\|c\|^2_\infty)$ and $s\geq \widehat{s}e^{8\lambda \|\eta^0\|\infty}(T^4+T^8)$.\\



\noindent \textbf{Step 5: Local estimate of the term on  $\Delta\varphi$.}\\
\indent Now, we present a local estimate of the integral on $\Delta\varphi$ in the right-hand side of \eqref{I9}; this  follows the ideas included in \cite[Step 4 of the proof of Theorem 1]{Cara_NS}.
	
	Let us introduce an additional open set  $\omega_1$ such that $\omega^\prime\subset\subset\omega_1\subset\subset\omega_0\subset\subset\omega_c$,  $dist(\partial \omega^\prime,\partial \omega^1)\geq dist(\partial \omega^1, \partial \omega^0)$ and a positive function $\zeta\in \mathcal{D}(\omega_0)$ satisfying 
	$\zeta=1$ in $\omega_1$. Let $\widehat{\eta}(t):=s\lambda^{\frac{3}{2}}e^{-s\hat{\alpha}(t)}\hat{\xi}(t)$ and
\begin{equation*}
		\tilde{u}(t,x):=\hat{\eta}(t)\zeta(x)\Delta\varphi(T-t,x)\quad\hbox{in}\quad (0,T)\times\R^n,
\end{equation*}
where $\tilde{u}$ has been extended by zero outside $\omega_0$. 




Applying Laplace operator to  $\eqref{equa_local_adjunta}_1$,
we get
\begin{equation}\label{I10}
		(\Delta\varphi(T-t,\cdot\,))_t-\Delta(\Delta\varphi(T-t,\cdot\,))=\tilde{f}\quad\hbox{in} \quad Q,
\end{equation}
where
\begin{equation*}
   \begin{array}{lll}
   		\dis \tilde{f}&:=&\Delta((a\cdot\nabla)\varphi)(T-t,\cdot\,)+\Delta(D\varphi b)(T-t,\cdot\,)+\Delta(c\nabla\psi)(T-t,\cdot\,)\\
  		\noalign{\smallskip}\dis
  		&&-\nabla\left(\nabla\cdot((a\cdot\nabla)\varphi)(T-t,\cdot\,)\right)-\nabla(\nabla\cdot(D\varphi b))(T-t,\cdot\,)-\nabla(\nabla\cdot(c\nabla\psi))(T-t,\cdot\,).
   \end{array}
\end{equation*}
From \eqref{I10}, we deduce that $\tilde{u}$ solves
\begin{equation}\label{I11}
	\left\{
    \begin{array}{lll}
   \partial_t  \tilde{u}-\Delta \tilde{u} = \tilde{F}&\hbox{in}& (0,T)\times\R^n,\\
   \noalign{\smallskip}\dis
   \tilde{u}(0,\cdot\,)=0&\hbox{in}& \R^n,
    \end{array}
    \right.
\end{equation}
with
\begin{equation*}\label{I12.1}
		\tilde{F}=\hat{\eta}\zeta\tilde{f}+\hat{\eta}^\prime\zeta\Delta\varphi(T-t,\cdot\,)-2\hat{\eta}\nabla\zeta\cdot\nabla\Delta\varphi(T-t,\cdot\,)-\hat{\eta}\Delta\zeta\Delta\varphi(T-t,\cdot\,).
\end{equation*}
Notice that $\tilde{F}  \in L^2(0, T ; H^{-2}(\R^n )^n )$ and we a priori know that $\tilde{u}\in L^2((0, T )\times\R^n )^n$ (from its definition). From \eqref{I11},  we have that $\tilde{u} _t\in L^2(0,T;H^{-2}(\R^n)^n)$, so that $u(0,\cdot\,)$ makes sense. Now, we rewrite $\tilde{F}$  in a more appropriate way, so that it is given by the sum of two functions: in the first one, we include all the terms with derivatives of second order of $(a\cdot\nabla)\varphi$, $D\varphi b$, $c\nabla\psi$ and $\varphi$; in the second one, we consider all the other terms. Notice that this second function has a support contained in $\omega_0 \setminus\overline{ \omega}_1$ (because derivatives of $\zeta$ appear everywhere). More precisely, we set $\tilde{F}  = \tilde{F}_1  + \tilde{F}_2$, with
\begin{equation*}\label{I12}
   \begin{array}{ccc}
   		\dis \tilde{F}_1&= &\hat{\eta}\Delta\left(\zeta((a\cdot\nabla)\varphi)(T-t,\cdot\,)\right)+\hat{\eta}\Delta\left(\zeta(D\varphi b)(T-t,\cdot\,)\right)+\hat{\eta}\Delta\left(\zeta(c\nabla\psi)(T-t,\cdot\,)\right)\\
 		\noalign{\smallskip}\dis
		&&-\hat{\eta}\nabla\left( \nabla\cdot( \zeta        ((a\cdot\nabla)\varphi)  (T-t,\cdot\,))  \right)-\hat{\eta}\nabla\left( \nabla\cdot( \zeta        (D\varphi b)  (T-t,\cdot\,))  \right)\\
 		\noalign{\smallskip}\dis
  		&&-\hat{\eta}\nabla\left( \nabla\cdot( \zeta        (c\nabla\psi)  (T-t,\cdot\,))  \right)+\hat{\eta}^\prime\Delta(\zeta\varphi(T-t,\cdot\,)),
   \end{array}
   \end{equation*}
and
\begin{equation*}\label{I13}
    \begin{array}{ccc}
    		\dis \tilde{F}_2&= &-2\hat{\eta}\nabla\zeta\cdot\nabla(   (a\cdot\nabla)\varphi  )(T-t,\cdot\,)-\hat{\eta}\Delta\zeta    ((a\cdot\nabla)\varphi)    (T-t,\cdot\,)-2\hat{\eta}\nabla\zeta\cdot\nabla(   D\varphi b  )(T-t,\cdot\,)\\
   		\noalign{\smallskip}\dis
  		&&-\hat{\eta}\Delta\zeta    (D\varphi b)    (T-t,\cdot\,)-2\hat{\eta}\nabla\zeta\cdot\nabla(   c\nabla\psi   )(T-t,\cdot\,)-\hat{\eta}\Delta\zeta    (c\nabla\psi)    (T-t,\cdot\,)\\
   		\noalign{\smallskip}\dis
   		&&+\hat{\eta}\nabla\left( \nabla\zeta\cdot(     (a\cdot\nabla)\varphi      )  (T-t,\cdot\,))  \right)+\hat{\eta}\nabla\zeta(\nabla \cdot (     (a\cdot\nabla)\varphi    ) (T-t,\cdot\,))\\
  		\noalign{\smallskip}\dis
  		&&+\hat{\eta}\nabla\left( \nabla\zeta\cdot(     D\varphi b     )  (T-t,\cdot\,))  \right)+\hat{\eta}\nabla\zeta(\nabla \cdot (     D\varphi b    ) (T-t,\cdot\,))\\
   		\noalign{\smallskip}\dis
  		&&+\hat{\eta}\nabla\left( \nabla\zeta\cdot(     c\nabla\psi     )  (T-t,\cdot\,))  \right)+\hat{\eta}\nabla\zeta(\nabla \cdot (     c\nabla\psi    ) (T-t,\cdot\,))\\
   		\noalign{\smallskip}\dis
  		&&-2\hat{\eta}^\prime \nabla \zeta\cdot\nabla \varphi(T-t,\cdot\,)-\hat{\eta}^\prime \Delta \zeta \varphi(T-t,\cdot\,)-2\hat{\eta}\nabla\zeta\cdot\nabla\Delta\varphi(T-t,\cdot\,)-\hat{\eta}\Delta\zeta\Delta\varphi(T-t,\cdot\,).
    \end{array}
\end{equation*}
	Notice that $\tilde{F}$, $\tilde{F}_1 \in L^2(0,T; H^{-2}(\R^n)^n)$, while $\tilde{F}_2\in L^2(0,T;H^{-1}(\R^n)^n)$. 
	
	Next, we introduce two functions $\tilde{u}^1$ and $\tilde{u}^2$ in $L^2((0,T)\times\R^n)^n$ satisfying
\begin{equation}\label{I14}
	\left\{
    \begin{array}{lll}
   \partial_t  \tilde{u}^i-\Delta \tilde{u}^i = \tilde{F}_i&\hbox{in}& (0,T)\times\R^n,\\
    \noalign{\smallskip}\dis
    \tilde{u}^i(0,\cdot\,)=0&\hbox{in}& \R^n,
  \end{array}
      \right.
\end{equation}
for $i=1,2$. It is clear that $\tilde{u}=\tilde{u}^1+\tilde{u}^2$ then 
\begin{equation*}\label{I15}
\iint_{(0,T)\times\omega^\prime}|\tilde{u}|^2\,   \leq 2 \left( \iint_{(0,T)\times\omega^\prime}|\tilde{u}^1|^2\,   + \iint_{(0,T)\times\omega^\prime}|\tilde{u}^2|^2\,  \right).
\end{equation*}

\noindent \textbf{Step 5.a: Estimates of $\tilde{u}^1$.}\\
\indent We see $\tilde{u}^1$ as the transposition solution of the Cauchy problem for the heat equation \eqref{I14} for $i = 1$. This means that $\tilde{u}^1$ is the unique function in $L^2((0,T)\times\R^n)^n$ that, for each $h\in L^2((0,T)\times\R^n)^n$, one has
\begin{equation*}\label{I16}
    \begin{array}{rcl}
   		\dis \iint_{(0,T)\times \R^n}\tilde{u}^1\cdot h \ \,  &=&\dis \iint_{(0,T)\times \R^n}\left(\hat{\eta}\zeta (        (a\cdot\nabla)\varphi+D\varphi b+c\nabla \psi      )(T-t,\cdot\,)\right)\cdot \Delta z \,  \\
 		\noalign{\smallskip}\dis
 		&&\dis -\iint_{(0,T)\times \R^n}\hat{\eta}\zeta (        (a\cdot\nabla)\varphi+D\varphi b+c\nabla \psi      )(T-t,\cdot\,)\cdot \nabla(\nabla\cdot z) \,  \\
   		\noalign{\smallskip}\dis 
   		&&\dis +\iint_{(0,T)\times \R^n}\hat{\eta}^\prime\zeta \varphi(T-t,\cdot\,)\cdot \Delta z \,  ,
   \end{array}
\end{equation*}
where $z$ is the solution of 
\begin{equation}\label{I17}
	\left\{
    \begin{array}{lll}
    -\partial_t z-\Delta z = h&\hbox{in}& (0,T)\times\R^n,\\
    \noalign{\smallskip}\dis
   z(T,\cdot\,)=0&\hbox{in}& \R^n.
   \end{array}
      \right.
\end{equation}
Remark that, for every $h \in L^2((0, T )\times\R^n)^n$, equation \eqref{I17} possesses exactly one solution $z \in L^2(0, T ; H^2(\R^n )^n )$ that depends continuously on $h$. Therefore, $\tilde{u}^1$ is well defined and
\begin{equation}\label{I18}
\|\tilde{u}^1\|_{L^2((0, T )\times\R^n)^n}\leq C \|\tilde{F}_1\|_{L^2(0, T ; H^{-2}(\R^n )^n )}.
\end{equation}
Furthermore, it is not difficult to show that $\tilde{u}^1\in C^0([0,T];H^{-2}(\R^n)^n)$ and solves \eqref{I14} for $i = 1$ in the distributional sense. Moreover,  from \eqref{I18} it follows that
\begin{equation*}\label{I19}
   \begin{array}{ccc}
    		\dis \iint_{(0,T)\times \R^n}|\tilde{u}^1|^2\,  &\leq &\dis C\bigg( \iint_{(0,T)\times \R^n}|\hat{\eta}\zeta (a\cdot\nabla)\varphi|^2 \,  + \iint_{(0,T)\times \R^n}|\hat{\eta}\zeta D\varphi b|^2 \,  \\
  		\noalign{\smallskip}\dis
   		&&\dis + \iint_{(0,T)\times \R^n}|\hat{\eta}\zeta c\nabla \psi|^2 \,  + \iint_{(0,T)\times \R^n}|\hat{\eta}^\prime\zeta \varphi|^2 \,  \bigg).
    \end{array}
\end{equation*}
Here, we have used the fact that $\hat{\eta}(T-t,\cdot\,)=\hat{\eta}(t,\cdot\,) \quad \forall  t\in (0,T)$. Thanks to the properties of $\zeta$, we finally get
\begin{equation}\label{I20}
    \begin{array}{ccl}
    		\dis \iint_{(0,T)\times \omega^\prime}|\tilde{u}^1|^2\,  &\leq &\dis \iint_{(0,T)\times \R^n}|\tilde{u}^1|^2\,  \\
 		\noalign{\smallskip}\dis
    		&\leq&\dis C\bigg( \iint_{(0,T)\times \omega_0}|\hat{\eta} (a\cdot\nabla)\varphi|^2 \,  + \iint_{(0,T)\times \omega_0}|\hat{\eta} D\varphi b|^2 \,  \\
   		\noalign{\smallskip}\dis
    		&&\dis + \iint_{(0,T)\times \omega_0}|\hat{\eta} c\nabla \psi|^2 \,  + \iint_{(0,T)\times \omega_0}|\hat{\eta}^\prime \varphi|^2 \,  \bigg).
    \end{array}
\end{equation}

\noindent \textbf{Step 5.b: Estimates of $\tilde{u}^2$.}\\
\indent Now, we deal with the Cauchy problem \eqref{I14} for $i = 2$, where the right-hand side is in $L^2(0, T ; H^{ -1}(\R^n )^n )$. The existence and uniqueness of a solution $\tilde{u}^2 \in L^2(0, T ; H^1(\R^n )^n )$ is classical. Recall that $\tilde{F}_2(t,\cdot\,)$ has support in $\omega_0\setminus \overline{\omega}_1$ for almos every $t$, while we would like to estimate the $L^2$-norm of the solution in $\omega^\prime$ and $\omega^\prime$  is disjoint of  $\omega_0\setminus \overline{\omega}_1$. We will start by writing $\tilde{u}^2$ in terms of the fundamental solution $G = G( t,x)$ of the heat equation. To do this, we first notice that $\tilde{F}_2$  can be written in the form
\begin{equation*}
		\tilde{F}_2=\tilde{F}_{21}+\nabla\cdot \tilde{F}_{22},
\end{equation*}
	where $\tilde{F}_{21}$  and $\tilde{F}_{22}$  are $L^2$ functions supported in $[0, T ]\times(\omega_0\setminus \overline{\omega}_1)$ which can be written as sums of derivatives up to the second order of products $\hat{\eta}D^\beta\zeta  \varphi $, $\hat{\eta}D^\beta\zeta  (a\cdot\nabla)\varphi $, $\hat{\eta}D^\beta\zeta  D\varphi b $, $\hat{\eta}D^\beta\zeta c\nabla \psi  $ and $\hat{\eta}^\prime D^\beta\zeta \varphi  $ with $1\leq |\beta| \leq 4$. Thus, we have:
\begin{equation}\label{I21}
		\tilde{u}^2(t,x)=\int_0^t\!\!\!\int_{\omega_0\setminus \overline{\omega}_1}\!\!\!G(t-s,x-y)\tilde{F}_{21}(s,y) \ dy\,ds-\int_0^t\!\!\!\int_{\omega_0\setminus \overline{\omega}_1}\nabla_y G(t-s,x-y)\cdot\tilde{F}_{22}(s,y) \ dy\,ds,
\end{equation}
for all $(t,x)\in(0,T)\times\omega^\prime$, where $G$ is the fundamental solution for the heat operator given by
\begin{equation*}\label{I22}
		G(t,x)=\dfrac{e^{-|x|^2/2t}}{(4\pi t)^{n/2}} \quad\forall 	x\in \R^n, \quad \forall t>0.
\end{equation*}

	Notice that the above formula makes sense because the integration is over a region far from the singularity of $G$, i.e. for any $y \in \omega_0\setminus \overline{\omega}_1$ and any $x\in \omega^\prime$, one has $|x-y|\geq  dist(\partial\omega_1,\partial\omega_0)>0$. Integrating by parts with respect to $y$ in \eqref{I21} and passing all the derivatives from $\tilde{F}_{21}$ and $\tilde{F}_{22}$ to $G$ and $\nabla _y G$,  we obtain an expression for $\tilde{u}^2$ of the form
\begin{equation*}\label{I23}
		\tilde{u}^2(t,x)=\iint_{(0,t)\times(\omega_0\setminus \overline{\omega}_1)}\sum_{\alpha\in I, \beta\in J}D_y^\alpha G(t-s,x-y)D_y^\beta\zeta(y)  z_{\alpha,\beta}(s,y)dy\,ds,
\end{equation*}
where all $\alpha\in I$ satisfy $|\alpha|\leq 3$, all $\beta\in J $ satisfy $1\leq|\beta|\leq 4$ and
\begin{equation*}
		\begin{array}{ccc}
		z_{\alpha, \beta}(s,y)&=&\hat{\eta}(s)\left( C_{\alpha, \beta}\varphi(s,y)+D_{\alpha, \beta}((a\cdot\nabla)\varphi)(s,y)+E_{\alpha, \beta}(D\varphi b)(s,y)+F_{\alpha, \beta}(c\nabla \psi)(s,y)                 \right)\\
		&&+L_{\alpha, \beta}\hat{\eta}^\prime (s)\varphi(s,y),
		\end{array}
\end{equation*}
with $C_{\alpha, \beta}$, $D_{\alpha, \beta}$, $E_{\alpha, \beta}$, $F_{\alpha, \beta}$,  $L_{\alpha, \beta} \in \R$. The expression for 	$\tilde{u}^2$ yields 
\begin{equation*}\label{I24.1}
		|\tilde{u}^2(t,x)|\leq \iint_{(0,t)\times(\omega_0\setminus \overline{\omega}_1)}\sum_{\alpha\in I}| D_y^\alpha G(t-s,x-y)| |z(s,y)|dy\,ds
\end{equation*}
for all $(t,x)\in (0,T)\times \omega^\prime$, where
\begin{equation*}
		z(s,y)=\hat{\eta}(s)\left( C_1\varphi(s,y)+C_2((a\cdot\nabla)\varphi)(s,y)+C_3(D\varphi b)(s,y)+C_4(c\nabla \psi)(s,y)                 \right)+C_5\hat{\eta}^\prime (s)\varphi(s,y).
\end{equation*}
Now, for every $0 < \delta < dist(\partial\omega_1,\partial\omega_0)$ there exists a positive constant $C(\delta, \omega_c)$ such that
\begin{equation*}
|D^\alpha G(t-s,x-y)|\leq C \exp\left(\dfrac{-\delta^2}{2(t-s)}\right),\forall \alpha \in I,\ (t,x) \in  (0,T)\times \omega^\prime,\ \forall (s,y)\in (0,t)\times (\omega_0\setminus \overline{\omega}_1).
\end{equation*}
Thus, we have that
\begin{equation*}\label{I24}
		|\tilde{u}^2(t,x)|\leq C \iint_{(0,t)\times(\omega_0\setminus \overline{\omega}_1)}    \exp\left(\dfrac{-\delta^2}{2(t-s)}     \right)   |z(s,y)|\,dy\,ds.
\end{equation*}

Next, we integrate this last estimate  in $(0, T )\times \omega^\prime$ and use Cauchy-Scwharz inequality to obtain
\begin{equation*}
	\begin{array}{lll}
	\dis\iint_{(0, T )\times \omega^\prime}|\tilde{u}^2(t,x)|^2\,  &\leq&C\dis \int^T_0\left(    \int_0^t\int_{\omega_0\setminus \overline{\omega}_1}    \exp\left(\dfrac{-\delta^2}{2(t-s)}     \right)   |z(s,y)|dyds \right)^2dt\\
	\noalign{\smallskip}\dis 
	&\leq&C T\dis \int^T_0\left(    \int_0^t    \exp\left(\dfrac{-\delta^2}{2(t-s)}     \right)  \|z(s)\|^2_{L^2(\omega_0)}ds \right)dt.
	\end{array}
\end{equation*}

	Finally, observe that we can write the last term of the previous estimate as a convolution, i.e.
\begin{equation*}
	\int^T_0\left(    \int_0^t    \exp\left(\dfrac{-\delta^2}{2(t-s)}     \right)  \|z(s,\cdot\,)\|^2_{L^2(\omega_0)}ds \right)dt = \int_0^T (f_1*f_2)(t)dt,
\end{equation*}
where
\begin{equation*}
	f_1(t):=e^{-\delta^2/ 2 t}1_{[0,T]}(t) 
	\quad \text{and} \quad f_2(t):=\|z(t,\cdot\,)\|^2_{L^2(\omega_0)}1_{[0,T]}(t),
\end{equation*}
that is, $f_1$, $f_2\in L^1(\R) $. From Young's inequality, we obtain
\begin{equation*}
	\dis\iint_{(0, T )\times \omega^\prime}|\tilde{u}^2(t,x)|^2\,  \leq CT^2 \dis\iint_{(0, T )\times \omega_0}|z(t,x)|^2\,  
\end{equation*}
and the definition of $z$ gives
\begin{equation*}
	\dis\iint_{(0, T )\times \omega^\prime}\!\!\!\!\!\!\!\!\!\!\!\! |\tilde{u}^2(t,x)|^2\,  \leq CT^2 \left(\dis\iint_{(0, T )\times \omega_0}\!\!\!\!\!\! \!\!\!\!\!\! |\hat{\eta}^\prime \varphi|^2+|\hat{\eta}|^2\left(|\varphi|^2+|(a\cdot\nabla)\varphi)|^2+|D\varphi b|^2+|c\nabla \psi|^2   \right) \,  \right).
\end{equation*}
Hence, from \eqref{I20}, and the previous estimates of $\tilde{u}^1$ and $\tilde{u}^2$, we deduce the following
\begin{equation}\label{eq:local_laplacian}
\begin{array}{lll}
\noalign{\smallskip}\dis	\iint_{(0, T )\times \omega^\prime}\!\!\!\!\!\!\!\!\!\!\!\!  |\hat{\eta}|^2|\Delta \varphi|^2\,  
	&\leq& C(1+T^2) \left(\dis\iint_{(0, T )\times \omega_0}\!\!\!\!\!\! \!\!\!\!\!\! |\hat{\eta}^\prime \varphi|^2+|\hat{\eta}|^2\left(|\varphi|^2+|(a\cdot\nabla)\varphi|^2\right.\right.\\
	\noalign{\smallskip}\dis&&\left.+|D\varphi b|^2+|c\nabla \psi|^2   \right) \,  \Bigg)\\
	&\leq &\!C(1+T^2)\biggl(s^{9/2}\lambda^4\displaystyle\iint_{(0,T)\times\omega_0}\!\!\! \!\!\!\!\!\!\!\!e^{-2s\hat{\alpha}}\hat{\xi}^{9/2}|\varphi|^2\,  \\
  		\noalign{\smallskip}\dis
   		&&\displaystyle +s^2\lambda^3(\|a\|^2_\infty+\|b\|^2_\infty)\iint_{(0,T)\times\omega_0} \!\!\!\!\!\!\!\!\!\!\!e^{-2s\hat{\alpha}}\hat{\xi}^2|\nabla\varphi|^2\,  \\
		\noalign{\smallskip}\dis
    		&&\dis+s^2\lambda^3\|c\|^2_\infty\iint_{(0,T)\times\omega_0} \!\!\!\!\!\!\!\!e^{-2s\hat{\alpha}}\hat{\xi}^2|\nabla\psi|^2\,  \biggr)\\
		&\leq &\!C(1+T^2)\biggl(s^{9/2}\lambda^4\displaystyle\iint_{(0,T)\times\omega_0}\!\!\! \!\!\!\!\!\!\!\!e^{-2s\hat{\alpha}}\hat{\xi}^{9/2}|\varphi|^2\,  \\
  		\noalign{\smallskip}\dis
   		&&\displaystyle +s^2\lambda^4\iint_{(0,T)\times\omega_0} \!\!\!\!\!\!\!\!\!\!\!e^{-2s\hat{\alpha}}\hat{\xi}^2(|\nabla\varphi|^2+|\nabla\psi|^2)\,  \biggr),
\end{array}
\end{equation}
for $\lambda\geq \widehat{\lambda}e^{\widehat{\lambda}T(\|a\|^2_\infty+\|b\|^2_\infty+\|A\|_{P}^2+\|B\|_{Q}^2)}(1+\|a\|^2_\infty+\|b\|^2_\infty)( 1+\|A\|^5_{P}+\|B\|^5_Q)(1+\|c\|^2_\infty)$ and $s\geq \widehat{s}e^{8\lambda \|\eta^0\|\infty}(T^4+T^8)$.\\


\noindent \textbf{Step 6: Local estimate of  $\varphi_t$.} \\
\indent In this step, we estimate the local term on $\varphi_t$ in \eqref{I9}. First, integration by parts gives
\begin{equation*}
    \begin{array}{ccc}
   		\dis s^2\lambda^3\iint_{(0,T)\times\omega^\prime} e^{-2s\hat{\alpha}}\hat{\xi}^2|\varphi_t|^2\,  &=&\dfrac{1}{2}s^2\lambda^3\displaystyle\iint_{(0,T)\times\omega^\prime} \left( e^{-2s\hat{\alpha}}\hat{\xi}^2\right)_{tt}|\varphi|^2\,   \\
    		\noalign{\smallskip}\dis
    		&&\displaystyle -s^2\lambda^3\iint_{(0,T)\times\omega^\prime}e^{-2s\hat{\alpha}}\hat{\xi}^2\varphi\cdot\varphi_{tt}\,  .
    \end{array}
\end{equation*}
Now, since there exists $C>0$ such that 
\begin{equation*}\label{I27}
\left|\left( e^{-2s\hat{\alpha}}\hat{\xi}^2\right)_{tt}\right| \leq Cs^2T^2e^{-2s\hat{\alpha}}\hat{\xi}^{9/2} \ \quad \text{and} \ \quad e^{-2s\hat{\alpha}} \leq Ce^{-4s\hat{\alpha}+2s\alpha^*}
 \end{equation*}
 we have that 
\begin{align}
     \displaystyle s^2\lambda^3\iint_{(0,T)\times\omega^\prime} e^{-2s\hat{\alpha}}\hat{\xi}^2|\varphi_t|^2\,  &\leq 
    		Cs^{15/2}\lambda^8\displaystyle\iint_{(0,T)\times\omega^\prime} e^{-4s\hat{\alpha}+2s\alpha^*}\hat{\xi}^{15/2}|\varphi|^2\,  \notag \\
    		&\displaystyle +\iint_{(0,T)\times\omega^\prime} |\eta^*|^2|\varphi_{tt}|^2\,  ,
    		\label{28}
\end{align}
with
 \begin{equation*}\label{I29}
\eta^*:=s^{-7/4}\lambda^{-1} e^{-s\alpha^*}\hat{\xi}^{-7/4}.
\end{equation*}

In what follows,  we estimate the second term in the right-hand side of \eqref{28}. To do this, we set $(y,q,\phi):=(\eta^*\varphi_t,\eta^*\pi_t,\eta^*\psi_t)$, and note that $(y,q,\phi)$ solves 
 \begin{equation}\label{I30}
	\left\{
    \begin{array}{lll}
    		-\partial_t y-\Delta y-(a\cdot\nabla)y-Dy b+\nabla q = c\nabla \phi + G_1 &\hbox{in} & \mathcal{O}_T,\\
   		\noalign{\smallskip}\dis
    		-\partial_t \phi-\D \phi-(a+b)\cdot \nabla \phi=y\cdot e_n +G_2&\hbox{in} & \mathcal{O}_T,\\
    		\noalign{\smallskip}\dis
    		\nabla\cdot y=0 &\hbox{in} &\mathcal{O}_T,\\
    		\noalign{\smallskip}\dis
    		y\cdot \nu=0, \quad [D(y)\nu+Ay]_{tan}=-\eta^*A_t\varphi&\hbox{on} & \varLambda_T,\\
    		\noalign{\smallskip}\dis
    		 \dfrac{\partial\phi}{\partial\nu}+B\phi=-\eta^* B_t\psi&\hbox{on} & \varLambda_T,\\
    		\noalign{\smallskip}\dis
    		y(T,\cdot\,)=0,\quad \phi(T,\cdot\,)=0 &\hbox{in} & \mathcal{O},
    \end{array}
    \right.
 \end{equation}
where
\begin{equation*}\label{I31}
		G_1=-\eta^*_t\varphi_t+\eta^*(a_t\cdot\nabla)\varphi+\eta^*D\varphi b_t+\eta^*c_t\nabla\psi
\end{equation*}
and
\begin{equation*}\label{I32}
		G_2=-\eta^*_t\psi_t+\eta^*(a+b)_t\cdot\nabla\psi.
\end{equation*}

To see that  $(y,q,\phi)$ solves \eqref{I30}, one can take a sequence   of regular  functions $(a^k,b^k,c^k)$ such that
\begin{equation*}\label{I33}
		(a^k,b^k,c^k)\longrightarrow (a,b,c) \quad \hbox{weakly star in} \quad L^\infty (0,T;L^\infty(\Oo)^n)
\end{equation*}
and 
\begin{equation*}\label{I34}
		(a^k_t,b^k_t,c^k_t)\longrightarrow (a_t,b_t,c_t) \quad \hbox{weakly in} \quad L^2(0,T;L^2(\mathcal{O})^n).
\end{equation*}
Since  there exists a unique solution $(y^k,q^k,\phi^k)$ to \eqref{I30} with $(a,b,c)$ replaced by $(a^k,b^k,c^k)$, one can take  limits and conclude that $(y,q,\phi)$ solves \eqref{I30}.

Next, using the fact that  $\varphi \in L^2(0,T;H^1(\Oo)^n) \cap H^1(0,T;H^{-1}(\Oo)^n)$, $\psi \in L^2(0,T;H^1(\Oo)) \cap H^1(0,T;H^{-1}(\Oo) )$ and the hypothesis on $a$, $b$, $c$, $A$ and $B$,  we see that $G_1\in L^2(0,T;H^{-1}(\mathcal{O})^n)$, $G_2\in L^2(0,T;H^{-1}(\mathcal{O}))$, $\eta^*  A_t\varphi \in L^2(0,T;H^{-1/2}(\mathcal{\partial O})^n)$ and 
$\eta^*  B_t\psi \in L^2(0,T;H^{-1/2}(\mathcal{\partial O}))$. Moreover,  the following estimate holds
\begin{align}
		&\|y\|^2_{L^2(H^1)}+\|\phi\|^2_{L^2(H^1)}\notag\\
		&\qquad\leq Ce^{CT(\|a\|^2_\infty+\|b\|^2_\infty+\|c\|^2_\infty+\|A\|^2_\infty)}
		 \left( \|(G_1,G_2)\|^2_{L^2(H^{-1})} +\|\eta^*(A_t\varphi, B_t\psi)\|^2_{L^2(H^{-1/2})}\right).\label{I35}
\end{align}

Notice that this is still not enough to absorb the local term on $\varphi_{tt}$ in \eqref{28}. Thus, we must show that $y$ is actually  a strong solution of 
	$\eqref{I30}_{1,3,4,6}$,  which will be true if we prove that  $G_1 \in L^2(\Oo_T)^n$ and $\eta^* A_t \varphi \in  H^{(1-l)/2}(0,T;H^{(l-1/2)}(\partial \mathcal{O})^n)\cap L^2(0,T;H^{1/2}(\partial\mathcal{O})^{N})$.

To see that $G_1 \in L^2(\Oo_T)^n$,  we must verify that $\eta^*(a_t\cdot\nabla)\varphi$, $\eta^*D\varphi b_t$ and $\eta^*c_t\nabla\psi$ belong to $L^2(\Oo_T)^n$. In fact, since $y \in L^2(0,T;H^1(\mathcal{O})^n),$ we have that $\eta^*\nabla\varphi \in H^1(0,T;L^2(\mathcal{O})^{n\times n})$ and, using that $\eta^*\nabla \varphi\in L^2(0,T;H^1(\mathcal{O})^{n\times n})$ and \cite[Theorem II.5.14]{boyer}, we conclude that 
 \begin{equation*}\label{fort1}
\eta^*\nabla\varphi \in C^0([0,T]; H^{1/2}(\mathcal{O})^{n\times n}).
\end{equation*}
Analogously, we have that
\begin{equation*}\label{fort2}
\eta^*\nabla\psi\in C^0([0,T]; H^{1/2}(\mathcal{O})^{n}).
\end{equation*}
Hence, from the assumptions on  $a_t$, $b_t$ and $c_t$, we readily see that $\eta^*(a_t\cdot\nabla)\varphi\in L^2(\Oo_T)^n$, $\eta^*D\varphi b_t\in L^2(\Oo_T)^n$ and  $\eta^*c_t\nabla\psi\in L^2(\Oo_T)^n$.  Moreover, the following estimate holds 
\begin{equation*}\label{I36}
	\begin{array}{ccc}
		&&\|\eta^*(a_t\cdot\nabla)\varphi\|^2_{L^2(\Oo_T)^n}+\|\eta^*D\varphi b_t\|^2_{L^2(\Oo_T)^n}   + \|\eta^*c_t\nabla\psi\|^2_{L^2(\Oo_T)^n}\\
			&&\leq C\left(\|a_t\|^2_{L^2(L^r)}+\|b_t\|^2_{L^2(L^r)}+\|c_t\|^2_{L^2(L^r)}\right)
		\\
		\noalign{\smallskip}\dis
		&&
		\times \bigg(\|\eta^*\varphi\|^2_{L^2(H^2)}+\|\eta^*_t\varphi\|^2_{L^2(H^1)}+\|y\|^2_{L^2(H^1)}
		\\
		\noalign{\smallskip}\dis		
		&&
		+\|\eta^*\psi\|^2_{L^2(H^2)}+\|\eta^*_t\psi\|^2_{L^2(H^1)}+\|\phi\|^2_{L^2(H^1)}\bigg).
	\end{array}
\end{equation*}


Let us now prove that $\eta^* A_t \varphi \in H^{(1-l)/2}(0,T;H^{(l-1/2)}(\partial \mathcal{O})^n)\cap L^2(0,T;H^{1/2}(\partial\mathcal{O})^n)$.  Indeed,  from estimate \eqref{I35} we see that  $\eta^*\varphi \in H^1(0, T ; H^{1/2}(\partial\mathcal{O})^n)$ and, together with assumption \eqref{hipo_A_3} on $A$, we obtain
\begin{equation*}
\eta^*A_t\varphi\in 
H^{(1-l)/2}(0,T;H^{\vartheta_2}(\partial\mathcal{O})^n)\subset 
H^{(1-l)/2}(0,T;H^{l-1/2}(\partial\mathcal{O})^n),
\end{equation*}
with the following estimate
\begin{equation*}
	\|\eta^*(\partial_t A)\varphi\|^2_{H^{(1-l)/2}(H^{l-1/2})}\leq C\|A\|^2_{H^{(3-l)/2}(H^{\vartheta_2})}\left(       \|\eta^*_t\varphi\|^2_{L^2(H^1)}+\|y\|^2_{L^2(H^1)}+\|\eta^*\varphi\|^2_{L^2(H^1)}\right).
\end{equation*}
	Also, since $\eta^*\varphi\in  L^2(0, T ; H^2(\mathcal{O})^n )\cap H^1(0, T ; H^1(\mathcal{O})^n )$, we have that $\eta^*\varphi \in H^{1/4}(0, T ; H^{5/4}(\partial \mathcal{O})^n )$, which gives $\eta^* A_t\varphi\in L^2(0,T;H^{1/2}(\partial \mathcal{O})^n)$, because $A_t \in H^{(1-l)/2}(0,T;H^{\vartheta_2}(\partial \mathcal{O}^{n\times n}))$. Moreover, 
 \begin{equation*}
\|\eta^* A_t\varphi\|_{L^2(H^{1/2})}^2\leq C\|A\|^2_{H^{(3-l)/2}(H^{\vartheta_2})}\left(\|\eta^*\varphi\|^2_{L^2(H^2)}+\|\eta^*_t\varphi\|^2_{L^2(H^1)}+\|y\|^2_{L^2(H^1)}\right).
 \end{equation*}	

Thus, we have proved that $y$ is   a strong solution of 
	$\eqref{I30}_{1,3,4,6}$. Recalling  \cite[Proposition $1.1$]{guerrero_CEL_NS}, we deduce  in particular that   $y_t\in L^2(\Oo_T)$ and 
\begin{align*}
\|y_t\|^2_{L^2(\Oo_T)}&\leq C  e^{CT\|A\|^2_P}(1+\|A\|^4_P)\Big(\|G_1\|^2_{L^2(\Oo_T)^n}+\|(a\cdot\nabla)y\|^2_{L^2(\Oo_T)^n}\\
	& +\|Dyb\|^2_{L^2(\Oo_T)^n}+\|c\nabla \phi\|^2_{L^2(\Oo_T)^n}+\|\eta^*A_t\varphi\|^2_{L^2(H^{1/2})}+\|\eta^*A_t\varphi\|^2_{H^{(1-l)/2}(H^{l-1/2})}\Big)\\
&\leq C e^{CT\|A\|^2_P}(1+\|A\|^4_P)(1+\|A\|^2_{P})\bigg[\left(1+\|a_t\|^2_{L^2(L^r)}+\|b_t\|^2_{L^2(L^r)}+\|c_t\|^2_{L^2(L^r)}\right)\\
		&\times\left(\|\eta^*_t\varphi_t\|^2_{L^2(\Oo_T)^n}+\|\eta^*\varphi\|^2_{L^2(H^2)}+\|\eta^*_t\varphi\|^2_{L^2(H^1)}+\|y\|^2_{L^2(H^1)}+\|\eta^*\psi\|^2_{L^2(H^2)}\right.\\
		&\left.+\|\eta^*_t\psi\|^2_{L^2(H^1)}+\|\phi\|^2_{L^2(H^1)}\right)+\left(\|a\|^2_\infty+\|b\|^2_\infty+\|c\|^2_\infty\right)\left(\|y\|^2_{L^2(H^1)}+\|\phi\|^2_{L^2(H^1)}\right)\\
		&+\left(\|\eta^*\varphi\|^2_{L^2(H^2)}+\|\eta^*_t\varphi\|^2_{L^2(H^1)}+\|y\|^2_{L^2(H^1)}\right)    \bigg].	
\label{I37}
 \end{align*}

	Taking now $\lambda \geq \widehat{\lambda}e^{\widehat{\lambda}T(\|a\|^2_\infty+\|b\|^2_\infty+\|A\|^2_P+\|B\|^2_Q)}(1+\|A\|^{2}_{P})(1+\|A\|^5_P+\|B\|^5_Q)(1+\|a\|^{2}_\infty+\|b\|^{2}_\infty+  \|c\|^{2}_\infty+    \|a_t\|^{2}_{L^2(L^r)}+\|b_t\|^{2}_{L^2(L^r)}+\|c_t\|^{2}_{L^2(L^r)})(1+\|c\|^2_\infty)$ and $s\geq \widehat{s}e^{8\lambda \|\eta^0\|\infty}(T^4+T^8)$, from \eqref{I35} we obtain 
\begin{equation*}
		\begin{array}{ccc}
		\|\eta^*\varphi_{tt}\|^2_{L^2(\Oo_T)}
		&\leq&C \lambda^2 \left(\|\eta^*_t\varphi_t\|^2_{L^2(\Oo_T)^n}+\|\eta^*_t\psi_t\|^2_{L^2(\Oo_T)}+\|\eta^*\varphi\|^2_{L^2(H^2)}+\|\eta^*_t\varphi\|^2_{L^2(H^1)}\right.\\
		\noalign{\smallskip}\dis
		&&\left.+\|\eta^*\psi\|^2_{L^2(H^2)} +\|\eta^*_t\psi\|^2_{L^2(H^1)}+\|y\|^2_{L^2(\Oo_T)}+\|\phi\|^2_{L^2(\Oo_T)}\right).
	\end{array}
 \end{equation*}
Since  $|\eta^*_t|\leq \varepsilon\lambda^{-1}s^{-1/2}\hat{\xi}^{-1/2}e^{-s\alpha^*}$,  for $\varepsilon$ sufficiently small, the following is found:
 \begin{equation}\label{I39}
		\begin{array}{lll}
		\|\eta^*\varphi_{tt}\|^2_{L^2(\Oo_T)}
		&\leq&\dis C\varepsilon\biggl(s^{-1}\iint_{\Oo_T} e^{-2s\alpha^*}\hat{\xi}^{-1}(|\varphi_t|^2+|\psi_t|^2) \,  \\
			\noalign{\smallskip}\dis
		&&\dis+s^{-1}\iint_{\Oo_T} e^{-2s\alpha^*}\hat{\xi}^{-1}(|\nabla \varphi|^2+|\nabla\psi|^2) \,  \biggr)\\
		\noalign{\smallskip}\dis
		&&+C\lambda^2\left(\|\eta^*\varphi\|^2_{L^2(H^2)}+\|\eta^*\psi\|^2_{L^2(H^2)}\right).
	\end{array}
 \end{equation}
 
 We need to estimate the terms $\|\eta^*\varphi\|^2_{L^2(H^2)}$ and $\|\eta^*\psi\|^2_{L^2(H^2)}$. Thus, let us set $(\hat{\varphi},\hat{\pi},\hat{\psi}):=\eta^*(\varphi,\pi,\psi)$. One has:
\begin{equation*}
	\left\{
   \begin{array}{lll}
    		-\partial_t \hat{\varphi}-\Delta \hat{\varphi}-(a\cdot\nabla)\hat{\varphi}-D\hat{\varphi} b+\nabla \hat{\pi} = \eta^* c\nabla \psi -\eta^*_t\varphi&\hbox{in} & \mathcal{O}_T,\\
		\noalign{\smallskip}\dis
		- \partial_t \hat{\psi}-\Delta \hat{\psi}-(a+b)\cdot\nabla\hat{\psi} = \eta^*\varphi\cdot e_n-\eta^*_t\psi&\hbox{in} & \mathcal{O}_T,\\
    		\noalign{\smallskip}\dis
    		\nabla\cdot \hat{\varphi}=0 &\hbox{in} &\mathcal{O}_T,\\
    		\noalign{\smallskip}\dis
    		\hat{\varphi}\cdot \nu=0, \quad  [D(\hat{\varphi})\nu+A\hat{\varphi}]_{tan}=0&\hbox{on} & \varLambda_T,\\
		\noalign{\smallskip}\dis
    		\dfrac{\partial\hat{\psi}}{\partial \nu}+B\hat{\psi}=0&\hbox{on} & \varLambda_T,\\
    		\noalign{\smallskip}\dis
    		\hat{\varphi}(T,\cdot\,)=0, \quad \hat{\psi}(T,\cdot\,)=0&\hbox{in} &\mathcal{O}.
    \end{array}
    \right.
 \end{equation*}
 Again, from energy estimates  \cite[Proposition $1.1$]{guerrero_CEL_NS}, we find that
 \begin{equation*}
		\displaystyle\|\hat{\varphi}\|_{L^2(H^2)}^2\leq  \displaystyle Ce^{CT(\|a\|^2_\infty+\|b\|^2_\infty+\|A\|_{P}^2)}(1+\|A\|^4_P)(1+\|a\|^2_\infty+\|b\|^2_\infty) \left(\|\eta^*_t \varphi \|^2+\|c\|^2_\infty \|\eta^*\nabla\psi\|^2 \right),
 \end{equation*}
 and, from maximal $L^2$-regularity estimates for the heat equation with homogeneous Robin boundary conditions (similar arguments as in \cite[Proposition $2$]{C-G-B-P-2}), we deduce that
\begin{equation*}
	{\|\hat{\psi}\|^2_{L^2(H^2)} \leq C e^{CT(\|a\|^2_\infty+\|b\|^2_\infty+\|B\|_Q^2)}(1+\|B\|^4_Q)(1+\|a\|^2_\infty+\|b\|^2_\infty)\left(\|\eta^*\varphi\|^2+\|\eta^*_t\psi\|^2\right)}.
 \end{equation*}
 
 Adding the last two inequalities, we have:
\begin{equation*}\label{eq:h2}
	\begin{array}{lll}
		\displaystyle\|\hat{\varphi}\|_{L^2(H^2)}^2+\|\hat{\psi}\|^2_{L^2(H^2)}&\leq& \lambda\left(\|\eta^*_t \varphi \|^2+\|\eta^*_t\psi\|^2+\|\eta^*\nabla\psi\|^2 +\|\eta^*\varphi\|^2\right)\\
		\noalign{\smallskip}\dis
		&\leq & \displaystyle \lambda  \biggl(\varepsilon s^{-1}\lambda^{-2}\iint_{\Oo_T} e^{-2s\alpha^*}\hat{\xi}^{-1}(|\varphi|^2+|\psi|^2) \,  \\
		\noalign{\smallskip}\dis
		&&\dis+ s^{-7/2}\lambda^{-2}\iint_{\Oo_T}e^{-2s\alpha^*}\hat{\xi}^{-7/2}|\nabla\psi|^2\,   \biggr).
	\end{array}
 \end{equation*}

From this last estimate, \eqref{I39} and \eqref{28}, we see that

\begin{equation}\label{eq:tempo}
\begin{array}{lll}
    		\dis s^2\lambda^3\iint_{(0,T)\times\omega^\prime} e^{-2s\hat{\alpha}}\hat{\xi}^2|\varphi_t|^2\,  &\leq&
    		C\displaystyle s^{15/2}\lambda^8\displaystyle\iint_{(0,T)\times\omega^\prime} e^{-4s\hat{\alpha}+2s\alpha^*}\hat{\xi}^{15/2}|\varphi|^2\,  \\
    		\noalign{\smallskip}\dis
		&&\dis +\varepsilon I(s,\lambda;\varphi)+\varepsilon I(s,\lambda;\psi),
 \end{array}
 \end{equation}
 for  $\lambda \geq \widehat{\lambda}e^{\widehat{\lambda}T(\|a\|^2_\infty+\|b\|^2_\infty+\|A\|^2_P+\|B\|^2_Q)}(1+\|A\|^{2}_{P})(1+\|A\|^5_P)(1+\|B\|^5_Q)(1+\|a\|^{2}_\infty+\|b\|^{2}_\infty+  \|c\|^{2}_\infty+    \|a_t\|^{2}_{L^2(L^r)}+\|b_t\|^{2}_{L^2(L^r)}+\|c_t\|^{2}_{L^2(L^r)}+\|B\|^2_\infty)(1+\|c\|^2_\infty)$ and $s\geq \widehat{s}e^{8\lambda \|\eta^0\|\infty}(T^4+T^8)$.
%
%
%
\\

\noindent \textbf{Step 7: Arrangements.}\\
Combining \eqref{I9}, \eqref{eq:local_laplacian} and \eqref{eq:tempo}, it follows that 
\begin{equation}\label{I40}
    \begin{array}{lll}
    		\dis I(s,\lambda; \psi)+\dis I(s,\lambda; \varphi)&\leq &C(1+T^2)\biggl(s^{15/2}\lambda^8\displaystyle\iint_{(0,T)\times\omega_0} \!\!\!\!\! \!\!\!\!\! e^{-4s\hat{\alpha}+2s\alpha^*}\hat{\xi}^{15/2}|\varphi|^2\,  \\
    		\noalign{\smallskip}\dis
    		&&\displaystyle +s^3\lambda^4\displaystyle\iint_{(0,T)\times\omega_0} \!\!\!\!\!\! \!\!\!\!\! e^{-2s\alpha}\xi^3|\psi|^2\,  \\
		\noalign{\smallskip}\dis
		&&\dis+s^2\lambda^4\iint_{(0,T)\times\omega_0} \!\!\!\!\!\!\!\!\!\!\!\!\! e^{-2s\hat{\alpha}}\hat{\xi}^2(|\nabla\varphi|^2+|\nabla\psi|^2)\,  \biggr),
    \end{array}
 \end{equation}
for $\lambda \geq \widehat{\lambda}e^{\widehat{\lambda}T(\|a\|^2_\infty+\|b\|^2_\infty+\|A\|^2_P+\|B\|^2_Q)}(1+\|A\|^{2}_{P})(1+\|A\|^5_P)(1+\|B\|^5_Q)(1+\|a\|^{2}_\infty+\|b\|^{2}_\infty+  \|c\|^{2}_\infty+    \|a_t\|^{2}_{L^2(L^r)}+\|b_t\|^{2}_{L^2(L^r)}+\|c_t\|^{2}_{L^2(L^r)}+\|B\|^2_\infty)(1+\|c\|^2_\infty)$ and $s\geq \widehat{s}e^{8\lambda \|\eta^0\|\infty}(T^4+T^8)$.\\

\noindent \textbf{Step 8: Estimates of the local gradient terms.}\\
Let us consider a cut-off function $\rho \in C^1(\overline{\omega}_c)$ with $\rho = 1$ in $ \omega_0$, $\hbox{supp} \ \rho\subset\subset \omega_c$. Then,
\begin{equation*}
\begin{array}{lll}
    		\dis s^2\lambda^4\iint_{(0,T)\times\omega_0} \!\!\!\!\!\!\!\!\!\!\!\!\! e^{-2s\hat{\alpha}}\hat{\xi}^2|\nabla\varphi|^2\,   
		&\leq &\dis s^2\lambda^4\iint_{\Oo_T} \!\! e^{-2s\hat{\alpha}}\hat{\xi}^2\rho|\nabla\varphi|^2\,  .
 \end{array}
 \end{equation*}
 	After  integration by parts, thanks to H\"older and Young inequalities, we deduce that
\begin{equation*}
\begin{alignedat}{2}
    		\dis s^2\lambda^4\iint_{(0,T)\times\omega_0}  e^{-2s\hat{\alpha}}\hat{\xi}^2|\nabla\varphi|^2\,   
		 \leq&~		\varepsilon s^{-1}\iint_{\Oo_T}e^{-2s\alpha^*}\hat{\xi}^{-1}(|\Delta \varphi|^2   +|\nabla \varphi|^2)\,  \\
    		&+Cs^5\lambda^8\iint_{(0,T)\times\omega_c} e^{-4s\hat{\alpha}+2s\alpha^*}\hat{\xi}^{5}|\varphi|^2\,  ,
 \end{alignedat}
 \end{equation*}
 	where $\varepsilon$ is a small enough constant. 
 
 	Similar computations yield
 \begin{equation*}
\begin{alignedat}{2}
    		\dis s^2\lambda^4\iint_{(0,T)\times\omega_0}  e^{-2s\hat{\alpha}}\hat{\xi}^2|\nabla\psi|^2\,   
		 \leq&~		\varepsilon s^{-1}\iint_{\Oo_T}e^{-2s\alpha^*}\hat{\xi}^{-1}(|\Delta \psi|^2   +|\nabla \psi|^2)\,  \\
    		&+Cs^5\lambda^8\iint_{(0,T)\times\omega_c} e^{-4s\hat{\alpha}+2s\alpha^*}\hat{\xi}^{5}|\psi|^2\,  .
	 \end{alignedat}
 \end{equation*}
 
	Then, using \eqref{I40}, and these inequalities, we obtain
 \begin{equation*}
 \begin{alignedat}{2}
    		\dis I(s,\lambda; \psi)+\dis I(s,\lambda; \varphi)\leq&~ C(1+T^2)\biggl(s^{15/2}\lambda^8\displaystyle\iint_{(0,T)\times\omega_c} e^{-4s\hat{\alpha}+2s\alpha^*}\hat{\xi}^{15/2}|\varphi|^2\,  \\
    		&\displaystyle+s^5\lambda^8\iint_{(0,T)\times\omega_c}e^{-4s\hat{\alpha}+2s\alpha^*}\hat{\xi}^{5}|\varphi|^2\,  \\
		\noalign{\smallskip}\dis
    		&\dis +s^3\lambda^4\displaystyle\iint_{(0,T)\times\omega_c} e^{-2s\alpha}\xi^3|\psi|^2\,  \\
    		\noalign{\smallskip}\dis
    		&\dis + s^5\lambda^8\iint_{(0,T)\times\omega_c}e^{-4s\hat{\alpha}+2s\alpha^*}\hat{\xi}^{5}|\psi|^2\,  \biggr)\\
    		\noalign{\smallskip}\dis
    		&+\varepsilon I(s,\lambda;\varphi)+\varepsilon I(s,\lambda;\psi),
 \end{alignedat}
 \end{equation*}
 	for $\lambda \geq \widehat{\lambda}e^{\widehat{\lambda}T(\|a\|^2_\infty+\|b\|^2_\infty+\|A\|^2_P+\|B\|^2_Q)}(1+\|A\|^{2}_{P})(1+\|A\|^5_P)(1+\|B\|^5_Q)(1+\|a\|^{2}_\infty+\|b\|^{2}_\infty+  \|c\|^{2}_\infty+    \|a_t\|^{2}_{L^2(L^r)}+\|b_t\|^{2}_{L^2(L^r)}+\|c_t\|^{2}_{L^2(L^r)}+\|B\|^2_\infty)(1+\|c\|^2_\infty)$ and $s\geq \widehat{s}e^{8\lambda \|\eta^0\|\infty}(T^4+T^8)$.
	Finally, we easily obtain the desired Carleman estimate \eqref{carleman_local} by taking $\varepsilon$ sufficiently small. 
	
	This concludes the proof.

\end{appendices}

\addcontentsline{toc}{section}{References}


\begin{thebibliography}{10}


%


\bibitem{amrouche} {\sc C. Amrouche and A. Rejaiba}, 
{\it $L^p$-theory for Stokes and Navier-Stokes equations with Navier boundary condition}. 
J. Differential Equations, {\bf 256} (2014), no. 4, 1515--1547.



\bibitem{boyer} {\sc F. Boyer, P. Fabrie}, 
{\it Mathematical Tools for the Study of the Incompressible Navier-Stokes Equations and Related Models}. 
Applied Mathematical Sciences 183. New York: Springer. (2013) pp. 102-106. ISBN 978-1-4614-5975-0.


\bibitem{carreno} {\sc N. Carre\~{n}o}, 
{\it Local controllability of the $N-$dimensional Boussinesq system with $N-1$ scalar controls in an arbitrary control domain}, 
Mathematical Control and Related Fields  {\bf 2} (2012), no. 4, 361--382.

\bibitem{clopeau} {\sc T. Clopeau, A. Mikeli\'{c}, R. Robert}, 
{\it On the vanishing viscosity limit for the 2D incompressible Navier-Stokes equations with the friction type boundary conditions}, 
Nonlinearity  {\bf 11} (1998), no. 6, 1625--1636.

\bibitem{coron_1992} {\sc J. -M Coron}, 
{\it Global asymptotic stabilization for controllable systems
              without drift}, 
Math. Control Signals Systems (1992), 295--312.

\bibitem{Coron2}{\sc J. -M Coron},
{\em Contr\^olabilit\'e exacte fronti\`ere de l'\'equation
  d'{E}uler des fluides parfaits incompressibles bidimensionnels}, C. R. Acad.
  Sci. Paris S\'er. I Math., 317 (1993), pp.~271--276.

\bibitem{Coron3}{\sc J. -M Coron},
{\em On the controllability of {$2$}-{D} incompressible
  perfect fluids}, J. Math. Pures Appl. (9), 75 (1996), pp.~155--188.
  
\bibitem{Coron_2D} {\sc J. -M Coron},
{\it On the controllability of the $2-D$ incompressible Navier-Stokes equations with the Navier slip boundary conditions}, 
ESAIM:  COCV {\bf 1} (1996), 35--75.

\bibitem{coron_marbach} {\sc J. -M Coron, F. Marbach, F. Sueur}, 
{\it Small-time global exact controllability of the Navier-Stokes equation with Navier slip-friction boundary conditions}.
J. European Mathematical Society, Electronically published on February 11, 2020. doi: 10.4171/JEMS/952.

\bibitem{Cara_NS} {\sc E. Fern\'andez-Cara, S. Guerrero, O.Y. Imanuvilov, J.-P. Puel},
{\it Local exact controllability of the Navier-Stokes system}, 
J. Math. Pures Appl. (9) {\bf 83} (2004), 1501--1542.

\bibitem{Cara_NS_BOU} {\sc E. Fern\'andez-Cara, S. Guerrero, O.Y. Imanuvilov, J.-P. Puel},
{\it On the controllability of N-dimensional Navier-Stokes and Boussinesq systems with N-1 scalar controls}, 
C. R. Acad. Sci. Paris, Serie I t. {\bf 340} (2005), no. 4, 275--280.

\bibitem{Cara_NS_BOU6} {\sc E. Fern\'andez-Cara, S. Guerrero, O.Y. Imanuvilov, J.-P. Puel},
{\it Some controllability results for the N-dimensional Navier-Stokes and Boussinesq systems with N-1 scalar controls}, 
SIAM J. Control Optimiz. {\bf 45} (2006), no. 1, 146--173.

\bibitem{C-G-B-P-1} {\sc E. Fern\'{a}ndez-Cara, M. Gonz\'{a}lez-Burgos, S. Guerrero, J.-P. Puel}, Null controllability of the heat equation with boundary Fourier conditions: the linear case, ESAIM Control, Optimization and Calculus of Variations, 12 (3)(2006), 442--465.

\bibitem{C-G-B-P-2} {\sc E. Fern\'{a}ndez-Cara, M. Gonz\'{a}lez-Burgos, S. Guerrero, J.-P. Puel}, Null controllability of the heat equation with boundary Fourier conditions: the linear case, ESAIM Control, Optimization and Calculus of Variations, {\bf 12} (3)(2006), 466--483.

\bibitem{bouss_cara} {\sc E. Fern\'{a}ndez-Cara, M. C. Santos, D. A. Souza}, Boundary controllability of incompressible Euler fluids with Boussinesq heat effects,
 Math. Control Signals Syst. {\bf 28}, 7 (2016). 

\bibitem{Fursikov_Imanuvilov}  {\sc A. Fursikov, O.Yu. Imanuvilov}, 
{\it Controllability of Evolution Equations}, 
Lecture Notes \#34, Seoul National University,
Korea (1996).

\bibitem{Fursikov_Imanuvilov_Local_Bou}  {\sc A. Fursikov, O.Yu. Imanuvilov}, 
{\it Local exact boundary controllability of the Boussinesq equation}, 
SIAM J. Control Optim. {\bf 32} (1998), no. 2, 391-421.

\bibitem{Fursikov_Imanuvilov_Exac_Bou}  {\sc A. Fursikov, O.Yu. Imanuvilov}, 
{\it Exact controllability of the Navier-Stokes and Boussinesq equations}, 
Uspekhi Mat. Nauk, {\bf 54}:3(327) (1999), 93--146; Russian Math. Surveys {\bf 54} (1999), no. 3, 565--618. 


\bibitem{Glass1}
{\sc O.~Glass}, {\it Contr\^olabilit\'e exacte fronti\`ere de l'\'equation
  d'{E}uler des fluides parfaits incompressibles en dimension 3}, C. R. Acad.
  Sci. Paris S\'er. I Math., 325 (1997), pp.~987--992.

\bibitem{Glass2}
\leavevmode\vrule height 2pt depth -1.6pt width 23pt, {\it Contr\^olabilit\'e
  de l'\'equation d'{E}uler tridimensionnelle pour les fluides parfaits
  incompressibles}, in S\'eminaire sur les \'{E}quations aux {D}\'eriv\'ees
  {P}artielles, 1997--1998, \'Ecole Polytech., Palaiseau, 1998, pp.~Exp. No.
  XV, 11.

\bibitem{glass_3D}\leavevmode\vrule height 2pt depth -1.6pt width 23pt, 
{\it Exact boundary controllability of 3-{D} {E}uler equation}, 
ESAIM Control Optim. Calc. Var. (2000), no. 5, 1--44.


\bibitem{Burgos} {\sc M. Gonz\'alez-Burgos, S. Guerrero, J.-P. Puel}, 
{\it Local exact controllability to the trajectories of the Boussinesq system via a fictitious control on the divergence equation}, 
Communic. Pure and Appl. Analysis {\bf 8} (2009), no. 1, 311--332.

\bibitem{guerrero_CEL_NS} {\sc S. Guerrero}, 
{\it Local exact controllability to the trajectories of the Navier-Stokes system with nonlinear Navier-slip boundary conditions}, 
ESAIM: COCV {\bf 12} (2006), no. 3, 484--544.

\bibitem{guerrero_bou_D} {\sc S. Guerrero}, 
{\it Local exact controllability to the trajectories of the Boussinesq system}, 
Ann. l. H. Poincar\'e - AN {\bf 23} (2006), no. 1, 29--61.

\bibitem{guerrero_motoya} {\sc S. Guerrero, C. Montoya}, 
{\it Local null controllability of the $N-$dimensional Navier–Stokes system with nonlinear Navier-slip boundary conditions and $N -1$ scalar controls}, 
J. Math. Pures Appl. (9) {\bf 113} (2018), 37--69.

\bibitem{iftimie}  {\sc D. Iftimie, G. Planas}, 
{\it Inviscid limits for the Navier-Stokes equations with Navier friction boundary conditions}, 
Nonlinearity {\bf 19} (2006), no. 4, 899--918.

\bibitem{iftimie_Sueur}  {\sc D. Iftimie, F. Sueur}, 
{\it Viscous boundary layers for the Navier-Stokes equations with the Navier slip conditions}, 
Arch. Ration. Mech. Anal. {\bf 199} (2011), no. 1, 145--175.

\bibitem{Bou_N_1998}  {\sc O. Y. Imanuvilov}, 
{\it Local exact controllability for the 2-D Boussinesq equations with the Navier slip boundary conditions}, 
ESAIM: Proc., {\bf 4} (1998), 153--170. 

\bibitem{Imanuvilov_Puel} {\sc O.Yu. Imanuvilov, J. P. Puel},
 {\it Global Carleman estimates for weak elliptic nonhomogeneous Dirichlet problems},
 Comptes Rendus Mathematique {\bf 335} (2002), no. 1, 33--38.


\bibitem{Kelliher} {\sc J. Kelliher}, 
{\it Navier-Stokes equations with Navier boundary conditions for a bounded domain in the plane}, 
SIAM J. Math. Anal.  {\bf 38} (2006), no. 1, 210--232.

\bibitem{kim} {\sc T. Kim, D. Cao}, 
{\it Local exact controllability of the Boussinesq with Boundary conditions on the pressure}, 
Handbook on Navier-stokes Equations: Theory and Applied Analysis.


 \bibitem{LERAY2} {\sc J. Leray}, 
{\it Essai sur les mouvements plans d'un liquide visqueux que limitent des
parois}. 
 J. Math. Pures Appl. {\bf 13} (1934), 331--418.
 
 \bibitem{marbach_burgers} {\sc F. Marbach}, {\it Small time global null controllability for a viscous Burgers' equation despite the presence of a boundary layer}, J. Math. Pures Appl. (9) 102 (2014), no. 2, 364--384.
 
 \bibitem{Navier1827} {\sc C. L. M. H. Navier},
 {\it Sur les lois d\ `\'equilibre et du mouvement des corps \'elastiques},
 M\'em. Acad. Sci {\bf 7} (1827), no. 2, 375--394.

\bibitem{shimada}  {\sc R. Shimada}, 
{\it On the $L_p-L_q$ maximal regularity for Stokes equations with Robin boundary condition in a bounded domain}, 
Math. Methods Appl. Sci., {\bf 30} (2007), no. 3, 257--289.

\bibitem{shibata1} {\sc Y. Shibata, R. Shimada}, 
{\it On a generalized resolvent estimate for the Stokes system with Robin boundary condition}, 
J. Math. Soc. Japan {\bf 59} (2007), no. 2, 469--519.

\bibitem{shibata2}  {\sc Y. Shibata, R. Shimada}, 
{\it On the Stokes equation with Robin boundary condition. In Asymptotic analysis and singularities - hyperbolic and dispersive PDEs and fluid mechanics}, 
volume 47 of Adv. Stud, Pure Math, pages 341-248. Math. Soc. Japan, Tokyo, 2007.

\bibitem{teman} {\sc R. Teman},
 {\it Navier-Stokes Equations: theory and numerical analysis},
 North-Holland Publishing Company
Amsterdam-New York-Oxford (1977), ISBN: 07204-2840-8.


\end{thebibliography}
\end{document}